\newtheorem{theorem}{Theorem}[section]
\newtheorem{lemma}[theorem]{Lemma}
\theoremstyle{definition}
\newtheorem{definition}[theorem]{Definition}
\newtheorem{proposition}[theorem]{Proposition}
\theoremstyle{remark}
\newtheorem{remark}[theorem]{Remark}
\numberwithin{equation}{section}
\begin{document}

\title[Bayesian inverse problems]
 {Bayesian approach to inverse problems for functions with variable index Besov prior}

\author[J.X.Jia]{Junxiong Jia}
\address{Department of Mathematics,
Xi'an Jiaotong University,
 Xi'an
710049, China;
Beijing Center for Mathematics and Information Interdisciplinary Sciences (BCMIIS)}
\email{jjx323@mail.xjtu.edu.cn}
\thanks{}

\author[J. Peng]{Jigen Peng}
\address{Department of Mathematics,
Xi'an Jiaotong University,
 Xi'an
710049, China;
Beijing Center for Mathematics and Information Interdisciplinary Sciences (BCMIIS)}
\email{jgpeng@mail.xjtu.edu.cn}

\author[J. Gao]{Jinghuai Gao}
\address{School of Electronic and Information Engineering,
Xi'an Jiaotong University,
 Xi'an
710049, China;
Beijing Center for Mathematics and Information Interdisciplinary Sciences (BCMIIS)}
\email{jhgao@mail.xjtu.edu.cn}


\subjclass[2010]{}

\date{}

\keywords{}

\begin{abstract}
We adopt Bayesian approach to consider the inverse problem of estimate a function from noisy observations.
One important component of this approach is the prior measure.
Total variation prior has been proved with no discretization invariant property, so Besov prior has been proposed recently.
Different prior measures usually connect to different regularization terms.
Variable index TV, variable index Besov regularization terms have been proposed in image analysis,
however, there are no such prior measure in Bayesian theory. So in this paper, we propose a variable index Besov prior measure
which is a Non-Guassian measure.
Based on the variable index Besov prior measure, we build the Bayesian inverse theory.
Then applying our theory to integer and fractional order backward diffusion problems.
Although there are many researches about fractional order backward diffusion problems, we firstly apply Bayesian
inverse theory to this problem which provide an opportunity to quantify the uncertainties for this problem.
\end{abstract}

\maketitle


\section{Introduction}

Partial differential equations are powerful tools for describing physical systems. Using partial differential
equations we can predict the outcome of some measurements. The inverse problem consists of using
the actual result of some measurements to infer the values of the parameters that characterize the system.
Partial differential equations usually have a unique solution, while the inverse problem does not \cite{Tarantola}.
Because of this, we usually need to use some a priori information to compensate the data uncertainties.

Regularization techniques are useful tools to produce a reasonable estimate of quantities of interest based on the data available.
Studies about regularization techniques have a long history dating back to A. N. Tikhonov in 1963 \cite{regularize}.
When studying backward diffusion problems, the researchers found that the classical Tikhonov regularization with standard penalty terms
such as $\|\nabla u\|_{L^{2}}^{2}$ is well known to be always over smoothing.
So total variation (TV) regularization has been proposed in \cite{total_variation}.
TV penalty have gained increasing popularity for it can preserve the important details such as edges of the image.
In 1997, P. Blomgren, T. F. Chan, P. Mulet and C. K. Wong \cite{variable_total} noticed that
TV restoration typically exhibit ``blockiness'', or a ``staircasing'' effect where the restored image comprises
of piecewise flat regions. So they proposed a regularization term as follows
\begin{align*}
\int_{\Omega} |\nabla u|^{p(\nabla u)} dx,
\end{align*}
where $p$ monotonically decreasing from 2, when $|\nabla u| = 0$, to 1, as $|\nabla u|\nearrow \infty$.
Later in 2006, Y. Chen, S. Levine and M. Rao \cite{pde_variable} proposed another kind of variable index TV norm
which has many mathematical structures. In 2014, J. Tiirola \cite{decomposition_variabel} used variable index TV norm
and variable index Besov regularization terms in image decomposition problems.

However, regularization techniques can not give uncertainty analysis. Statistical inversion theory
reformulate inverse problems as problems of statistical inference by means of Bayesian statistics.
Dating back to 1970, Franklin \cite{franklin} formulating PDE inverse problems in terms of Bayes' formula on some Hilbert
space $X$. In this paper Franklin deriving a regularization using Baysian approach, it also state the
relation between regularization techniques and Bayesian approach.
Recently, S. Lasanen \cite{lasanen1,lasanen2,lasanen3,lasanen4} developed fully nonlinear theory.
S. L. Cotter, M. Dashti. J. C. Robinson, A. M. Stuart, K. J. H. Law and J. Voss \cite{inverse_fluid_equation,acta_numerica,MAP_detail}
establish a mathematical framework for a range of inverse problems for functions given noisy observations.
They establish the relation between regularization techniques and Bayesian framework, estimate the error of the
finite dimensional approximate solutions.
Based on this framework, S. L. Cotter, G. O. Roberts, A. M. Stuart and D. White \cite{mcmc_faster}
develop faster MCMC algorithms.

Now, we need to state the relation between TV regularization and Bayesian approach.
In \cite{can_total}, M. Lassas and S. Siltanen find TV regularization is not discretization invariant,
that is to say, Bayesian conditional mean estimates for total variation prior distribution are not edge-preserving with very fine
discretizations of the model space.
In order to overcome this deficiency, M. Lassas, E. Saksman and S. Siltanen \cite{besov_pp} proposed
Besov prior $B_{1,1}^{1}$ which is discretization invariant.
M. Dashti, S. Harris and A. M. Stuart \cite{Besov_prior} studied the Besov prior under the
mathematical framework established in \cite{inverse_fluid_equation}. \
Under this framework Besov prior is discretization invariant naturally for the framework is originally
build on infinite dimensional space.

Considering the Besov and TV regularization techniques and Bayes' inversion theory,
we find that there may no variable index Besov prior theory. As mentioned before
variable index TV and variable index Besov regularization terms have been used in image analysis and
achieved good performance. In this paper, we try to build variable index Besov prior
and generalize the Bayes' inversion theory with this new prior probability measure.
In section \ref{application}, we use our theory to integer order backward diffusion problems and
fractional order backward diffusion problems.

The main contributions of this paper are
\begin{enumerate}
  \item Construct a variable index Besov prior using wavelet characterization of variable index Besov space and prove a
  Fernique-like result \cite{book_SDE} for variable index Besov prior;
  \item Based on variable index Besov prior, we generalize results in \cite{acta_numerica} to build the Bayes' inversion theory.
  Under same conditions for the forward operator, we also prove the convergence of variational problems with variable index
  Besov regularization term;
  \item Although there are many studies about inverse problems for fractional diffusion equations \cite{fractional1,fractional2},
  until now, there are few studies about fractional order backward diffusion problems under Bayes' inversion framework.
  Using our theory, we proved the posterior measure exist and the continuity of the posterior measure with respect to
  the data for integer order backward diffusion problems and fractional order backward diffusion problems.
\end{enumerate}

The content of this paper are organized as follows.
In section \ref{pri}, we state some basic knowledge about variable index space and
prove wavelet characterization of variable index Besov space with periodic domain.
In section \ref{prior_section}, we construct the variable index Besov prior and proved a Fernique-like theorem.
In section \ref{bayesion_section}, we generalize Bayesian inversion theory to our variable index Besov
prior setting.
In section \ref{variational_section}, under same conditions in section \ref{bayesion_section}
for forward problem, we proved the variational problem with variable index Besov regularization term converge.
In section \ref{application}, we used our theory to integer order backward diffusion problem and
fractional order backward diffusion problem.
In the last two sections, we give some technical lemmas and for the reader's convenience we list some useful
theorems and lemmas used in our paper.


\section{Variable order space and wavelet characterization}\label{pri}

In this section, we give a short introduction to space of variable smoothness and integrability on periodic domain, then
we prove a wavelet characterization of variable index Besov space on periodic domain.

\subsection{Modular spaces}\label{modular}
\begin{definition}\label{definition_semimodular}\cite{variable_book}
Let $X$ be a vector space over $\mathbb{R}$ or $\mathbb{C}$. A function $\rho : X \rightarrow [0,\infty]$ is called a semimodular
on $X$ if the following properties hold:
\begin{enumerate}
  \item $\rho(0) = 0$.
  \item $\rho(\lambda f) = \rho(f)$ for all $f \in X$ and $|\lambda| = 1$.
  \item $\rho(\lambda f) = 0$ for all $\lambda > 0$ implies $f = 0$.
  \item $\lambda \mapsto \rho(\lambda f)$ is left continuous on $[0,\infty)$ for every $f \in X$.
\end{enumerate}
\end{definition}
A semimodular $\rho$ is called a modular if

$\,\,\,\,$(5) $\rho(f) = 0$ implies $f = 0$.

A semimodular $\rho$ is called continuous if

$\,\,\,\,$(6) for ever $f \in X$ the mapping $\lambda \mapsto \rho(\lambda f)$ is continuous on $[0,\infty)$.

A semimodular $\rho$ can be additionally qualified by the term convex. This means, as usual, that
\begin{align*}
\rho(\theta f + (1-\theta)g) \leq \theta \rho(f) + (1-\theta)\rho(g),
\end{align*}
for all $f,g \in X$.

Once we have a semimodular in place, we obtain a normed space in a standard way:
\begin{definition}\label{definition_pre_space}\cite{variable_book}
If $\rho$ is a (semi)modular on $X$, then
\begin{align*}
X_{\rho} := \{ x \in X :\, \exists \, \lambda > 0, \, \rho(\lambda x) < \infty \}
\end{align*}
is called a (semi)modular space.
\end{definition}
\begin{theorem}\label{definition_pre_norm}\cite{variable_book}
Let $\rho$ be a convex semimodular on $X$. Then $X_{\rho}$ is a normed space with the Luxemburg norm given by
\begin{align*}
\|x\|_{\rho} := \inf \left\{ \lambda > 0 :\, \rho\left( \frac{1}{\lambda}x \right) \leq 1 \right\}.
\end{align*}
\end{theorem}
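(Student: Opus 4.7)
The plan is to verify the three norm axioms for $\|\cdot\|_\rho$ on $X_\rho$, relying on a single preliminary observation: the map $\lambda \mapsto \rho(f/\lambda)$ is nonincreasing on $(0,\infty)$. This follows from convexity together with $\rho(0)=0$, since for $0 \leq t \leq 1$,
\begin{equation*}
\rho(tf) = \rho\bigl(tf + (1-t)\cdot 0\bigr) \leq t\rho(f) + (1-t)\rho(0) = t\rho(f),
\end{equation*}
and applying this to $f/\lambda'$ with $t = \lambda'/\lambda \leq 1$ gives $\rho(f/\lambda) \leq (\lambda'/\lambda)\rho(f/\lambda')$ whenever $0 < \lambda' \leq \lambda$. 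Two consequences I will use repeatedly are: (a) for any $\lambda > \|f\|_\rho$ one can pick $\lambda' \in (\|f\|_\rho, \lambda)$ from the defining set and conclude $\rho(f/\lambda) \leq \rho(f/\lambda') \leq 1$; and (b) $X_\rho = \{f \in X :\, \|f\|_\rho < \infty\}$, because the same scaling turns any $\lambda$ with $\rho(\lambda f) < \infty$ into a witness for the infimum.

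Positive definiteness and absolute homogeneity are then short. For $\|\alpha f\|_\rho = |\alpha|\,\|f\|_\rho$, use property (2) of the semimodular to rewrite $\rho(\alpha f / \lambda) = \rho(|\alpha| f / \lambda)$ and then substitute $\mu = \lambda/|\alpha|$ in the infimum, treating $\alpha = 0$ separately. For nondegeneracy, suppose $\|f\|_\rho = 0$; then there is a sequence $\lambda_n \downarrow 0$ with $\rho(f/\lambda_n) \leq 1$, and the scaling inequality above yields
\begin{equation*}
\rho(\mu f) = \rho\bigl(\mu \lambda_n \cdot (f/\lambda_n)\bigr) \leq \mu\lambda_n \, \rho(f/\lambda_n) \leq \mu\lambda_n
\end{equation*}
for every fixed $\mu > 0$ and all $n$ with $\mu\lambda_n \leq 1$. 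Letting $n \to \infty$ gives $\rho(\mu f) = 0$ for every $\mu > 0$, and then property (3) of the semimodular forces $f = 0$. This is the only place property (3) enters.

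The main step is the triangle inequality. Given $f,g \in X_\rho$ and $\lambda > \|f\|_\rho$, $\mu > \|g\|_\rho$, observation (a) gives $\rho(f/\lambda) \leq 1$ and $\rho(g/\mu) \leq 1$. Writing
\begin{equation*}
\frac{f+g}{\lambda+\mu} = \frac{\lambda}{\lambda+\mu}\cdot\frac{f}{\lambda} + \frac{\mu}{\lambda+\mu}\cdot\frac{g}{\mu}
\end{equation*}
as a convex combination and invoking convexity of $\rho$ yields $\rho\bigl((f+g)/(\lambda+\mu)\bigr) \leq 1$; hence $\|f+g\|_\rho \leq \lambda+\mu$, and taking infima over $\lambda, \mu$ closes the estimate. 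The one subtle point I would flag is the strict-versus-nonstrict issue in (a) — why $\rho(f/\lambda) \leq 1$ for every $\lambda$ strictly greater than $\|f\|_\rho$ rather than only for $\lambda$ in the defining set — and the monotonicity observation handles this without needing left continuity; left continuity (hypothesis (4)) is only what one would invoke to sharpen the bound to $\rho(f/\|f\|_\rho) \leq 1$ at the infimum itself, which is not required for the theorem as stated.
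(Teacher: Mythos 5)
Your proof is correct: the monotonicity observation, the scaling argument for definiteness via property (3), the substitution for homogeneity via property (2), and the convex-combination triangle inequality are all sound, and your remark that left continuity is not needed for the statement as given is accurate. The paper itself offers no proof — it quotes this result from the cited monograph on variable exponent spaces — and your argument is essentially the standard Luxemburg-norm proof given there, so there is nothing to reconcile.
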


\subsection{Spaces of variable integrability}

The variable exponents that we consider are always measurable function on n-dimensional torus $\mathbb{T}^{n}$ with range $[1,\infty)$.
We denote the set of such functions by $\mathcal{P}$.
We denote $p^{+} = \mathrm{esssup}_{x\in \mathbb{T}^{n}} p(x)$ and $p^{-} = \mathrm{essinf}_{x\in \mathbb{T}^{n}} p(x)$.
The function $\varphi_{p}$ is defined as follows:
\begin{align*}
\varphi_{p}(t) =
\left\{
  \begin{array}{ll}
    t^{p}, & \text{if } p \in (0,\infty), \\
    0, &  \text{if } p=\infty \text{ and }t \leq 1,  \\
    \infty, &  \text{if } p=\infty \text{ and }t>1.
  \end{array}
\right.
\end{align*}
The convention $1^{\infty} = 0$ is adopted in order that $\varphi_{p}$ is left continuous.
In what follows we write $t^{p}$ instead of $\varphi_{p}(t)$. The variable exponent modular is defined by
\begin{align*}
\rho_{L^{p(\cdot)}}(f) := \int_{\mathbb{T}^{n}} |f(x)|^{p(x)} dx.
\end{align*}
The variable exponent Lebesgue space $L^{p(\cdot)}$ and its norm $\|f\|_{p(\cdot)}$ are defined by the modular as explained in the previous subsection.

We say that $g : \mathbb{T}^{n} \rightarrow \mathbb{R}$ is locally log-H\"{o}lder continuous, abbreviated $g \in C_{loc}^{log}(\mathbb{T}^{n})$,
if there exists $c> 0$ such that
\begin{align*}
|g(x) - g(y)| \leq \frac{c}{\log(e + 1/|x-y|)}
\end{align*}
for all $x, y \in \mathbb{T}^{n}$. We say that $g$ is globally log-H\"{o}lder continuous, abbreviated $g \in C^{log}$, if it is locally
log-H\"{o}lder continuous and there exists $g_{\infty} \in \mathbb{R}$ such that
\begin{align*}
|g(x) - g_{\infty}| \leq \frac{c}{\log(e + |x|)}
\end{align*}
for all $x \in \mathbb{T}^{n}$. The notation $\mathcal{P}^{log}$ is used for those variable exponents $p \in \mathcal{P}$
with $\frac{1}{p} \in C^{log}$ that is to say $1\leq p^{-} \leq p(x) \leq p^{+} < \infty$ and $\frac{1}{p}$ is globally
log-H\"{o}lder continuous.

\subsection{Variable index Besov space}
Before we introduce variable index Besov space, we need the following definition of mixed Lebesgue-sequence space.
\begin{definition}\label{mixed_Legesgue_sequence}\cite{Almeida_Hasto}
Let $p,q \in \mathcal{P}$. The mixed Lebesgue-sequence space $\ell^{q(\cdot)}(L^{p(\cdot)})$ is defined on
sequences of $L^{p(\cdot)}$-functions by modular
\begin{align*}
\rho_{\ell^{q(\cdot)}(L^{p(\cdot)})}(\{f_{v}\}_{v})
:= \sum_{v} \inf \left\{ \lambda_{v} > 0 \, | \, \rho_{L^{p(\cdot)}}(f_{v}/\lambda^{\frac{1}{q(\cdot)}}_{v}) \leq 1 \right\}.
\end{align*}
\end{definition}

As usual, denote the Fourier transform of a distribution or a function $f$ as $\mathcal{F}(f)$ or $\hat{f}$.
Denote the inverse Fourier transform of a distribution or a function $f$ as $f^{\vee}$.
As in the constant index case, we need the following definition of admissible functions.
\begin{definition}\label{admissible_pair}\cite{Almeida_Hasto}
We say a pair $(\varphi, \Phi)$ is admissible if $\varphi, \Phi \in \mathcal{S}$ satisfy
\begin{itemize}
  \item $\mathrm{supp}\, \hat{\varphi} \subset \{ \xi \in \mathbb{T}^{n} \,:\, 1/2 \leq |\xi| \leq 2 \}$ and
  $|\hat{\varphi}(\xi)| \geq c > 0$ when $3/5 \leq |\xi| \leq 5/3$,
  \item $\mathrm{supp}\, \hat{\Phi} \subset \{ \xi \in \mathbb{T}^{n} \,:\, |\xi| \leq 2 \}$ and $|\hat{\Phi}(\xi)| \geq c > 0$
  when $|\xi| \leq 5/3$.
\end{itemize}
\end{definition}
We set $\varphi_{v}(x) := 2^{vn} \varphi(2^{v}x)$ for $v \in \mathbb{N}$ and $\varphi_{0}(x) := \Phi(x)$.
Denote $\mathcal{S}$ to be the Schwartz function space, $\mathcal{S}'$ to be the tempered distribution that is the dual space of $\mathcal{S}$.
Then the variable index Besov space in our setting can be defined as follows.
\begin{definition}\label{definition_variable_Besov}\cite{Almeida_Hasto}
Let $\varphi_{v}$ be as in Definition \ref{admissible_pair}. For $\alpha : \mathbb{T}^{n} \rightarrow \mathbb{R}$
and $p,q \in \mathcal{P}$, the variable index Besov space $B^{\alpha(\cdot)}_{p(\cdot),q(\cdot)}$ consists of all
distributions $f \in \mathcal{S}'$ such that
\begin{align*}
\|f\|_{B^{\alpha(\cdot)}_{p(\cdot),q(\cdot)}} :=
\left\| \left( 2^{v\alpha(\cdot)}\varphi_{v}*f \right)_{v} \right\|_{\ell^{q(\cdot)}(L^{p(\cdot)})} < \infty.
\end{align*}
\end{definition}
In the case of $p = q$ we use the notation $B^{\alpha(\cdot)}_{q(\cdot)} := B^{\alpha(\cdot)}_{p(\cdot),q(\cdot)}$.
To the Besov space we can also associate the following modular:
\begin{align*}
\rho_{b^{\alpha(\cdot)}_{p(\cdot),q(\cdot)}} := \rho_{\ell^{q(\cdot)}(L^{p(\cdot)})}
( ( 2^{v\alpha(\cdot)}\varphi_{v}*f )_{v} ),
\end{align*}
which can be used to define the norm.
For the reader's convenience, we also list the definition of variable index Triebel-Lizorkin space $F^{\alpha(\cdot)}_{p(\cdot),q(\cdot)}$.
\begin{definition}\label{definition_variable_Triebel}\cite{variable_tribel}
Let $\varphi_{v}$, $v \in \mathbb{N}\cup 0$, be as in Definition \ref{admissible_pair}.
The Triebel-Lizorkin space $F^{\alpha(\cdot)}_{p(\cdot),q(\cdot)}$ is defined to be the space of all distributions $f \in \mathcal{S}'$
with $\|f\|_{F^{\alpha(\cdot)}_{p(\cdot),q(\cdot)}} < \infty$, where
\begin{align*}
\|f\|_{F^{\alpha(\cdot)}_{p(\cdot),q(\cdot)}} :=
\left\| \left\| 2^{v\alpha(\cdot)}\varphi_{v}*f \right\|_{\ell^{q(\cdot)}} \right\|_{L^{p(\cdot)}}.
\end{align*}
\end{definition}
In the case of $p = q$ we use the notation $F^{\alpha(\cdot)}_{q(\cdot)}:=F^{\alpha(\cdot)}_{q(\cdot),q(\cdot)}$.
In the following of this paper, we denote $A \approx B$ equal to $c A \leq B \leq CA$ with
$c,C$ be two constants.

\subsection{Wavelet characterization}

Now, we state some notations for wavelet theory then prove a
wavelet characterization of variable index Besov and Triebel-Lizorkin space on periodic domain.

Let $\psi^{M}$, $\psi^{F}$ be the Meyer or Daubechies wavelets described in Proposition \ref{definition_wavelet} in the Appendix.
Now define
\begin{align*}
G^{0} = \{ F, M \}^{n} \quad \text{and} \quad G^{j} = \{ F,M \}^{n*} \text{ if } j \geq 1,
\end{align*}
where the $*$ indicates that at least one $G_{i}$ of $G = (G_{1},\cdots,G_{n}) \in \{ F,M \}^{n*}$ must be an $M$.
It is clear from the definition that the cardinal number of $\{ F, M \}^{n*}$ is $2^{n} - 1$.
Let $x \in \mathbb{R}^{n}$
\begin{align}\label{high_dimension_wavelet}
\Psi_{Gm}^{j}(x) = 2^{j\frac{n}{2}} \prod_{r = 1}^{n} \psi^{Gr}(2^{j}x_{r} - m_{r}),
\end{align}
where $G \in G^{j}$, $m \in \mathbb{Z}^{n}$ and $j \in \mathbb{N}_{0}$.
Then $\{ \Psi_{Gm}^{j} : j \in \mathbb{N}_{0}, \, G\in G^{j} ,\, m \in \mathbb{Z}^{n} \}$ is an orthomormal basis in $L^{2}(\mathbb{R}^{n})$.

Define
\begin{align*}
\psi^{M}_{j,k}(x) := 2^{\frac{j}{2}} \psi^{M}(2^{j}x - k) \quad \psi^{F}_{j,k}(x) := 2^{\frac{j}{2}} \psi^{F}(2^{j}x - k)
\end{align*}
where $k \in \mathbb{Z}$.
Then, we define
\begin{align}\label{periodic_mother}
\tilde{\psi}^{M}_{j,k}(x) := \sum_{\ell \in \mathbb{Z}}\psi^{M}_{j,k}(x + \ell)
= 2^{j\frac{n}{2}}\sum_{\ell \in \mathbb{Z}} \psi^{M}(2^{j}(x+\ell)-k)
\end{align}
and
\begin{align}\label{periodic_farther}
\tilde{\psi}^{F}_{j,k}(x) := \sum_{\ell \in \mathbb{Z}}\psi^{F}_{j,k}(x + \ell)
= 2^{j\frac{n}{2}}\sum_{\ell \in \mathbb{Z}} \psi^{F}(2^{j}(x+\ell)-k).
\end{align}
Obviously, $\tilde{\psi}_{j,k}$, $\tilde{\phi}_{j,k}$ are $1$-periodic functions belongs to $L^{1}([0,1])$.

Define
\begin{align}\label{periodic_define_wavelet}
\tilde{\Psi}_{Gm}^{j}(x) = 2^{j\frac{n}{2}} \prod_{r = 1}^{n} \tilde{\psi}^{Gr}(2^{j}x_{r} - m_{r}),
\end{align}
By Proposition \ref{definition_wavelet}, we know that $\psi^{M}$, $\psi^{F}$ included in the
functions with radial decreasing $L^{1}$-majorants, that is
\begin{align*}
|\psi^{M}(x)| \leq R_{1}(|x|) \quad  |\psi^{F}(x)| \leq R_{2}(|x|),
\end{align*}
where $R_{1}$ and $R_{2}$ are bounded decreasing functions belongs to $L^{1}([0,\infty))$.
Now, we can use Theorem 5.9 in \cite{Hernandez_Weiss} to find that
$\{ \tilde{\Psi}_{Gm}^{j} : j \in \mathbb{N} \cup \{0\}, \, G\in G^{j} ,\, m \in \mathbb{M}_{j} \}$
with $\mathbb{M}_{j} = \{ m: m = 0,1,2,\cdots,2^j-1\}$ is an orthomormal basis in $L^{2}(\mathbb{T}^{n})$.
At this point, considering Corollary 5 in \cite{Kermpka} and Definition \ref{atom_periodic} in the Appendix,
we easily obtain the following theorem for wavelet
characterization of variable index Besov and Triebel-Lizorkin space on periodic domain.

\begin{theorem}\label{kong_jian_wavelet}
Let $s(\cdot) \in L^{\infty} \cap C_{loc}^{log}(\mathbb{T}^{n})$ and $p(\cdot) \in \mathcal{P}^{log}(\mathbb{T}^{n})$.
The symbol $A$ stands for $B$ or $F$ and so does a symbolize $b$ or $f$, respectively.

(i) Let $0 < q \leq \infty$ ($p^{+} < \infty$ in the $F$-case) and
\begin{align*}
k > \max (\sigma_{p} - s^{-}, s^{+}) \quad (\sigma_{p,q} \text{ in the }F\text{-case}),
\end{align*}
where $\sigma_{p} = n \left( \frac{1}{\min(1, p^{-})} - 1 \right)$
and $\sigma_{p,q} = n \left( \frac{1}{\min(1,p^{-},q)} - 1 \right)$.
Then $f \in \mathcal{S}'(\mathbb{T}^{n})$ belongs to $A_{p(\cdot), q}^{s(\cdot)}$ if, and only if,
it can be represented as
\begin{align}\label{periodic_expansion_function}
\begin{split}
f = \sum_{j = 0}^{\infty}\sum_{G \in G^{j}}\sum_{m \in \mathbb{M}_{j}} \lambda_{Gm}^{j} 2^{-j\frac{n}{2}}\tilde{\Psi}_{Gm}^{j}
\quad \text{with }\lambda \in \tilde{a}_{p(\cdot),q}^{s(\cdot)},
\end{split}
\end{align}
with $\mathbb{M}_{j} = \{ m: m = 0,1,2,\cdots,2^j-1 \}$ and
the series expansion (\ref{periodic_expansion_function}) is unconditional convergence in $\mathcal{S}'(\mathbb{R}^{n})$
and in any space $A_{p(\cdot),q}^{\sigma(\cdot)}(\mathbb{T}^{n})$, where $\sigma(x) < s(x)$ with $\inf (s(x) - \sigma(x)) > 0$
and $\sigma(x)/s(x) \rightarrow 0$ for $|x| \rightarrow \infty$. The representation (\ref{periodic_expansion_function}) is unique, we have
\begin{align*}
\lambda_{Gm}^{j} = \lambda_{Gm}^{j}(f) = 2^{j\frac{n}{2}} <f, \tilde{\Psi}_{Gm}^{j}>
\end{align*}
and
\begin{align*}
I: f \mapsto \{ 2^{j\frac{n}{2}} <f, \tilde{\Psi}_{Gm}^{j}> \}
\end{align*}
is an isomorphic map from $A_{p(\cdot),q}^{s(\cdot)}(\mathbb{T}^{n})$ onto $\tilde{a}_{p(\cdot),q}^{s(\cdot)}$.
Moreover, if in addition $\max(p^{+}, q) < \infty$, then $\{ \tilde{\Psi}_{Gm}^{j} \}_{j \in \mathbb{N}_{0}, G\in G^{j}, m\in \mathbb{M}_{j}}$
is an unconditional basis in $A_{p(\cdot),q}^{s(\cdot)}(\mathbb{T}^{n})$.

(ii) Let $q(\cdot) \in \mathcal{P}^{log}(\mathbb{T}^{n})$ with $0 < p^{-} \leq p^{+} < \infty$, $0 < q^{-} \leq q^{+} \leq \infty$
and let
\begin{align*}
k > \max(\sigma_{p,q} - s^{-}, s^{+})
\end{align*}
with $\sigma_{p,q} = n \left( \frac{1}{\min(1,p^{-},q^{-})} - 1 \right)$.
Then $f \in \mathcal{S}'(\mathbb{T}^{n})$ belongs to $F_{p(\cdot),q(\cdot)}^{s(\cdot)}(\mathbb{T}^{n})$
if, and only if, it can be represented as (\ref{periodic_expansion_function})
with $\lambda \in \tilde{f}_{p(\cdot),q(\cdot)}^{s(\cdot)}(\mathbb{T}^{n})$,
with unconditional convergence in $\mathcal{S}'(\mathbb{T}^{n})$ and in $\lambda \in \tilde{f}_{p(\cdot),q(\cdot)}^{s(\cdot)}(\mathbb{T}^{n})$.
The representation (\ref{periodic_expansion_function}) is unique, we have
\begin{align*}
\lambda_{Gm}^{j} = \lambda_{Gm}^{j}(f) = 2^{j\frac{n}{2}}<f, \tilde{\Psi}_{Gm}^{j}>
\end{align*}
and
\begin{align*}
I : f \mapsto \{ 2^{j\frac{n}{2}} <f, \tilde{\Psi}_{Gm}^{j}> \}
\end{align*}
is an isomorphic map from $F_{p(\cdot),q(\cdot)}^{s(\cdot)}(\mathbb{T}^{n})$ onto $\tilde{f}_{p(\cdot),q(\cdot)}^{s(\cdot)}(\mathbb{T}^{n})$.
\end{theorem}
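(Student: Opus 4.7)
The proof follows the standard two-step route for wavelet characterizations, namely an analysis bound and a synthesis bound, but the novelty lies in transferring Kempka's $\mathbb{R}^n$-characterization (Corollary 5 in \cite{Kermpka}) to the torus via the Hernández--Weiss periodization construction in \eqref{periodic_mother}--\eqref{periodic_define_wavelet}. My plan is as follows.

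First I would verify that the periodized wavelets $\tilde\Psi^j_{Gm}$ qualify as periodic atoms in the sense of Definition \ref{atom_periodic}. Each $\tilde\Psi^j_{Gm}$ inherits its smoothness and vanishing moments from $\psi^M$ and $\psi^F$. The only new feature is that the defining sum $\sum_{\ell \in \mathbb{Z}^n} \psi^{G_r}(2^j(x_r+\ell) - m_r)$ is infinite, but the radial decreasing $L^1$-majorants $R_1, R_2$ from Proposition \ref{definition_wavelet} make it locally finite up to exponentially decaying tails, so the atomic size and support conditions hold uniformly in $j$ and $m \in \mathbb{M}_j$.

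Second, for the analysis bound, I would take $f \in A^{s(\cdot)}_{p(\cdot),q}(\mathbb{T}^n)$, lift it to a $\mathbb{Z}^n$-periodic tempered distribution on $\mathbb{R}^n$, and apply Kempka's wavelet bound on $\mathbb{R}^n$ to the non-periodic coefficients $\langle f, \Psi^j_{G,m+2^j\ell}\rangle$. Summing over $\ell$ and reindexing $m \in \mathbb{M}_j$ converts the $\mathbb{R}^n$-coefficient sequence into the periodic one. The log-Hölder hypotheses on $s(\cdot)$ and $1/p(\cdot)$ are essential at this point: they guarantee that the weight $2^{js(x)}$ is comparable to a constant on dyadic cubes of scale $2^{-j}$, so the Peetre maximal function estimates and modular computations underlying Corollary 5 of \cite{Kermpka} transfer to $\mathbb{T}^n$ essentially verbatim. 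For the synthesis direction, given $\lambda \in \tilde a^{s(\cdot)}_{p(\cdot),q}$, the atomic representation of Step 1 gives convergence of the partial sums in $\mathcal{S}'(\mathbb{T}^n)$ together with $\|f\|_{A^{s(\cdot)}_{p(\cdot),q}} \lesssim \|\lambda\|_{\tilde a^{s(\cdot)}_{p(\cdot),q}}$. Uniqueness and the formula $\lambda^j_{Gm}(f) = 2^{jn/2}\langle f, \tilde\Psi^j_{Gm}\rangle$ follow from the $L^2(\mathbb{T}^n)$-orthonormality of $\{\tilde\Psi^j_{Gm}\}$ provided by Theorem 5.9 of \cite{Hernandez_Weiss}, which makes the analysis map $I$ a bicontinuous isomorphism. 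The asserted convergence in $A^{\sigma(\cdot)}_{p(\cdot),q}$ with $\inf(s-\sigma)>0$ then reduces to an elementary continuous embedding of sequence spaces, obtained by comparing $2^{j\sigma(\cdot)}$ to $2^{js(\cdot)}$ termwise. The $F$-case of (ii) is handled by the same scheme with Peetre's maximal function applied inside $L^{p(\cdot)}(\ell^{q(\cdot)})$ instead of $\ell^{q(\cdot)}(L^{p(\cdot)})$.

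The main obstacle will be the atomic estimates in the range $p^- < 1$ or $q^- < 1$, where the Fefferman--Stein vector-valued inequality on $L^{p(\cdot)}$ fails outright. The workaround, as in \cite{Almeida_Hasto} and \cite{Kermpka}, is the $r$-trick: one picks $r \in (0,\min(1,p^-,q^-))$, raises the atomic kernel to the power $r$, and applies the maximal inequality at the larger exponent $p(\cdot)/r$ before extracting the $r$-th root. Adapting this trick to the periodic setting requires carefully controlling the lattice sums $\sum_{\ell \in \mathbb{Z}^n}$ coming from periodization against the rapidly decaying majorants $R_1, R_2$; once this is carried out uniformly in $j$, the remainder of the argument is essentially bookkeeping that mirrors the $\mathbb{R}^n$ proof.
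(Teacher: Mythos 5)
Your plan follows essentially the same route the paper takes: periodize the Meyer/Daubechies wavelets, invoke Theorem 5.9 of \cite{Hernandez_Weiss} for orthonormality of $\{\tilde{\Psi}_{Gm}^{j}\}$ in $L^{2}(\mathbb{T}^{n})$, and transfer Corollary 5 of \cite{Kermpka} to the torus with the sequence spaces of Definition \ref{atom_periodic}. In fact your sketch (atomic verification, lattice-sum control via the $L^{1}$-majorants, and the $r$-trick for $p^{-}<1$ or $q^{-}<1$) supplies more of the transference details than the paper, which simply asserts the theorem follows from those two cited results.
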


At the end of this section, we need to introduce the following notation which used frequently in the sequel.
\begin{align}
\rho_{B_{q(\cdot)}^{s(\cdot)}}(u) =
\int_{\mathbb{T}^{n}}
\sum_{j = 0}^{\infty} \sum_{G \in G^{j}} \sum_{m \in \mathbb{M}_{j}} 2^{jq(x)s(2^{-j}m)} |\lambda_{Gm}^{j}|^{q(x)}
\chi_{jm}(x) dx,
\end{align}
where $\lambda_{Gm}^{j}$ are defined as in Theorem \ref{kong_jian_wavelet}.


\section{Variable order Besov prior}\label{prior_section}

For the reader's convenience, let us recall the general setting stated in \cite{Dashti_Stuart} for our purpose.
Denote $J$ to be an index set, let $\{\phi_{j}\}_{j \in J}$ denote an infinite sequence in the Banach space $X$, with norm $\|\cdot\|$,
of $\mathbb{R}$-valued functions defined on a domain $D$. In the following, for simplicity, we assume $D = \mathbb{T}^{n}$ to be
the $n$-dimensional torus. We normalize these functions so that $\|\phi_{j}\| = 1$ for $j \in J$.
We also introduce another element $m_{0} \in X$, not necessarily normalized to $1$.
Define the function $u$ by
\begin{align}\label{random_function_general}
u = m_{0} + \sum_{j \in J} u_{j}\phi_{j}
\end{align}
By randomizing $u := \{ u_{j} \}_{j \in J}$ we create real-valued random functions on $D$.
(The extension to $\mathbb{R}^{n}$-valued random functions is straightforward, but omitted for brevity.)
We now define the deterministic sequence $\gamma = \{ \gamma_{j} \}_{j \in J}$ and the i.i.d. random sequence
$\xi = \{ \xi_{j} \}_{j \in J}$, and set $u_{j} = \gamma_{j}\xi_{j}$. We assume that $\xi$ is centered, i.e. that
$\mathbb{E}(\xi_{1}) = 0$. Formally we see that the average value of $u$ is then $m_{0}$ so that this
element of $X$ should be thought of as the mean function.

In the following, we take $X$ to be the Hilbert space
\begin{align*}
X := L^{2}(\mathbb{T}^{n}) = \left\{ u \, : \, \mathbb{T}^{n} \rightarrow \mathbb{R} \, : \,
\int_{\mathbb{T}^{n}} |u(x)|^{2} dx < \infty \right\}
\end{align*}
of real valued periodic functions in dimension $n \leq 3$ with inner product and norm denoted by $<\cdot, \cdot>$
and $\|\cdot\|$ respectively.
We then set $m_{0} = 0$ and let
\begin{align*}
J = \{ j = 0,1,\cdots, \, G^{j}, \,  \mathbb{M}_{j}\}
\end{align*}
appeared in the Theorem \ref{kong_jian_wavelet}
which is an orthonormal basis for $X$.
Consequently, for any $u \in X$, we have
\begin{align}\label{var1_expansion}
\begin{split}
u(x) = \sum_{j = 0}^{\infty}\sum_{G \in G^{j}}\sum_{m \in \mathbb{M}_{j}} u_{Gm}^{j} \tilde{\Psi}_{Gm}^{j} \quad \text{with }
u_{Gm}^{j} = <u, \tilde{\Psi}_{Gm}^{j}>,
\end{split}
\end{align}
where $\tilde{\Psi}_{Gm}^{j}$ is the wavelet basis stated in the Theorem \ref{kong_jian_wavelet}.
Given a function $u : \mathbb{T}^{n} \rightarrow \mathbb{R}$ and the $\{ u_{Gm}^{j} \}$ as defined in (\ref{var1_expansion})
we define the Banach space $B_{q(\cdot)}^{t(\cdot)}$ by
\begin{align}\label{var_define_space}
\begin{split}
B_{q(\cdot)}^{t(\cdot)} = \left\{ u:\mathbb{T}^{n} \rightarrow \mathbb{R}\,:\, \|u\|_{F_{q(\cdot),q(\cdot)}^{t(\cdot)}}< \infty \right\}
\end{split}
\end{align}
with
\begin{align}\label{var_define_eql_space}
\|u\|_{F_{q(\cdot),q(\cdot)}^{t(\cdot)}} =
\left\|\left( \sum_{k=0}^{\infty}\left| \left( \varphi_{k}\hat{u} \right)^{\vee}
2^{kt(x)} \right|^{q(\cdot)} \right)^{1/q(\cdot)} \right\|_{L^{q(\cdot)}(\mathbb{T}^{n})},
\end{align}
where $\varphi_{j}(x) = \varphi(2^{-j}x)$ and $\varphi(\cdot)$ is a smooth decompositions of unity \cite{DanchinBook,Triebel2008}.
By Proposition 5.4 in \cite{Almeida_Hasto}, we can define
the space $B_{p(\cdot),p(\cdot)}^{s(\cdot)}$ appropriately which is equivalent to $F_{p(\cdot),p(\cdot)}^{s(\cdot)}$
if $s \in L^{\infty}$. So here, we do not need to develop a full theory for the space $B_{p(\cdot),p(\cdot)}^{s(\cdot)}$,
and just understand it as $F_{p(\cdot),p(\cdot)}^{s(\cdot)}$.
Hence, our space $B_{q(\cdot)}^{t(\cdot)}$ defined in (\ref{var_define_space}) just
the usual variable index Besov space with $p(\cdot) = q(\cdot)$.
(Although the space defined in \cite{Almeida_Hasto} is in the whole space $\mathbb{R}^{n}$, it can be
adapted to periodic case $\mathbb{T}^{n}$.)

Now, as in the general setting, we assume that $u_{Gm}^{j} = \gamma_{Gm}^{j} \xi_{Gm}^{j}$ where
$\xi = \{ \xi_{Gm}^{j} \}_{j=1,2,\cdots,\infty, G \in G^{j}, m \in \mathbb{M}_{j}}$ is an i.i.d. sequence and
$\gamma = \{ \gamma_{Gm}^{j} \}_{j=1,2,\cdots,\infty, G \in G^{j}, m \in \mathbb{M}_{j}}$ is deterministic.
Here we assume that $\xi_{Gm}^{j}$ is draw from the centred measure on $\mathbb{R}$ with density proportional
to $\exp\left( -\frac{1}{2} \int_{\mathbb{T}^{n}} |x|^{q(y)} \kappa(dy) \right)$ for some $1 \leq q^{-} \leq q^{+} < \infty$,
where $\kappa(\cdot)$ is a probability measure. We refer to the measure with above density as a generalized $q(\cdot)$-exponential
distribution.
Noting that if $q$ is constant, it is just a $q$-exponential distribution \cite{Dashti_Stuart}.
Hence, our generalized $q(\cdot)$-exponential distribution is a natural extension of $q$-exponential distribution and include
Gaussian, Laplace distribution as special case. For $s(x) \geq s^{-} > 0$ and $\delta > 0$ we define
\begin{align}\label{coefficient_de}
\begin{split}
\gamma_{Gm}^{j} = 2^{-j (s(2^{-j}m) + n/2 - n/q^{+})} \left( \frac{1}{\delta} \right)^{1/q^{+}}.
\end{split}
\end{align}

We now prove convergence of the series
\begin{align}\label{cut_off_series}
\begin{split}
u^{N} = \sum_{j = 0}^{N}\sum_{G \in G^{j}}\sum_{m \in \mathbb{M}_{j}} u_{Gm}^{j} \tilde{\Psi}_{Gm}^{j}, \quad
u_{Gm}^{j} = \gamma_{Gm}^{j}\xi_{Gm}^{j}
\end{split}
\end{align}
to the limit function
\begin{align}\label{full_series}
\begin{split}
u = \sum_{j = 0}^{\infty}\sum_{G \in G^{j}}\sum_{m \in \mathbb{M}_{j}} u_{Gm}^{j} \tilde{\Psi}_{Gm}^{j}, \quad
u_{Gm}^{j} = \gamma_{Gm}^{j}\xi_{Gm}^{j},
\end{split}
\end{align}
in an appropriate space. In order to understand the sequence of functions $\{ u^{N} \}$ fully, we introduce the following
function space:
\begin{align*}
L_{\mathbb{P}}^{q(\cdot)}(\Omega; B^{t(\cdot)}_{q(\cdot)}) :=
\left\{ u: \mathbb{T}^{n} \times \Omega \rightarrow \mathbb{R} \, : \,  \exists \, \lambda > 0, \, \rho^{E}_{B^{t(\cdot)}_{q(\cdot)}}
(\lambda u) < \infty  \right\}
\end{align*}
where
\begin{align*}
u_{k} = \left( \varphi_{k}\hat{u} \right)^{\vee}
\end{align*}
with $\varphi_{k}$ defined as in (\ref{var_define_eql_space}), and
\begin{align}\label{siminorm}
\begin{split}
\rho^{E}_{B^{t(\cdot)}_{q(\cdot)}}(u) & = \sum_{k=0}^{\infty} \inf \left\{ \lambda_{k} > 0\,:\,
\int_{\Omega} \rho_{L^{q(\cdot)}}(u_{k}\lambda_{k}^{-1/q(\cdot)}2^{kt(\cdot)}) \mathbb{P}(d\omega) \leq 1 \right\}  \\
& = \int_{\Omega} \int_{\mathbb{T}^{n}} \sum_{k = 0}^{\infty} 2^{k t(x) q(x)} |u_{k}(x,\omega)|^{q(x)} dx \mathbb{P}(d\omega).
\end{split}
\end{align}
As mentioned in section \ref{modular}, if $\rho^{E}_{B^{t(\cdot)}_{q(\cdot)}}$ is
a convex semimodular on $L_{\mathbb{P}}^{q(\cdot)}\left(\Omega; B^{t(\cdot)}_{q(\cdot)}\right)$.
Then $L_{\mathbb{P}}^{q(\cdot)}\left(\Omega; B^{t(\cdot)}_{q(\cdot)}\right)$ is a normed space with the Luxemburg norm given by
\begin{align}\label{var_define_norm}
\begin{split}
\|u\|_{L_{\mathbb{P}}^{q(\cdot)}(\Omega; B^{t(\cdot)}_{q(\cdot)})}
= \inf \left\{ \mu > 0 \, : \, \rho^{E}_{B^{t(\cdot)}_{q(\cdot)}}\left(\left( \frac{1}{\mu} \right) u\right) \leq 1 \right\}.
\end{split}
\end{align}
In order to keep the fluency of our statement,
we list the proof of $\rho^{E}_{B^{t(\cdot)}_{q(\cdot)}}$ is a convex semimodular in section \ref{yin_li}.
Now, we clarify the relation for our space $L_{\mathbb{P}}^{q(\cdot)}(\Omega; B^{t(\cdot)}_{q(\cdot)})$ with
the usual constant $q, t$ space $L_{\mathbb{P}}^{q}(\Omega; B^{t}_{q})$ used in \cite{Besov_prior}.
Let $q$, $t$ in (\ref{siminorm}) to be constants, we have
\begin{align*}
\rho^{E}_{B^{t}_{q}}(u) & = \int_{\Omega}\sum_{k=0}^{\infty}2^{k t q} \int_{\mathbb{T}^{n}} |u_{k}|^{q} dx \mathbb{P}(d\omega) \\
& = \mathbb{E}(\|u\|_{B^{t}_{q,q}}^{q}).
\end{align*}
Hence, our variable space indeed is a natural generalization of the usual space $L_{\mathbb{P}}^{q}(\Omega; B^{t}_{q})$.
Define
\begin{align}\label{xu_lie}
\begin{split}
\tilde{b}_{q(\cdot)}^{E \, s(\cdot)} := \left\{ \lambda = \{ \lambda_{Gm}^{j} \}_{j \in \mathbb{N}_{0}, G \in G^{j}, m \in \mathbb{M}_{j}}
: \, \|\lambda\|_{\tilde{b}_{q(\cdot)}^{E \, s(\cdot)}} < \infty \right\},
\end{split}
\end{align}
where
\begin{align*}
\|\lambda\|_{\tilde{b}_{q(\cdot)}^{E \, s(\cdot)}} = \left\| \left(
\sum_{j = 0}^{\infty} \sum_{G \in G^{j}} \sum_{m \in \mathbb{M}_{j}} 2^{jqs(2^{-j}m)} \mathbb{E}\left(|\lambda_{Gm}^{j}|^{q(\cdot)}\right)
\chi_{jm}(\cdot) \right)^{1/q(\cdot)} \right\|_{L^{q(\cdot)}(\mathbb{T}^{n})},
\end{align*}
and
\begin{align*}
\mathbb{E}\left(|\lambda_{Gm}^{j}|^{q(x)}\right) = \int_{\mathbb{T}^{n}}|\lambda_{Gm}^{j}(\omega)|^{q(x)} \mathbb{P}(d\omega).
\end{align*}
With these definitions, before going to our one main results in this section, we need the following Lemma, which is proved in the Section \ref{yin_li}.
\begin{lemma}\label{wavelet_characterization_measure}
Let $s(\cdot) \in L^{\infty} \cap C_{loc}^{log}(\mathbb{T}^{n})$ and $q(\cdot) \in \mathcal{P}^{log}(\mathbb{T}^{n})$.
Let
\begin{align*}
k > \max (\sigma_{q} - s^{-}, s^{+}),
\end{align*}
where $\sigma_{q} = n \left( \frac{1}{\min(1, q^{-})} - 1 \right)$.
Then $f \in \mathcal{S}'(\mathbb{T}^{n})$ belongs to $L_{\mathbb{P}}^{q(\cdot)}(\Omega; B^{s(\cdot)}_{q(\cdot)})$ if, and only if,
it can be represented as
\begin{align}\label{measure_periodic_expansion_function}
\begin{split}
f = \sum_{j = 0}^{\infty}\sum_{G \in G^{j}}\sum_{m \in \mathbb{M}_{j}} \lambda_{Gm}^{j} 2^{-j\frac{n}{2}}\tilde{\Psi}_{Gm}^{j}
\quad \text{with }\lambda \in \tilde{b}_{q(\cdot)}^{E \, s(\cdot)},
\end{split}
\end{align}
with $\mathbb{M}_{j} = \{ m: m = 0,1,2,\cdots,2^j-1 \}$ and
the series expansion (\ref{measure_periodic_expansion_function}) is unconditional convergence in $\mathcal{S}'(\mathbb{R}^{n})$.
The representation (\ref{measure_periodic_expansion_function}) is unique, we have
\begin{align*}
\lambda_{Gm}^{j} = \lambda_{Gm}^{j}(f) = 2^{j\frac{n}{2}} <f, \tilde{\Psi}_{Gm}^{j}>
\end{align*}
and
\begin{align*}
I: f \mapsto \{ 2^{j\frac{n}{2}} <f, \tilde{\Psi}_{Gm}^{j}> \}
\end{align*}
is an isomorphic map from $L_{\mathbb{P}}^{q(\cdot)}(\Omega; B^{s(\cdot)}_{q(\cdot)})$ onto $\tilde{b}_{q(\cdot)}^{E \, s(\cdot)}$.
\end{lemma}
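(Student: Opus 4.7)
The plan is to lift the deterministic wavelet characterization of Theorem \ref{kong_jian_wavelet} (in the diagonal case $p(\cdot)=q(\cdot)$) to the Bochner-type setting by applying it pathwise in $\omega \in \Omega$ and then integrating against $\mathbb{P}$ via Fubini--Tonelli. The starting observation is that when $p=q$, the mixed Lebesgue--sequence modular of Definition \ref{mixed_Legesgue_sequence} collapses to a single integral--sum, so the expression $\rho^{E}_{B^{s(\cdot)}_{q(\cdot)}}(f)$ defined in (\ref{siminorm}) is literally the expectation $\mathbb{E}[\rho_{B^{s(\cdot)}_{q(\cdot)}}(f(\cdot,\omega))]$ of the deterministic variable-index Besov modular. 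This identification is what makes the whole reduction possible.

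I would then proceed in two steps. First, apply Theorem \ref{kong_jian_wavelet}(i) for $\mathbb{P}$-almost every $\omega$ to get the pointwise modular equivalence between $\rho_{B^{s(\cdot)}_{q(\cdot)}}(f(\cdot,\omega))$ and the deterministic sequence modular $\int_{\mathbb{T}^{n}} \sum_{j,G,m} 2^{jq(x)s(2^{-j}m)} |\lambda_{Gm}^{j}(\omega)|^{q(x)} \chi_{jm}(x)\,dx$. The technical fact I would extract here is that the equivalence constants depend only on $s$, $q$, $n$, $k$ and the chosen wavelet, never on $f$ or $\omega$; this is why one must pass through the \emph{modular} form of Kempka's Corollary 5 (via the atomic decomposition of Definition \ref{atom_periodic}) rather than merely through norm equivalence. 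Second, integrate in $\omega$ against $\mathbb{P}$ and apply Fubini--Tonelli (all integrands are nonnegative) to interchange the expectation with the outer $x$-integration and the sum, producing precisely the modular $\int_{\mathbb{T}^{n}} \sum_{j,G,m} 2^{jq(x)s(2^{-j}m)} \mathbb{E}[|\lambda_{Gm}^{j}|^{q(x)}] \chi_{jm}(x)\,dx$ that generates $\|\cdot\|_{\tilde b^{E\,s(\cdot)}_{q(\cdot)}}$.

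The resulting modular equivalence $\rho^{E}_{B^{s(\cdot)}_{q(\cdot)}}(f) \approx \rho_{\tilde b^{E\,s(\cdot)}_{q(\cdot)}}(\lambda(f))$ transfers to equivalence of the corresponding Luxemburg norms by the standard convex-semimodular machinery recorded in Definitions \ref{definition_semimodular}--\ref{definition_pre_norm}, delivering the claimed isomorphism $I$. Uniqueness of the representation and the formula $\lambda_{Gm}^{j}(f) = 2^{j n/2}\langle f,\tilde\Psi_{Gm}^{j}\rangle$ are inherited from the biorthogonality of $\{\tilde\Psi_{Gm}^{j}\}$ recorded after (\ref{periodic_define_wavelet}), while the unconditional convergence in $\mathcal{S}'$ comes from applying the deterministic Theorem \ref{kong_jian_wavelet} on the full-$\mathbb{P}$-measure set on which $f(\cdot,\omega)$ has finite Besov modular (the complement being null by Fubini once $f\in L^{q(\cdot)}_{\mathbb{P}}(\Omega;B^{s(\cdot)}_{q(\cdot)})$).

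The main obstacle is the coupling between the spatial exponent $q(x)$ and the probabilistic average: since $\mathbb{E}[|\lambda_{Gm}^{j}|^{q(x)}]$ depends on $x$ through $q(x)$, the expectation cannot be factored out of the $x$-integration, which is exactly why the sequence space $\tilde b^{E\,s(\cdot)}_{q(\cdot)}$ must retain the outer $L^{q(\cdot)}$-structure rather than degenerating to a scalar expectation. Consequently, the only substantive verification is that the deterministic wavelet characterization of Theorem \ref{kong_jian_wavelet} genuinely holds with modular-level constants independent of $f$; once that is in hand, Fubini--Tonelli plus the general Luxemburg-norm framework finish the argument.
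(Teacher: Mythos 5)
Your reduction stands or falls on one claim that you defer rather than prove: that Theorem \ref{kong_jian_wavelet} holds as a \emph{modular} equivalence, $\rho_{B^{s(\cdot)}_{q(\cdot)}}(f(\cdot,\omega)) \approx \int_{\mathbb{T}^{n}}\sum_{j,G,m}2^{jq(x)s(2^{-j}m)}|\lambda_{Gm}^{j}(\omega)|^{q(x)}\chi_{jm}(x)\,dx$, with multiplicative constants uniform in $f$ (hence in $\omega$), so that you may integrate in $\omega$ by Tonelli. That is a genuine gap, not a routine verification. Theorem \ref{kong_jian_wavelet} (Kempka's Corollary 5) is a \emph{norm} isomorphism, and in variable-exponent analysis norm boundedness does not upgrade to modular inequalities: the underlying estimates (the $\eta_{\nu m}$-convolution bound of Lemma 5.4 in \cite{variable_tribel}, the summation lemma, the $r$-trick with log-H\"older continuity) are proved after normalizing so that the relevant modular is at most $1$, and they produce bounds of the form ``$\rho(Tf)\leq C$ whenever $\rho(f)\leq 1$'' plus additive error terms, not $\rho(Tf)\leq C\rho(f)$ for all $f$. (For the closely related maximal operator the unrestricted modular inequality is known to fail for non-constant exponents.) Since each path $f(\cdot,\omega)$ would have to be normalized by its own norm, the constants and additive terms do not survive the $\omega$-integration, and your Fubini step cannot be carried out as stated. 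Your own observation that $\mathbb{E}$ cannot be pulled out of the $x$-integral is exactly the reason the pathwise-then-integrate strategy does not reduce the lemma to the deterministic theorem.

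The paper avoids this by building the expectation into the proof machinery instead of into a post-hoc integration: it proves probabilistic analogues of the two key technical lemmas — Lemma \ref{basic_measure} (the $2^{-|k-j|\delta}$ summation estimate with $\mathbb{E}(\,\cdot\,^{q(x)})$ inside) and Lemma \ref{basic_maximal_instead} (the $\eta_{\nu,R}$-convolution estimate in the $L^{q(\cdot)}(\ell^{q(\cdot)}_{E})$ norm) — and then reruns Kempka's local means characterization and wavelet decomposition arguments (\cite{2microlocal}, \cite{Kermpka}) verbatim with these substitutes, so that the single normalization happens at the level of the joint modular in $(x,\omega)$. If you want to salvage your route, you would have to prove the uniform-constant modular equivalence yourself for this specific wavelet system, which amounts to redoing essentially the same work as the paper's two lemmas; as written, the proposal's ``only substantive verification'' is precisely the missing proof.
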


Now we can prove the following theorem, which gives a sufficient condition, on $t(\cdot)$, for
existence of the limiting random function.
\begin{theorem}\label{convergence_series}
For $t,s \in C^{log}_{loc}(\mathbb{T}^{n}) \cap L^{\infty}(\mathbb{T}^{n})$, $q \in \mathcal{P}^{log}(\mathcal{T}^{n})$ and
$$\sup_{x \in \mathbb{T}^{n}} \left(t(x) - s(x) + \frac{n}{q^{+}}\right) < 0$$
the sequence of functions $\{ u^{N} \}_{N=1}^{\infty}$,
given by (\ref{cut_off_series}) and (\ref{coefficient_de}) with $\xi_{Gm}^{j}$ draw from a centered generalized $q(\cdot)$-exponential
distribution, is Cauchy in the Banach space $L_{\mathbb{P}}^{q(\cdot)}\left(\Omega; B^{t(\cdot)}_{q(\cdot)}\right)$.
Thus the infinite series (\ref{full_series}) exists as an a limit in space $L_{\mathbb{P}}^{q(\cdot)}\left(\Omega; B^{t(\cdot)}_{q(\cdot)}\right)$
for all $\sup_{x \in \mathbb{T}^{n}} \left(t(x) - s(x) + \frac{n}{q^{+}}\right) < 0$.
\end{theorem}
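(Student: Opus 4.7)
The plan is to exploit the wavelet isomorphism of Lemma \ref{wavelet_characterization_measure}, which identifies $L^{q(\cdot)}_{\mathbb P}(\Omega;B^{t(\cdot)}_{q(\cdot)})$ with the coefficient space $\tilde b^{E,t(\cdot)}_{q(\cdot)}$. For $M>N$ set $w^{M,N}:=u^{M}-u^{N}$; its nonzero wavelet coefficients, in the normalization of the lemma, are $\lambda^{j}_{Gm}=2^{jn/2}\gamma^{j}_{Gm}\xi^{j}_{Gm}$ for $N<j\le M$, $G\in G^{j}$, $m\in\mathbb{M}_{j}$. It thus suffices to show that the modular $\rho^{E}_{B^{t(\cdot)}_{q(\cdot)}}(w^{M,N})$ tends to $0$ as $N\to\infty$ uniformly in $M$, since $1\le q^{-}\le q^{+}<\infty$ implies the scaling bound $\rho^{E}(\lambda w)\le\max(\lambda^{q^{-}},\lambda^{q^{+}})\,\rho^{E}(w)$ and hence, by the definition (\ref{var_define_norm}) of the Luxemburg norm, modular convergence to $0$ forces norm convergence to $0$.

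Plugging the choice (\ref{coefficient_de}) of $\gamma^{j}_{Gm}$ into $\lambda^{j}_{Gm}$ gives $2^{jn/2}\gamma^{j}_{Gm}=\delta^{-1/q^{+}}2^{j(n/q^{+}-s(2^{-j}m))}$, so that by the equivalent sequence description of $\tilde b^{E,t(\cdot)}_{q(\cdot)}$,
\[
\rho^{E}_{B^{t(\cdot)}_{q(\cdot)}}(w^{M,N})
\;\asymp\;
\int_{\mathbb{T}^{n}}\sum_{j=N+1}^{M}\sum_{G\in G^{j}}\sum_{m\in\mathbb{M}_{j}}
\delta^{-q(x)/q^{+}}\,2^{jq(x)(t(2^{-j}m)-s(2^{-j}m)+n/q^{+})}\,
\mathbb{E}|\xi^{j}_{Gm}|^{q(x)}\chi_{jm}(x)\,dx.
\]
The moments of $\xi^{j}_{Gm}$ are uniformly controlled: the density is proportional to $\exp(-\tfrac12\int|y|^{q(z)}\kappa(dz))$, and for $|y|\ge1$ the integrand is at least $|y|^{q^{-}}$, so the tail decays at least like $\exp(-c|y|^{q^{-}}/2)$ and $K:=\sup_{x\in\mathbb{T}^{n}}\mathbb{E}|\xi^{j}_{Gm}|^{q(x)}<\infty$, independent of $j,G,m$.

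Next I transfer $s,t$ from the dyadic node $2^{-j}m$ to the running variable $x$. On $\mathrm{supp}\,\chi_{jm}$ one has $|x-2^{-j}m|\lesssim 2^{-j}$, and the log-H\"older hypothesis on $s,t$ yields $|t(x)-t(2^{-j}m)|+|s(x)-s(2^{-j}m)|\le c/\log(e+2^{j})\lesssim 1/j$, so
\[
2^{jq(x)(t(2^{-j}m)-s(2^{-j}m))}\le C\,2^{jq(x)(t(x)-s(x))}
\]
with $C$ independent of $j,m,x$. Setting $\varepsilon:=-\sup_{x}(t(x)-s(x)+n/q^{+})>0$, the hypothesis of the theorem gives $q(x)(t(x)-s(x)+n/q^{+})\le-q^{-}\varepsilon$. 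Using $|G^{j}|\le 2^{n}$ and that $\{\chi_{jm}\}_{m\in\mathbb{M}_{j}}$ is an essentially disjoint partition of $\mathbb{T}^{n}$, the sums over $G$ and $m$ reduce to a bounded factor, leaving
\[
\rho^{E}_{B^{t(\cdot)}_{q(\cdot)}}(w^{M,N})
\;\le\; C(K,\delta,q,n)\sum_{j=N+1}^{M} 2^{-jq^{-}\varepsilon}
\;\longrightarrow\;0 \qquad(N\to\infty),
\]
uniformly in $M$, and combined with the first-paragraph scaling bound this yields the Cauchy property.

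The main delicate point is the log-H\"older step: because the variable exponent $q(x)$ multiplies $j$ in the exponent, even the $o(1)$ gap $|t(x)-t(2^{-j}m)|$ is amplified by a factor of $j$, and the precise rate $1/\log(e+2^{j})\sim 1/j$ provided by log-H\"older continuity is exactly what is needed to absorb this amplification into a universal constant, after which the hypothesis $\sup(t-s+n/q^{+})<0$ produces a genuinely geometric series. Everything else --- uniform moment bounds on $\xi$, the collapse of the $G,m$ sums against the partition $\{\chi_{jm}\}$, and the modular-to-norm upgrade via $q^{+}<\infty$ --- is routine.
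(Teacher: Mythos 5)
Your argument is essentially the paper's own proof: pass to wavelet coefficients via Lemma \ref{wavelet_characterization_measure}, bound $\mathbb{E}|\xi_{Gm}^{j}|^{q(x)}$ uniformly by splitting at $|\xi|=1$, obtain a geometric tail $\sum_{j>N}2^{-jq^{-}\varepsilon}$ from the hypothesis $\sup_{x}(t(x)-s(x)+n/q^{+})<0$, and upgrade modular smallness to norm smallness using $q^{+}<\infty$ (the paper cites Lemma 2.1.9 of the variable-exponent book for this last step). The only deviation is your log-H\"older transfer from $2^{-j}m$ to $x$, which is correct but unnecessary, since the modular already evaluates $t,s$ at the dyadic nodes and the supremum hypothesis applies there directly, exactly as the paper uses it.
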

\begin{proof}
By (\ref{full_series}) and the Lemma \ref{wavelet_characterization_measure}, we obtain
\begin{align}\label{dengjiameasure}
\|u\|_{L_{\mathbb{P}}^{q(\cdot)}(\Omega; B^{t(\cdot)}_{q(\cdot)})} \approx
\|\{ 2^{j\frac{n}{2}} u_{Gm}^{j} \}\|_{\tilde{b}_{q(\cdot)}^{E \, s(\cdot)}}
\end{align}
For $M > N$, every $\lambda > 0$, we have the following estimate
\begin{align}\label{guji}
\begin{split}
 & \int_{\mathbb{T}^{n}}
\sum_{j = N+1}^{M} \sum_{G \in G^{j}} \sum_{m \in \mathbb{M}_{j}} \lambda^{q(x)} 2^{jq(x)(t(2^{-j}m) + n/2)}
 |\gamma_{Gm}^{j}|^{q(x)} \mathbb{E}\left(|\xi_{Gm}^{j}|^{q(x)}\right) \chi_{jm}(x) dx \\
\leq & C \max(\lambda^{q^{+}}, \lambda^{q^{-}}) \delta^{-1} \int_{\mathbb{T}^{n}} \sum_{j = N+1}^{M} \sum_{G\in G^{j}} \sum_{m\in \mathbb{M}_{j}}
2^{jq(x)(t(2^{-j}m)-s(2^{-j}m)+n/q^{+})} \chi_{jm}(x) dx    \\
\leq & C \max(\lambda^{q^{+}}, \lambda^{q^{-}}) \delta^{-1}\sum_{j = N+1}^{M}\sum_{m \in \mathbb{M}_{j}} 2^{jq^{-}(t(2^{-j}m)-s(2^{-j}m)+n/q^{+})}
2^{-jn}
\end{split}
\end{align}
where we used
\begin{align*}
\mathbb{E}\left(|\xi_{Gm}^{j}|^{q(x)}\right) & \leq C \int_{\mathbb{R}\cap \{|\xi| > 1\}} |\xi|^{q^{+}}
\exp \left( -\frac{1}{2} \int_{\mathbb{T}^{n}} |\xi|^{q(x)} \kappa(dx) d\xi \right)     \\
& \quad\quad\quad\quad + C \int_{\mathbb{R}\cap \{|\xi| \leq 1\}} |\xi|^{q^{-}}
\exp \left( -\frac{1}{2} \int_{\mathbb{T}^{n}} |\xi|^{q(x)} \kappa(dx) d\xi \right) \\
& \leq C \int_{\mathbb{R}\cap \{|\xi| > 1\}} |\xi|^{q^{+}} \exp\left( -\frac{1}{2} |\xi|^{q^{-}} d\xi \right) \\
& \quad\quad\quad\quad + C \int_{\mathbb{R}\cap \{|\xi| \leq 1\}} |\xi|^{q^{-}} \exp\left( -\frac{1}{2} |\xi|^{q^{+}} d\xi \right) < \infty.
\end{align*}
The sum on the last line of (\ref{guji}) tends to $0$ as $N \rightarrow \infty$, provided
$$\sup_{x \in \mathbb{T}^{n}} \left(t(x) - s(x) + \frac{n}{q^{+}}\right) < 0.$$
Hence, by Lemma 2.1.9. in \cite{variable_book}, we obtain
\begin{align*}
\lim_{N \rightarrow \infty} \|\{ 2^{j\frac{n}{2}} (u_{Gm}^{N \, j} - u_{Gm}^{M \, j}) \}\|_{\tilde{b}_{q(\cdot)}^{E \, s(\cdot)}} = 0.
\end{align*}
Finally, by (\ref{dengjiameasure}), we complete the proof.
\end{proof}
\begin{remark}\label{intuitive_density}
Here we give an intuitive meaning for the random series which we defined in (\ref{full_series}).
Assume the probability measure $\kappa(\cdot)$ in the centered generalized $q(\cdot)$-exponential distribution is
an uniform measure that is
$\kappa(dx) = dx$ in $\mathbb{T}^{n}$.
Since $\tilde{\Psi}_{Gm}^{j}$ is an orthonormal basis and
\begin{align}\label{intuitive_full_series}
\begin{split}
u = \sum_{j = 0}^{\infty}\sum_{G \in G^{j}}\sum_{m \in \mathbb{M}_{j}} u_{Gm}^{j} \tilde{\Psi}_{Gm}^{j}
\end{split}
\end{align}
with $u_{Gm}^{j} = 2^{-j (s(2^{-j}m) + n/2 - n/q^{+})} \left( \frac{1}{\delta} \right)^{1/q^{+}} \xi_{Gm}^{j}$, denote
$\lambda_{Gm}^{j} = 2^{j\frac{n}{2}}u_{Gm}^{j}$, we have
\begin{align*}
& \prod_{j=0}^{\infty}\prod_{G\in G^{j}}\prod_{m\in \mathbb{M}_{j}}
\exp\left( -\frac{1}{2}\int_{\mathbb{T}^{n}} |\xi_{Gm}^{j}|^{q(x)}dx \right)    \\
= & \prod_{j=0}^{\infty}\prod_{G\in G^{j}}\prod_{m\in \mathbb{M}_{j}} \exp \left( -\frac{1}{2}
\int_{\mathbb{T}^{n}} |\gamma_{Gm}^{j}|^{-q(x)}|u_{Gm}^{j}|^{q(x)} dx \right)   \\
= & \prod_{j=0}^{\infty}\prod_{G\in G^{j}}\prod_{m\in \mathbb{M}_{j}} \exp\left( -\frac{1}{2}\int_{\mathbb{T}^{n}}
\delta^{\frac{q(x)}{q^{+}}} 2^{jq(x)\left( s(2^{-jm}) - \frac{n}{q^{+}} \right)} |\lambda_{Gm}^{j}|^{q(x)}dx \right)    \\
\leq & \prod_{j=0}^{\infty}\prod_{G\in G^{j}}\prod_{m\in \mathbb{M}_{j}} \exp\left( -\frac{1}{2}\int_{\mathbb{T}^{n}}
\delta^{\frac{q(x)}{q^{+}}} 2^{jq(x)\left( s(2^{-jm}) - \frac{n}{q^{+}} \right)} |\lambda_{Gm}^{j}|^{q(x)}2^{jn}\chi_{jm}(x) dx \right)    \\
\leq & \exp\left( -\frac{1}{2}\min\{ \delta^{\frac{q^{-}}{q^{+}}}, \delta \}\rho_{B_{q(\cdot)}^{s(\cdot)}}(u) \right).
\end{align*}
Thus, informally, the Lebesgue density of $u$ can be controlled by a Lebesgue density proportional to
$\exp\left( -\frac{1}{2}\min\{ \delta^{\frac{q^{-}}{q^{+}}}, \delta \} \rho_{B_{q(\cdot)}^{s(\cdot)}}(u) \right)$.
Since $\rho_{B_{q(\cdot)}^{s(\cdot)}}(u)$ related to the space $B_{q(\cdot)}^{s(\cdot)}(\mathbb{T}^{n})$
 (Theorem \ref{kong_jian_wavelet}),
and the space $B_{q(\cdot)}^{s(\cdot)}(\mathbb{T}^{n})$ is a generalization of constant index space, we may guess
the Lebesgue density of $u$ related to a Lebesgue density similar to
$\exp\left( -\frac{1}{2}\min\{ \delta^{\frac{q^{-}}{q^{+}}}, \delta \} \rho_{B_{q(\cdot)}^{s(\cdot)}}(u) \right)$.
At least, if we choose $q$ as constant and just $s$ is a function, we will have an
equality which informally means the Lebesgue density of $u$ is proportional to
$\exp\left( -\frac{1}{2}\delta  \rho_{B_{q}^{s(\cdot)}}(u) \right)$.
Hence, the probability measure we defined may be related to the space $B_{q(\cdot)}^{s(\cdot)}(\mathbb{T}^{n})$.
So we may say that $u$ is distributed according to an $B_{q(\cdot)}^{s(\cdot)}(\mathbb{T}^{n})$ measure with parameter $\delta$, or,
briefly, a $(\delta, B_{q(\cdot)}^{s(\cdot)}(\mathbb{T}^{n}))$ measure.
\end{remark}


\begin{theorem}\label{convergence_series_if_only_if}
Assume that $u$ is given by (\ref{full_series}) and (\ref{coefficient_de}) with $\xi_{Gm}^{j}$ for every $\{j,G,m\}$ draw from a
centered generalized $q(\cdot)$-exponential distribution with $\kappa(dx) = dx$ that is to say $\kappa(\cdot)$ is an uniform probability measure.
In other words, $u$ be distributed according to a $(\delta, B^{s(\cdot)}_{q(\cdot)})$ measure.
In addition, we assume $t,s\in C^{log}_{loc}(\mathbb{T}^{n})\cap L^{\infty}(\mathbb{T}^{n})$, $q\in \mathcal{P}^{log}(\mathbb{T}^{n})$
and $t(x) - s(x) + \frac{n}{q^{+}} \neq 0$ for every
$x \in \mathbb{T}^{n}$.
Then the following are equivalent:
\begin{enumerate}
  \item $\rho_{B_{q(\cdot)}^{t(\cdot)}}(u) < \infty \quad \mathbb{P}\text{-a.s.}$;
  \item $\mathbb{E}\left( \exp\left( \alpha \rho_{B_{q(\cdot)}^{t(\cdot)}}(u) \right) \right)<\infty$ for any $\alpha \in [0, \delta/2)$;
  \item $\sup_{x\in \mathbb{T}^{n}} \left(t(x)-s(x)+\frac{n}{q^{+}}\right) < 0$.
\end{enumerate}
\end{theorem}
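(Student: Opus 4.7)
The plan is to represent $\rho_{B_{q(\cdot)}^{t(\cdot)}}(u)$ as a sum of independent non-negative summands and then handle each implication directly on that series. Using $\lambda_{Gm}^j = 2^{jn/2}\gamma_{Gm}^j\xi_{Gm}^j$ together with the definition of $\gamma_{Gm}^j$, one rewrites
\begin{align*}
\rho_{B_{q(\cdot)}^{t(\cdot)}}(u) = \sum_{j,G,m} Y_{Gm}^j(\xi_{Gm}^j), \qquad Y_{Gm}^j(\xi) := \int_{\mathbb{T}^n} 2^{jq(x)\alpha_{jm}}\delta^{-q(x)/q^+}|\xi|^{q(x)}\chi_{jm}(x)\,dx,
\end{align*}
where $\alpha_{jm} := t(2^{-j}m) - s(2^{-j}m) + n/q^+$. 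By the i.i.d.\ assumption on $\{\xi_{Gm}^j\}$, the family $\{Y_{Gm}^j(\xi_{Gm}^j)\}_{j,G,m}$ is a family of independent non-negative random variables, and the implication (2)$\Rightarrow$(1) is immediate.

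For (3)$\Rightarrow$(2), I would use independence to factor $\mathbb{E}(\exp(\alpha\rho)) = \prod_{j,G,m}\mathbb{E}(\exp(\alpha Y_{Gm}^j))$ and prove $\sum_{j,G,m}[\mathbb{E}(\exp(\alpha Y_{Gm}^j))-1] < \infty$. Setting $\epsilon := -\sup_x(t(x)-s(x)+n/q^+) > 0$, condition (3) gives $\alpha_{jm} \leq -\epsilon$ uniformly, whence the pointwise estimate $Y_{Gm}^j(\xi) \leq C_\delta 2^{-j(q^-\epsilon+n)}(1+|\xi|^{q^+})$ with $C_\delta := \max(\delta^{-1},\delta^{-q^-/q^+})$. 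Since $p(\xi)\lesssim \exp(-\tfrac12|\xi|^{q^-})$ for $|\xi|\geq 1$, every polynomial moment of $|\xi|$ is finite. For $j$ large enough that $2\alpha C_\delta 2^{-jq^-\epsilon} \leq 1/4$, the bound $2\alpha Y_{Gm}^j(\xi) \leq \tfrac14\int_{\mathbb{T}^n}|\xi|^{q(y)}dy$ produces a uniform bound on $\mathbb{E}(\exp(2\alpha Y_{Gm}^j))$ (this expectation becomes a ratio of normalization constants of two valid densities). Combining the Taylor-type inequality $e^x-1 \leq x + \tfrac12 x^2 e^x$ with Cauchy--Schwarz then yields $\mathbb{E}(\exp(\alpha Y_{Gm}^j)) - 1 \lesssim 2^{-j(q^-\epsilon+n)}$; summed over the $\sim 2^{jn}$ indices $(G,m)$ at level $j$, this contributes $O(2^{-jq^-\epsilon})$ per level, which is summable in $j$. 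The finitely many small-$j$ factors are treated individually using $\alpha Y_{Gm}^j(\xi) \leq \alpha C_\delta\int_{\mathbb{T}^n}|\xi|^{q(x)}dx$ together with the hypothesis $\alpha < \delta/2$.

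For (1)$\Rightarrow$(3), I would argue by contrapositive via Kolmogorov's three-series theorem: for independent non-negative summands, $\rho < \infty$ a.s.\ iff $\sum_{j,G,m}\mathbb{E}(Y_{Gm}^j\wedge 1) < \infty$. If (3) fails, then the hypothesis $t - s + n/q^+ \neq 0$ combined with continuity on the compact torus forces the existence of an open $U \subset \mathbb{T}^n$ and $\epsilon > 0$ with $t(x)-s(x)+n/q^+ > \epsilon$ on $U$. For the $\sim |U|\cdot 2^{jn}$ pairs $(j,m)$ with $2^{-j}m \in U$, the lower bound $Y_{Gm}^j(\xi) \geq c\cdot 2^{j(q^-\epsilon - n)}|\xi|^{q^-}\mathbf{1}_{\{|\xi|\geq 1\}}$ together with a truncation argument gives $\mathbb{E}(Y_{Gm}^j\wedge 1)\gtrsim \min(2^{j(q^-\epsilon-n)},1)$; multiplying by the cardinality at level $j$ yields $\gtrsim |U|\min(2^{jq^-\epsilon}, 2^{jn})$ per level, which diverges as $j \to \infty$. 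Hence $\rho = \infty$ a.s., contradicting (1).

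The main obstacle is the (3)$\Rightarrow$(2) direction. Controlling $\mathbb{E}(\exp(2\alpha Y_{Gm}^j))$ uniformly is delicate because the variable exponent $q(\cdot)$ appears both in the Lebesgue density $p(\xi)$ and in the integrand $|\xi|^{q(x)}$, so the constant-$q$ Fernique argument of \cite{Besov_prior} does not apply directly; one has to split the index set into a finite initial segment (where the bound $\alpha < \delta/2$ is essential and $C_\delta$ may be large) and a tail where the prefactor $2^{-jq^-\epsilon}$ has already become small enough to absorb $\alpha C_\delta$ and make the per-term corrections absolutely summable.
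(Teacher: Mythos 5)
Your proposal is correct, and it is worth separating its two halves. For (2)$\Rightarrow$(1) and (3)$\Rightarrow$(2) you follow essentially the same strategy as the paper: write $\rho_{B^{t(\cdot)}_{q(\cdot)}}(u)$ as a sum of independent non-negative summands $Y^{j}_{Gm}$, factor the exponential moment by independence, and sum the per-factor corrections using the uniform negativity $t-s+n/q^{+}\leq-\epsilon$. In fact your bookkeeping is more careful than the printed proof: the paper's displayed chain silently collapses the product over $m\in\mathbb{M}_{j}$ into a single ratio and then absorbs the remaining multiplicity into a fixed power $K$, whereas you retain the cube volume $2^{-jn}$ coming from $\chi_{jm}$ and balance it against the $\sim 2^{jn}$ indices at level $j$, which is exactly what makes the level-$j$ contribution $O(2^{-jq^{-}\epsilon})$ and hence summable. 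For (1)$\Rightarrow$(3) your route is genuinely different: the paper fixes a point $x$ at which the integrand is finite, follows the nested dyadic cubes containing $x$, and uses the strong law of large numbers to rule out a non-negative exponent along infinitely many scales, concluding by continuity together with the hypothesis $t-s+n/q^{+}\neq0$; you instead apply the Kolmogorov three-series criterion for independent non-negative summands in contrapositive form, localizing to an open set where the exponent exceeds $\epsilon$. Both arguments are valid; the paper's is more elementary (only SLLN), while yours delivers the dichotomy $\rho_{B^{t(\cdot)}_{q(\cdot)}}(u)=\infty$ almost surely directly from the divergence of $\sum\mathbb{E}\left(Y^{j}_{Gm}\wedge1\right)$. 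One shared caveat: since the modular actually carries the factor $\delta^{-q(x)/q^{+}}$, the finitely many small-$j$ factors need $\alpha\max\{\delta^{-1},\delta^{-q^{-}/q^{+}}\}<1/2$ rather than $\alpha<\delta/2$; the paper writes $\delta^{-1}$ throughout and so has the same imprecision, hence this is not a gap of yours relative to the paper, but it should be flagged if the threshold $\delta/2$ in statement (2) is to be taken literally.
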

\begin{proof}
(3) $\Rightarrow$ (2).

Since $\sup_{x\in \mathbb{T}^{n}} \left(t(x)-s(x)+\frac{n}{q^{+}}\right) < 0$, there exists a negative constant $\beta < 0$
such that $\sup_{x\in \mathbb{T}^{n}} \left(t(x)-s(x)+\frac{n}{q^{+}}\right) \leq \beta < 0$.
Let $K$ to be a large enough positive constant, then we have
\begin{align*}
& \mathbb{E}\left( \exp\left( \alpha \rho_{B_{q(\cdot)}^{t(\cdot)}}(u) \right) \right) \\
= &
\prod_{j=0}^{\infty}\prod_{G \in G^{j}} \mathbb{E}\left( \exp( \alpha\delta^{-1}\sum_{m \in \mathbb{M}_{j}}
\int_{\mathbb{T}^{n}} 2^{jq(x)(t(2^{-jm})-s(2^{-jm})+n/q^{+})}|\xi_{Gm}^{j}|^{q(x)}\chi_{jm}(x)dx ) \right) \\
\leq &
\prod_{j=0}^{\infty}\prod_{G\in G^{j}} \mathbb{E}\left( \exp\left( \alpha\delta^{-1}\sum_{m\in \mathbb{M}_{j}}
\int_{\mathbb{T}^{n}} 2^{jq(x)\beta} |\xi_{Gm}^{j}|^{q(x)} \chi_{jm}(x) dx \right) \right)  \\
\leq &
\prod_{j=0}^{\infty}\prod_{G \in G^{j}}
\frac{\int_{\mathbb{R}}\exp\left( \alpha\delta^{-1}\int_{\mathbb{T}^{n}}2^{jq(x)\beta}|\xi|^{q(x)}dx
-\frac{1}{2}\int_{\mathbb{T}^{n}}|\xi|^{q(x)}dx \right)d\xi}{\int_{\mathbb{R}}\exp\left( -\frac{1}{2}
\int_{\mathbb{T}^{n}} |\xi|^{q(x)} dx \right)d\xi} \\
\leq &
\left(\prod_{j=0}^{\infty}
\frac{\int_{\mathbb{R}}\exp\left( \alpha\delta^{-1}\int_{\mathbb{T}^{n}}2^{jq(x)\beta}|\xi|^{q(x)}dx
-\frac{1}{2}\int_{\mathbb{T}^{n}}|\xi|^{q(x)}dx \right)d\xi}{\int_{\mathbb{R}}\exp\left( -\frac{1}{2}
\int_{\mathbb{T}^{n}} |\xi|^{q(x)} dx \right)d\xi} \right)^{K}.
\end{align*}
If we want to prove the above infinite product converge, we only need to prove the following
summation converge \cite{infinite_product}.
\begin{align}\label{var_product_infinite}
\begin{split}
& \sum_{j=0}^{\infty} \int_{\mathbb{R}} \left( \exp\left( \alpha\delta^{-1}\int_{\mathbb{T}^{n}} 2^{jq(x)\beta}|\xi|^{q(x)}dx \right) - 1 \right)
\exp\left( -\frac{1}{2}\int_{\mathbb{T}^{n}} |\xi|^{q(x)} dx \right) d\xi   \\
\leq & \int_{\mathbb{R}} \sum_{j=0}^{\infty}\sum_{k=1}^{\infty}\frac{(\alpha\delta^{-1})^{k}}{k!}\left(
\int_{\mathbb{T}^{n}} 2^{jq(x)\beta}|\xi|^{q(x)}dx \right)^{k} \exp\left( -\frac{1}{2}\int_{\mathbb{T}^{n}}|\xi|^{q(x)}dx \right) d\xi.
\end{split}
\end{align}
Now we concentrate on the first summation term in the above integral.
\begin{align*}
& \sum_{j=0}^{\infty}\sum_{k=1}^{\infty}\frac{(\alpha\delta^{-1})^{k}}{k!}\left(
\int_{\mathbb{T}^{n}} 2^{jq(x)\beta}|\xi|^{q(x)}dx \right)^{k}  \\
= & \sum_{k=1}^{\infty} \frac{(\alpha\delta^{-1})^{k}}{k!}\left( \sum_{j=0}^{\infty}\left( \int_{\mathbb{T}^{n}}
2^{jq(x)\beta}|\xi|^{q(x)}dx \right)^{k} \right)^{\frac{1}{k}k} \\
\leq &
\sum_{k=1}^{\infty}\frac{(\alpha\delta^{-1})^{k}}{k!}
\left( \int_{\mathbb{T}^{n}} \left( \sum_{j=0}^{\infty}2^{jq(x)\beta k} \right)^{1/k} |\xi|^{q(x)} dx \right)^{k}   \\
\leq &
\frac{1}{1-2^{\beta q^{-}}} \sum_{k=1}^{\infty}\frac{(\alpha\delta^{-1})^{k}}{k!}
\left( \int_{\mathbb{T}^{n}} |\xi|^{q(x)} dx \right)^{k}    \\
\leq &
\frac{1}{1-2^{\beta q^{-}}} \exp\left( \alpha\delta^{-1}\int_{\mathbb{T}^{n}} |\xi|^{q(x)} dx \right).
\end{align*}
Substituting the above inequality into (\ref{var_product_infinite}), we obtain that
\begin{align*}
& \sum_{j=0}^{\infty} \int_{\mathbb{R}} \left( \exp\left( \alpha\delta^{-1}\int_{\mathbb{T}^{n}} 2^{jq(x)\beta}|\xi|^{q(x)}dx \right) - 1 \right)
\exp\left( -\frac{1}{2}\int_{\mathbb{T}^{n}} |\xi|^{q(x)} dx \right) d\xi   \\
\leq & \frac{1}{1-2^{\beta q^{-}}} \int_{\mathbb{R}}\exp\left( \left(\alpha\delta^{-1} - \frac{1}{2}\right)
\int_{\mathbb{T}^{n}} |\xi|^{q(x)} dx \right)d\xi < \infty,
\end{align*}
where we used $\alpha\delta^{-1} < \frac{1}{2}$.

(2) $\Rightarrow$ (1).

If (1) does not hold, $\rho_{B_{q(\cdot)}^{t(\cdot)}}(u)$ is positive infinite on a set of positive measure $S$.
Then, since for $\alpha > 0$, $\exp\left( \alpha \rho_{B_{q(\cdot)}^{t(\cdot)}}(u) \right) = + \infty$ if
$\rho_{B_{q(\cdot)}^{t(\cdot)}}(u) = + \infty$, and
$$\mathbb{E}\left( \exp\left( \alpha \rho_{B_{q(\cdot)}^{t(\cdot)}}(u) \right) \right) \geq
\mathbb{E}\left( 1_{S}\exp\left( \alpha \rho_{B_{q(\cdot)}^{t(\cdot)}}(u) \right) \right),$$
we get a contradiction.

(1) $\Rightarrow$ (3).

Since $\rho_{B_{q(\cdot)}^{t(\cdot)}}(u) < \infty$, we easily know
\begin{align*}
\int_{\mathbb{T}^{n}}\sum_{j=0}^{\infty}\sum_{G\in G^{j}}\sum_{m\in \mathbb{M}_{j}}
2^{jq(x)(t(2^{-j}m)-s(2^{-j}m)+n/q^{+})} |\xi_{Gm}^{j}|^{q(x)} \chi_{jm}(x)dx < + \infty.
\end{align*}
Hence, for almost all $x \in \mathbb{T}^{n}$, the integrand in the above formula is finite.
Choose $x \in \mathbb{T}^{n}$ such that
\begin{align*}
\sum_{j=0}^{\infty}\sum_{G\in G^{j}}\sum_{m\in \mathbb{M}_{j}}
2^{jq(x)(t(2^{-j}m)-s(2^{-j}m)+n/q^{+})} |\xi_{Gm}^{j}|^{q(x)} \chi_{jm}(x) < \infty.
\end{align*}
So for every $j$, there is $m = m_{x,j}$ such that
\begin{align*}
\sum_{j=0}^{\infty}\sum_{G\in G^{j}}
2^{jq(x)(t(2^{-j}m_{x,j})-s(2^{-j}m_{x,j})+n/q^{+})} |\xi_{Gm_{x,j}}^{j}|^{q(x)} < \infty.
\end{align*}
If $t(2^{-j}m_{x,j})-s(2^{-j}m_{x,j})+n/q^{+} \geq 0 $, we have
\begin{align*}
\sum_{j=0}^{\infty}\sum_{G\in G^{j}} |\xi_{Gm_{x,j}}^{j}|^{q(x)} < \infty.
\end{align*}
Since there exist $c,C$ such that $0 < c \leq \mathbb{E}(|\xi_{Gm}^{j}|^{q(x)}) \leq C < \infty$ for every $\{j, G, m\}$,
this contradicts the law of large numbers e.g. Theorem \ref{low_of_large_number}.
So we obtain $t(2^{-j}m_{x,j})-s(2^{-j}m_{x,j})+n/q^{+} < 0 $ for infinite $j$.
By the definition of $\chi_{jm}(\cdot)$, we know $2^{-j}m_{x,j} \rightarrow x$, hence, we find that
\begin{align*}
t(x) - s(x) + \frac{n}{q^{+}} \leq 0.
\end{align*}
In addition, by our assumption and the continuity of $t(\cdot)$ and $s(\cdot)$, we finally obtain
\begin{align*}
t(x) - s(x) + \frac{n}{q^{+}} < 0
\end{align*}
for every $x \in \mathbb{T}^{n}$.
\end{proof}
\begin{remark}
Theorem \ref{convergence_series_if_only_if} assumed $t(x)-s(x)+\frac{n}{q^{+}} \neq 0$ and $\kappa(\cdot)$ is a uniform
probability distribution which seems technical, however, for the constant $q,t,s$ case these conditions are all satisfied
naturally. How to remove these conditions will be left to our future work.
\end{remark}

In the previous two theorems, we proved basic properties for random variables construct from infinite series (\ref{full_series}).
Now we study the situation where the family $\tilde{\Psi}_{Gm}^{j}$ have a uniform H\"{o}lder exponent $\alpha$
and study the implications for H\"{o}lder continuity of the random function $u$.
We assume that there are $C, a, b > 0$ and $\alpha \in (0,1]$ such that, for all $j \geq 0$,
\begin{align}\label{var_holder_basis}
\begin{split}
|\tilde{\Psi}_{Gm}^{j}(x)| & \leq C 2^{jn b}, \quad x\in \mathbb{T}^{n}, \\
|\tilde{\Psi}_{Gm}^{j}(x)-\tilde{\Psi}_{Gm}^{j}(y)| & \leq C 2^{jn a} |x-y|^{\alpha}, \quad x,y\in \mathbb{T}^{n}.
\end{split}
\end{align}
We also assume that $a > b$ as in \cite{Dashti_Stuart}.
\begin{theorem}\label{holder_continuity_theorem}
Assume that $u$ is given by (\ref{full_series}) and (\ref{coefficient_de}) with $\xi_{Gm}^{j}$ draw from a
centered generalized $q(\cdot)$-exponential distribution. Suppose also that (\ref{var_holder_basis}) hold and that
$s\in C_{loc}^{log}(\mathbb{T}^{n}) \cap L^{\infty}(\mathbb{T}^{n})$,
$q \in \mathcal{P}^{log}(\mathbb{T}^{n})$,
$\inf_{x\in \mathbb{T}^{n}} s(x) > n \left( b + \frac{1}{q^{+}} + \frac{1}{2}\theta (a-b) \right)$ for some
$\theta \in (0,2)$. Then $\mathbb{P}$-a.s. we have $u \in C^{\beta}(\mathbb{T}^{n})$ for all $\beta < \frac{\alpha\theta}{2}$.
\end{theorem}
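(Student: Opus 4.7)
The plan is to represent the candidate limit function as the dyadic telescoping sum
\[
u = \sum_{j = 0}^{\infty} v_j, \qquad v_j := \sum_{G \in G^j}\sum_{m\in \mathbb{M}_j} u_{Gm}^j\,\tilde\Psi^j_{Gm},
\]
and to prove that $\sum_j \bigl(\|v_j\|_{L^\infty}+[v_j]_{C^\beta}\bigr)<\infty$ almost surely. Completeness of $C^\beta(\mathbb{T}^n)$ then gives $u\in C^\beta$ with probability one, with limit the $u$ already constructed in Theorem~\ref{convergence_series}.

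First I would combine the two estimates in (\ref{var_holder_basis}) via a convex geometric interpolation with weights $1-\theta/2$ and $\theta/2$, admissible precisely because $\theta\in(0,2)$, to obtain
\[
\bigl|\tilde\Psi^j_{Gm}(x) - \tilde\Psi^j_{Gm}(y)\bigr| \leq C\, 2^{jn(b + \theta(a-b)/2)}\, |x-y|^{\alpha\theta/2}.
\]
At each fixed level $j$ the wavelets have essentially disjoint supports, so only $O(1)$ indices $m$ are relevant at any given point of $\mathbb{T}^n$. Combined with the pointwise bound $|\tilde\Psi^j_{Gm}|\leq C\, 2^{jnb}$, this yields, for every $\beta\leq\alpha\theta/2$,
\[
\|v_j\|_{L^\infty}\leq C\, 2^{jnb}\max_{G,m}|u_{Gm}^j|,\qquad [v_j]_{C^\beta} \leq C\, 2^{jn(b+\theta(a-b)/2)}\max_{G,m}|u_{Gm}^j|.
\]

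Substituting (\ref{coefficient_de}) and using $s(2^{-j}m)\geq s^-$ produces $\max_{G,m}|u_{Gm}^j|\leq \delta^{-1/q^+}\,2^{-j(s^-+n/2-n/q^+)}\max_{G,m}|\xi_{Gm}^j|$. The centered generalized $q(\cdot)$-exponential distribution has tail bounded by $\exp(-c\,t^{q^-})$ uniformly in $x$, and there are only $O(2^{jn})$ coefficients at level $j$, so a union bound combined with Borel--Cantelli gives $\max_{G,m}|\xi_{Gm}^j|\leq C\, j^{1/q^-}$ almost surely for every sufficiently large $j$. The exponent of $2^j$ in the resulting estimate for $[v_j]_{C^\beta}$ is $n(b+\theta(a-b)/2)-s^- - n/2 + n/q^+$, which is strictly negative under the hypothesis $s^->n(b+1/q^+ + \theta(a-b)/2)$ (with $n/2$ of slack to spare). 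Hence both $[v_j]_{C^\beta}$ and $\|v_j\|_{L^\infty}$ decay geometrically in $j$ and are summable $\mathbb{P}$-almost surely, and choosing $\theta\in(0,2)$ with $\beta<\alpha\theta/2$ completes the proof.

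The main obstacle is the finite-overlap localization step: it is immediate for the compactly supported Daubechies wavelets, but in the Meyer case one must replace the cheap $\max_{G,m}|u_{Gm}^j|$ bound by a sum weighted by the radial majorants $R_1,R_2$ from Proposition~\ref{definition_wavelet} and invoke their $L^1$-integrability to recover an $O(1)$-contribution per point. The probabilistic half is then routine since the $q(\cdot)$-exponential tails are uniformly dominated through $q^-$, so no variable-index subtlety intrudes on the tail estimate itself; everything else is geometric series bookkeeping.
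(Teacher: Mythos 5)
Your proposal is correct in substance but takes a genuinely different route from the paper. The paper never decomposes into dyadic blocks: it feeds the whole series directly into the Kolmogorov-type continuity criterion recalled as Theorem \ref{kolmogorov_test} (from \cite{Dashti_Stuart}), taking the deterministic functions to be $\gamma^{j}_{Gm}\tilde\Psi^{j}_{Gm}$ and $\delta=\theta$, and simply verifies convergence of the two moment-type sums $S_{1}=\sum|\gamma^{j}_{Gm}|^{2}\|\tilde\Psi^{j}_{Gm}\|_{L^{\infty}}^{2}$ and $S_{2}=\sum|\gamma^{j}_{Gm}|^{2}\|\tilde\Psi^{j}_{Gm}\|_{L^{\infty}}^{2-\theta}\,(2^{jna})^{\theta}$; the crude count of $2^{jn}$ coefficients per level is harmless there because the hypothesis on $s$ is calibrated exactly so that the resulting exponents $c_{1},c_{2}$ are positive. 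Your route --- geometric interpolation of (\ref{var_holder_basis}), the almost-sure bound $\max_{G,m}|\xi^{j}_{Gm}|\lesssim j^{1/q^{-}}$ via union bound and Borel--Cantelli, and summation of the blockwise $C^{\alpha\theta/2}$ norms --- buys a pathwise quantitative estimate and even attains the endpoint exponent $\alpha\theta/2$, with $n/2$ of slack in the condition on $s$; but be aware that the finite-overlap/decay localization is the load-bearing step, not a technicality: the uniform bounds (\ref{var_holder_basis}) alone would force you to sum $2^{jn}$ identical terms at level $j$, which costs $n$ in the exponent and exceeds your $n/2$ of slack, so without localization your argument fails where the paper's does not. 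For Daubechies wavelets the overlap is indeed $O(1)$; in the Meyer case the near/far splitting you sketch must be carried out with a combined bound of the form $\min$ of the radial-majorant decay from Proposition \ref{definition_wavelet} and the H\"older estimate (the $L^{1}$ majorant alone controls the sup-norm sum but not directly the H\"older difference of the level-$j$ block). With that step written out, and the routine identification of your $C^{\beta}$ limit with the limit of Theorem \ref{convergence_series}, your proof is complete; the paper's criterion avoids localization entirely at the price of a slightly less explicit, non-endpoint conclusion.
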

\begin{proof}
Here we need to use Theorem \ref{kolmogorov_test} listed in the Appendix which is a variant of the
Kolmogorov continuity theorem. Denoted as in the Theorem \ref{kolmogorov_test} but use our series (\ref{full_series}), we obtain
\begin{align*}
S_{1} & = \sum_{j=0}^{\infty}\sum_{G\in G^{j}}\sum_{m\in \mathbb{M}_{j}} |\gamma_{Gm}^{j}|^{2} \|\tilde{\Psi}_{Gm}^{j}\|_{L^{\infty}}^{2}  \\
& \leq C \sum_{j=0}^{\infty}\sum_{G\in G^{j}}\sum_{m\in \mathbb{M}_{j}} 2^{-j\left(2s(2^{-jm})+n+\frac{2n}{q^{+}}\right)}2^{2jnb}   \\
& \leq C \sum_{j=0}^{\infty} 2^{jn}2^{-j\left(2s(2^{-jm})+n+\frac{2n}{q^{+}}\right)}2^{2jnb} = C \sum_{j=0}^{\infty}2^{-jnc_{1}}
\end{align*}
and
\begin{align*}
S_{2} & \leq C \sum_{j=0}^{\infty}\sum_{G\in G^{j}}\sum_{m\in \mathbb{M}_{j}} |\gamma_{Gm}^{j}|^{2-\theta}
\|\tilde{\Psi}_{Gm}^{j}\|_{L^{\infty}}^{2-\theta} |\gamma_{Gm}^{j}|^{\theta} 2^{jna\theta}  \\
& \leq C \sum_{j=0}^{\infty}2^{-j\left(s(2^{-jm})-\frac{n}{q^{+}}\right)}2^{j2nb}2^{-j\theta (b-a)n}
= C \sum_{j=0}^{\infty}2^{-jnc_{2}},
\end{align*}
where
\begin{align*}
c_{1} = \frac{2s(2^{-jm})}{n} - \frac{2}{q^{+}} - 2b > 0,
\end{align*}
and
\begin{align*}
c_{2} = \frac{2s(2^{-jm})}{n} - \frac{2}{q^{+}} - 2b - \theta (a-b) > 0.
\end{align*}
We need $c_{1}>0$ and $c_{2}>0$ and by our assumption $a > b$, we only need $c_{2} > 0$.
Our assumption $\inf_{x\in \mathbb{T}^{n}} s(x) > n \left( b + \frac{1}{q^{+}} + \frac{1}{2}\theta (a-b) \right)$
just assure $c_{2} > 0$. So by Theorem \ref{kolmogorov_test}, we can conclude our proof.
\end{proof}
\begin{remark}
If let the mean function is nonzero and satisfies
\begin{align*}
|m_{0}(x)| & \leq C, \quad x\in D,    \\
|m_{0}(x)-m_{0}(y)| & \leq C |x-y|^{\alpha}, \quad x,y \in D.
\end{align*}
Then the result of Theorem \ref{holder_continuity_theorem} still holds.
\end{remark}

\begin{theorem}\label{besov_continuity}
Assume that $u$ is given by (\ref{full_series}) and (\ref{coefficient_de}) with $\xi_{Gm}^{j}$ drawn from a centered generalized
$q(\cdot)$-exponential distribution. Suppose also that $\tilde{\Psi}_{Gm}^{j}$, with $\{j=0,1,\cdots,\infty, G \in G^{j}, m\in \mathbb{M}_{j}\}$ form
a basis for $B^{t(\cdot)}_{q(\cdot)}$ with $t^{-} > 0$, $q(\cdot) \in \mathcal{P}^{log}(\mathbb{T}^{n})$ and
$t \in C_{loc}^{log}(\mathbb{T}^{n}) \cap L^{\infty}(\mathbb{T}^{n})$.
Then for any $$\sup_{x\in \mathbb{T}^{n}} \left( t(x) - s(x) + \frac{n}{q^{+}} \right) < 0,$$
we have $u\in C^{t(\cdot)}(\mathbb{T}^{n})$ $\mathbb{P}$-almost surely.
\end{theorem}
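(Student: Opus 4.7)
The plan is to reduce the claim to finiteness of the wavelet modular $\rho_{B^{t(\cdot)}_{q(\cdot)}}(u)$ established in the previous result, and then invoke an embedding from variable-index Besov spaces into variable-index H\"older spaces. The starting point is Theorem \ref{convergence_series_if_only_if}: its condition (3) coincides verbatim with the present hypothesis $\sup_{x}(t(x)-s(x)+n/q^{+}) < 0$, so the equivalent condition (1) applies and $\rho_{B^{t(\cdot)}_{q(\cdot)}}(u) < \infty$ holds $\mathbb{P}$-almost surely.

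Under the standing assumption that $\{\tilde{\Psi}^{j}_{Gm}\}$ is a basis of $B^{t(\cdot)}_{q(\cdot)}$, the wavelet characterization in Theorem \ref{kong_jian_wavelet} converts the finite modular into a finite Luxemburg norm, so $u\in B^{t(\cdot)}_{q(\cdot)}(\mathbb{T}^{n})$ almost surely. Since the hypothesis is a strict inequality and $s,t$ are continuous on the compact torus, there exists $\eta>0$ with $\sup_x(t(x)+\eta-s(x)+n/q^{+}) < 0$ still; applying the same argument with $t+\eta$ in place of $t$ upgrades the conclusion to $u \in B^{t(\cdot)+\eta}_{q(\cdot)}$ almost surely. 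The final step is a variable-index Sobolev-type embedding $B^{t(\cdot)+\eta}_{q(\cdot)}(\mathbb{T}^{n}) \hookrightarrow C^{t(\cdot)}(\mathbb{T}^{n})$, where $C^{t(\cdot)}$ is to be identified with the variable H\"older-Zygmund space $B^{t(\cdot)}_{\infty,\infty}$.

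The main obstacle is precisely this last embedding. In the constant-index case the required fact reads $B^{s}_{q,q} \hookrightarrow C^{s-n/q}$ whenever $s>n/q$, so one typically needs $\eta$ to exceed $n/q^{+}$, which may be larger than the gap afforded by the strict hypothesis; a careful inspection of variable-index embeddings (in the spirit of Almeida--H\"ast\"o or Kempka, on which the paper already relies) is needed. A cleaner alternative, which I would pursue if the embedding route is delicate, is to mimic the chaining strategy of Theorem \ref{holder_continuity_theorem}: using the H\"older regularity (\ref{var_holder_basis}) of the wavelet basis, one estimates $|u(x)-u(y)|$ level by level, with the generalized $q(\cdot)$-exponential tails of $\xi^{j}_{Gm}$ controlling the partial sums and the decay $\gamma^{j}_{Gm}\sim 2^{-j(s(2^{-j}m)+n/2-n/q^{+})}$ balanced against the oscillation $2^{jna}|x-y|^{\alpha}$. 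Truncating at the dyadic scale where these two trends cross should yield a pointwise H\"older exponent $t(x)$, delivering $u\in C^{t(\cdot)}(\mathbb{T}^{n})$ directly without invoking an abstract embedding.
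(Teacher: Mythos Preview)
You have correctly located the obstruction in your primary route: the embedding $B^{t(\cdot)+\eta}_{q(\cdot)}\hookrightarrow C^{t(\cdot)}=B^{t(\cdot)}_{\infty,\infty}$ costs, in the usual heuristic, a smoothness loss of order $n/q^{-}$, whereas the hypothesis only supplies a gap $\eta<\inf_x\bigl(s(x)-t(x)\bigr)-n/q^{+}$, which need not exceed $n/q^{-}$. So the argument as written does not close, and your fallback chaining sketch, while plausible, is not developed.

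The paper's fix is different and short: instead of pushing the smoothness index up, push the \emph{integrability} index up. For every integer $k\ge 1$ one computes directly
\[
\mathbb{E}\bigl(\rho_{B^{t(\cdot)}_{kq(\cdot)}}(u)\bigr)
\;\le\; C_{\delta,k}\sum_{j\ge 0} 2^{\,jk q^{-}\sup_x(t(x)-s(x)+n/q^{+})}<\infty,
\]
using only that the $kq(x)$-moments of $\xi^{j}_{Gm}$ are uniformly bounded (true for the generalized $q(\cdot)$-exponential law). Hence $u\in B^{t(\cdot)}_{kq(\cdot)}$ almost surely for every $k$. Now the Sobolev-type embedding $B^{t_{1}(\cdot)}_{kq(\cdot)}\hookrightarrow C^{t(\cdot)}$ needs only $t_{1}(x)>t(x)+n/(kq(x))$; choosing $k$ so large that $n/(kq(x))$ is smaller than the strict gap $s(x)-t(x)-n/q^{+}$, one can pick $t_{1}$ with $t(x)+n/(kq(x))<t_{1}(x)<s(x)-n/q^{+}$ and conclude. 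A secondary issue: you invoke Theorem~\ref{convergence_series_if_only_if}, which carries the extra hypotheses that $\kappa$ is uniform and $t-s+n/q^{+}\ne 0$, neither assumed here; the paper bypasses this by taking the expectation of the modular directly rather than quoting that equivalence.
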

\begin{proof}
For any $k \geq 1$, using the definition of $\rho_{B_{q(\cdot)}^{t(\cdot)}}(u)$, we can write
\begin{align*}
\rho_{B_{kq(\cdot)}^{t(\cdot)}}(u)
& = C_{\delta,m}\int_{\mathbb{T}^{n}}\sum_{j=0}^{\infty}\sum_{G\in G^{j}}\sum_{m\in \mathbb{M}_{j}}
2^{jkq(x)\left( t(2^{-j}m) - s(2^{-j}m) + \frac{n}{q^{+}} \right)} |\xi_{Gm}^{j}|^{kq(x)} \chi_{jm}(x) dx.
\end{align*}
For every $k \in \mathbb{N}$ there exists constants $c_{m}, C_{m}$ such that
$0<c_{m}\leq \mathbb{E}\left( |\xi_{Gm}^{j}|^{q(x)} \right)\leq C_{m} < \infty$.
Since each term of the above series is measurable we can swap the sum and the integration and obtain
\begin{align*}
\mathbb{E}\left(\rho_{B_{kq(\cdot)}^{t(\cdot)}}(u)\right) \leq C_{\delta,m}
\sum_{j=0}^{\infty} 2^{jq(x)q^{-}\left( t(2^{-j}m) - s(2^{-j}m) + \frac{n}{q^{+}} \right)} < \infty.
\end{align*}
From the above inequality, we obtain that $\rho_{B_{kq(\cdot)}^{t(\cdot)}}(u) < \infty$ $\mathbb{P}$-almost surely.
So we know that $\|u\|_{B_{kq(\cdot)}^{t(\cdot)}(\mathbb{T}^{n})} < \infty$ $\mathbb{P}$-almost surely.
Since
$$\sup_{x\in \mathbb{T}^{n}} \left( t(x) - s(x) + \frac{n}{q^{+}} \right) < 0,$$
we can choose $k$ large enough so that $\frac{n}{kq(x)} < s(x) - \frac{n}{q(x)} - t(x)$.
Then  the embedding $B_{kq(\cdot)}^{t_{1}(\cdot)} \hookrightarrow C^{t(\cdot)}(\mathbb{T}^{n})$ \cite{Almeida_Hasto}
for any $t_{1}$ satisfying $t(x) + \frac{n}{kq(x)} < t_{1}(x) < s(x) - \frac{n}{q(x)}$ implies that
$\|u\|_{C^{t(\cdot)}(\mathbb{T}^{n})} < \infty$ $\mathbb{P}$-almost surely.
It follows that $u \in C^{t(\cdot)}(\mathbb{T}^{n})$ $\mathbb{P}$-almost surely.
\end{proof}
\begin{remark}
If the mean function $m_{0}$ belongs to $C^{t(\cdot)}(\mathbb{T}^{n})$, the result of the above theorem holds
for a random series with nonzero mean function as well.
\end{remark}


\section{Bayesian approach to inverse problems}\label{bayesion_section}

In this section, we generalize results in \cite{inverse_fluid_equation} and \cite{Besov_prior} to our setting.
Let $X$, $Y$ be separable Banach spaces, equipped with the Borel $\sigma$-algebra, and $\mathcal{G}: X\rightarrow Y$ a
measurable mapping. We wish to solve the inverse problem of finding $u$ from $y$ where
\begin{align}\label{bayesian_basic_equation}
y = \mathcal{G}(u) + \eta
\end{align}
and $\eta \in Y$ denotes noise.
Employing Bayesian approach to this problem, we let $(u,y) \in X \times Y$ be a random variable and compute $u|y$.
We specify the random variable $(u,y)$ as follows:
\begin{itemize}
  \item \textbf{Prior}: $u \sim \mu_{0}$ measure on $X$.
  \item \textbf{Noise}: $\eta \sim \mathbb{Q}_{0}$ measure on $Y$, and $\eta \perp u$.
\end{itemize}

The random variable $y|u$ is then distributed according to the measure $\mathbb{Q}_{u}$, the translate of $\mathbb{Q}_{0}$
by $\mathcal{G}(u)$. We assume throughout the following that $\mathbb{Q}_{u} \ll \mathbb{Q}_{0}$ for $u$ $\mu_{0}$-a.s.
Thus we may define some \textbf{potential} $\Phi : X\times Y \rightarrow \mathbb{R}$ so that
\begin{align}\label{bayesian_potential}
\frac{d\mathbb{Q}_{u}}{d\mathbb{Q}_{0}}(u) = \exp\left( -\Phi(u;y) \right),
\end{align}
and
\begin{align}\label{bayesian_uni}
\int_{Y} \exp\left( -\Phi(u;y) \right) \mathbb{Q}_{0}(dy) = 1.
\end{align}

In the previous section, we construct probability measure $\mu_{0}$ which are supported on a given variable order
Besov space $B_{q(\cdot)}^{t(\cdot)}$. Following, we show how to use of such priors $\mu_{0}$ may be combined
with properties of $\Phi$, defined above, to deduce the existence of a well-posed Bayesian inverse problem.
We firstly list the following conditions of $\Phi$:

\textbf{Assumptions 1}: Let $X$ and $Y$ be Banach spaces.
The function $\Phi : X\times Y \rightarrow \mathbb{R}$ satisfies:

(i) there is an $\alpha_{1} > 0$ and for every $r > 0$, an $M \in \mathbb{R}$, such that for all $u \in X$, and
for all $y \in Y$ such that $\|y\|_{Y} < r$,
\begin{align*}
\Phi(u;y) \geq M - \alpha_{1}\|u\|_{X};
\end{align*}

(ii) for every $r > 0$ there exists $K = K(r) > 0$ such that for all $u \in X$, $y\in Y$ with $\max\{ \|u\|_{X}, \|y\|_{Y} \} < r$
\begin{align*}
\Phi(u;y) \leq K;
\end{align*}

(iii) for every $r > 0$ there exists $L = L(r) > 0$ such that for all $u_{1},u_{2} \in X$ and $y \in Y$ with
$\max\{ \|u_{1}\|_{X}, \|u_{2}\|_{X}, \|y\|_{Y} \} < r$
\begin{align*}
|\Phi(u_{1};y)-\Phi(u_{2};y)| \leq L \|u_{1}-u_{2}\|_{X};
\end{align*}

(iv) there is an $\alpha_{2} > 0$ and for every $r > 0$ a $C \in \mathbb{R}$ such that for all $y_{1},y_{2} \in Y$ with
$\max\{ \|y_{1}\|_{Y},\|y_{2}\|_{Y} \} < r$ and for every $u \in X$
\begin{align*}
|\Phi(u,y_{1}) - \Phi(u,y_{2})| \leq \exp\left( \alpha_{2}\|u\|_{X} + C \right) \|y_{1} - y_{2}\|_{Y}.
\end{align*}

Now we can give the following theorem for well-defined problem.
\begin{theorem}\label{bayesian_well_defined}
Let $\Phi$ satisfy (\ref{bayesian_uni}) and Assumptions 1 (i)-(iii). Suppose that for some
$q \in \mathcal{P}^{log}(\mathbb{T}^{n})$, $t \in C_{loc}^{log}(\mathbb{T}^{n})\cap L^{\infty}(\mathbb{T}^{n})$, $B_{q(\cdot)}^{t(\cdot)}$ is
continuously embedded in $X$. There exists $\delta^{*} > 0$ such that if $\mu_{0}$ is a $(\delta, B^{s(\cdot)}_{q(\cdot)})$
measure with $$\sup_{x \in \mathbb{T}^{n}} \left( t(x)-s(x)+\frac{n}{q^{+}} \right) < 0$$ and $\delta > \delta^{*}$,
then $\mu^{y}$ is absolutely continuous with respect to $\mu_{0}$ and satisfies
\begin{align}\label{bayesian_absolute}
\frac{d\mu^{y}}{d\mu_{0}}(u) = \frac{1}{Z(y)} \exp\left( -\Phi(u;y) \right)
\end{align}
with the normalizing factor
\begin{align}\label{bayesian_normalize}
Z(y) = \int_{X} \exp\left( -\Phi(u;y) \right) \mu_{0}(du) < \infty.
\end{align}
The constant $\delta^{*} = 2 \max\{c_{e}^{q^{-}}, c_{e}^{q^{+}}\} \alpha_{1}$, where $c_{e}$ is the embedding constant satisfying
$\|u\|_{X} \leq c_{e} \|u\|_{B_{q(\cdot)}^{t(\cdot)}}$.
\end{theorem}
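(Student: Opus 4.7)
The strategy splits into three standard steps: (i) prove $Z(y)<\infty$, (ii) prove $Z(y)>0$, and (iii) invoke the infinite-dimensional Bayes formula to identify $\mu^y$. For step (i), Assumption 1(i) yields $\exp(-\Phi(u;y))\leq e^{-M}\exp(\alpha_{1}\|u\|_{X})$ whenever $\|y\|_{Y}<r$, and the continuous embedding $\|u\|_{X}\leq c_{e}\|u\|_{B_{q(\cdot)}^{t(\cdot)}}$ reduces the question to bounding the integral of $\exp(\alpha_{1}c_{e}\|u\|_{B_{q(\cdot)}^{t(\cdot)}})$ against $\mu_{0}$. I would then pass from the Luxemburg norm to the modular $\rho_{B_{q(\cdot)}^{t(\cdot)}}$ by means of the standard variable-exponent comparison $\|u\|_{\rho}\leq \max(\rho(u)^{1/q^{-}},\rho(u)^{1/q^{+}})\leq \rho(u)^{1/q^{-}}+\rho(u)^{1/q^{+}}$ together with the elementary bound $y\leq 1+y^{q^{\pm}}$, in order to extract a linear-in-$\rho$ estimate of the form $\alpha_{1}c_{e}\|u\|_{\rho}\leq (\delta/2)\rho_{B_{q(\cdot)}^{t(\cdot)}}(u)+C$ valid for every $\delta>\delta^{*}=2\max\{c_{e}^{q^{-}},c_{e}^{q^{+}}\}\alpha_{1}$. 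Theorem \ref{convergence_series_if_only_if}(2) then converts this bound into the finiteness of $\int_{X}\exp(\alpha_{1}c_{e}\|u\|_{B_{q(\cdot)}^{t(\cdot)}})\mu_{0}(du)$.

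For step (ii), fix any $r>\|y\|_{Y}$. Assumption 1(ii) gives $\Phi(u;y)\leq K(r)$ on $B_{r}:=\{u:\|u\|_{X}<r\}$, whence $Z(y)\geq e^{-K(r)}\mu_{0}(B_{r})$. Theorem \ref{besov_continuity} combined with the embedding $B_{q(\cdot)}^{t(\cdot)}\hookrightarrow X$ shows that $\mu_{0}$-almost every $u$ belongs to $X$ with finite norm, so $\mu_{0}(B_{r})\to 1$ as $r\to\infty$; in particular $\mu_{0}(B_{r})>0$ for $r$ large.

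For step (iii), once $Z(y)\in(0,\infty)$ is established, the Radon--Nikodym identity (\ref{bayesian_absolute}) follows from the general function-space Bayes formula: by (\ref{bayesian_potential}) the joint law of $(u,y)$ on $X\times Y$ factorises as $\mathbb{Q}_{u}(dy)\mu_{0}(du)=\exp(-\Phi(u;y))\mathbb{Q}_{0}(dy)\mu_{0}(du)$, and disintegrating along $y$ (i.e.\ dividing by the $y$-marginal $Z(y)$) yields exactly (\ref{bayesian_absolute}). This step is a direct adaptation of the argument carried out for the standard Besov prior in \cite{acta_numerica,Besov_prior}.

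The principal obstacle is the variable-exponent arithmetic of step (i): unlike the constant-exponent setting, where $\rho(u)$ is essentially a power of the norm and the estimate is one line, here the modular and the Luxemburg norm obey only the two-sided comparison with $\rho(u)^{1/q^{\pm}}$, which must be handled by case analysis on whether $\|u\|_{\rho}$ is larger or smaller than $1$. It is precisely this split that forces the appearance of both powers $c_{e}^{q^{-}}$ and $c_{e}^{q^{+}}$ in the threshold $\delta^{*}$; once the passage from norm to modular is executed carefully, the remaining arguments are routine applications of the Fernique-type estimate in Theorem \ref{convergence_series_if_only_if} and of the abstract Bayes formula.
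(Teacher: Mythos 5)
Your proposal is correct and follows essentially the same route as the paper: upper-bound $Z(y)$ via Assumption 1(i), the embedding, the norm--modular comparison (forcing the two exponents $c_{e}^{q^{-}},c_{e}^{q^{+}}$) and the Fernique-type Theorem \ref{convergence_series_if_only_if}; lower-bound $Z(y)$ via Assumption 1(ii) on a bounded set of positive $\mu_{0}$-measure; and conclude with the abstract Bayes/disintegration theorem as in \cite{acta_numerica}. The only (immaterial) differences are that you split cases on the modular rather than on $\|u\|_{X}$ and take the positivity set $\{\|u\|_{X}<r\}$ instead of $\{\rho_{B_{q(\cdot)}^{t(\cdot)}}(u)<\mathbb{E}\rho\}$; just note that Assumption 1(iii) is what supplies the $\mu_{0}$-measurability of $\Phi$ needed in your disintegration step.
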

\begin{proof}
Define $\pi_{0}(du,dy) = \mu_{0}(du)\otimes \mathbb{Q}_{0}(dy)$ and $\pi(du,dy) = \mu_{0}(du)\mathbb{Q}_{u}(dy)$.
Assumption 1 (iii) gives continuity of $\Phi$ on $X$ and since $\mu_{0}(X) = 1$ we have that $\Phi : X \rightarrow \mathbb{R}$
is $\mu_{0}$-measurable. Therefore $\pi \ll \pi_{0}$ and $\pi$ has Rando-Nikodym derivative given by (\ref{bayesian_potential}).
This then by Theorem 6.29 of \cite{acta_numerica}, implies that $\mu^{y}(du)$ is absolutely continuous with respect to $\mu_{0}(du)$.
This same lemma also gives (\ref{bayesian_absolute}) provided that the normalization constant (\ref{bayesian_normalize}) is positive, which
we now establish. Since $\mu_{0}(B^{t(\cdot)}_{q(\cdot)}) = 1$, we note that all the integrals over $X$ may be replaced by integrals over
$B^{t(\cdot)}_{q(\cdot)}$ for any $\sup_{x\in \mathbb{T}^{n}} \left( t(x)-s(x)+\frac{n}{q^{+}} \right) < 0$.
By Assumption 1 (i), we note that there is $M = M(y)$ such that
\begin{align*}
Z(y) & = \int_{B^{t(\cdot)}_{q(\cdot)}} \exp\left( -\Phi(u;y) \right) \mu_{0}(du) \\
& \leq \int_{B^{t(\cdot)}_{q(\cdot)}} \exp\left( \alpha_{1}\|u\|_{X} - M \right) \mu_{0}(du)
\end{align*}
By Lemma 3.2.5 of \cite{variable_book}, we know that
\begin{align*}
\|u\|_{X}^{q^{-}} \leq c_{e}^{q^{-}} \rho_{B_{q(\cdot)}^{t(\cdot)}}(u)
\end{align*}
when $\rho_{B_{q(\cdot)}^{t(\cdot)}}(u) \geq 1$ or
\begin{align*}
\|u\|_{X}^{q^{+}} \leq c_{e}^{q^{+}} \rho_{B_{q(\cdot)}^{t(\cdot)}}(u)
\end{align*}
when $\rho_{B_{q(\cdot)}^{t(\cdot)}}(u) < 1$.
Hence, if $\|u\|_{X} \geq 1$ then we have
\begin{align*}
Z(y) & = \int_{B^{t(\cdot)}_{q(\cdot)}} \exp\left( -\Phi(u;y) \right) \mu_{0}(du) \\
& \leq \int_{B^{t(\cdot)}_{q(\cdot)}} \exp\left( \alpha_{1}\max\{c_{e}^{q^{-}}, c_{e}^{q^{+}}\}
\rho_{B_{q(\cdot)}^{t(\cdot)}}(u) - M \right) \mu_{0}(du).
\end{align*}
This upper bound is finite by Theorem \ref{convergence_series_if_only_if} since $\delta > 2 \alpha_{1}\max\{c_{e}^{q^{-}}, c_{e}^{q^{+}}\}$.
For $\|u\|_{X} \leq 1$, we have
\begin{align*}
Z(y) & = \int_{B^{t(\cdot)}_{q(\cdot)}} \exp\left( -\Phi(u;y) \right) \mu_{0}(du) \\
& \leq \int_{B^{t(\cdot)}_{q(\cdot)}} \exp(\alpha_{1}-M)\mu_{0}(du) < \infty.
\end{align*}
Now, we prove that the normalisation constant does not vanish.
Let $R = \mathbb{E}(\rho_{B_{q(\cdot)}^{t(\cdot)}}(u))$, we know that $R \in (0, \infty)$.
As $\rho_{B_{q(\cdot)}^{t(\cdot)}}(u)$ is a nonegative random variable, we obtain that
$\mu_{0}(\rho_{B_{q(\cdot)}^{t(\cdot)}}(u) < R) > 0$. Taking $r = \max\{\|y\|_{Y},R\}$, Assumption 1 (ii) ensure that
\begin{align*}
Z(y) & = \int_{B^{t(\cdot)}_{q(\cdot)}} \exp(-\Phi(u;y)) \mu_{0}(du)  \\
& \geq \int_{\tilde{b}^{t(\cdot)}_{q(\cdot)} < R} \exp(-K) \mu_{0}(du)  \\
& = \exp(-K)\mu_{0}(\rho_{B_{q(\cdot)}^{t(\cdot)}}(u) < R)
\end{align*}
which is positive.
\end{proof}

We now show the well-posedness of the posterior measure $\mu^{y}$ with respect to the data $y$.
Recall that the Hellinger metric $d_{Hell}$ is defined by
\begin{align*}
d_{Hell}(\mu, \mu') = \sqrt{\frac{1}{2} \int \left( \sqrt{\frac{d\mu}{d\nu}} - \sqrt{\frac{d\mu'}{d\nu}} \right)^{2} d\nu},
\end{align*}
where $\nu$ is reference measure and with respect to which both $\mu$ and $\mu'$ are absolutely continuous.
The Hellinger metric is independent of the choice of the reference measure $\nu$.
For a review of probability metrics we refer to \cite{probability_metric}.
The following theorem can be proved by using similar ideas from Theorem 3.3 of \cite{Besov_prior}.
The minor differences are that we need to use $\rho_{B_{q(\cdot)}^{t(\cdot)}}(u)$ instead of $B^{t(\cdot)}_{q(\cdot)}$
when we need to bound $\|u\|_{X}$. The same situation appeared in the proof of Theorem \ref{bayesian_well_defined},
so we here omit the details of the proof for concisely.

\begin{theorem}\label{bayesian_well_posedness}
Let $\Phi$ satisfy (\ref{bayesian_uni}) and Assumptions 1 (i)-(iv). Suppose that for some
$q \in \mathcal{P}^{log}(\mathbb{T}^{n})$, $t \in C_{loc}^{log}(\mathbb{T}^{n}) \cap L^{\infty}(\mathbb{T}^{n})$, $B_{q(\cdot)}^{t(\cdot)}$ is
continuously embedded in $X$. There exists $\delta^{*} > 0$ such that if $\mu_{0}$ is a $(\delta, B^{s(\cdot)}_{q(\cdot)})$
measure with $$\sup_{x \in \mathbb{T}^{n}} \left( t(x)-s(x)+\frac{n}{q^{+}} \right) < 0$$ and $\delta > \delta^{*}$, then
\begin{align*}
d_{Hell}(\mu^{y}, \mu^{y'}) \leq C \|y-y'\|_{Y}
\end{align*}
where $C = C(r)$ with $\max\{ \|y\|_{Y}, \|y'\|_{Y} \} \leq r$.
The constant $\delta^{*} = 2 \max\{c_{e}^{q^{-}}, c_{e}^{q^{+}}\} (\alpha_{1} + 2\alpha_{2})$,
where $c_{e}$ is the embedding constant satisfying $\|u\|_{X} \leq c_{e} \|u\|_{B^{t(\cdot)}_{q(\cdot)}}$.
\end{theorem}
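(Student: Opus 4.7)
The plan is to mirror the proof of Theorem 3.3 in \cite{Besov_prior}, with the single structural adjustment that every occurrence of $\|u\|_{B^t_q}^q$ must be replaced by an appropriate bound in terms of the modular $\rho_{B_{q(\cdot)}^{t(\cdot)}}(u)$, using the splitting on $\{\|u\|_X \geq 1\}$ and $\{\|u\|_X < 1\}$ exactly as in the proof of Theorem \ref{bayesian_well_defined}. The starting point is the standard decomposition
\begin{align*}
2\, d_{Hell}(\mu^y, \mu^{y'})^2 \leq I_1 + I_2,
\end{align*}
where
\begin{align*}
I_1 &= \frac{2}{Z(y)} \int_X \bigl(e^{-\Phi(u;y)/2} - e^{-\Phi(u;y')/2}\bigr)^2 \mu_0(du), \\
I_2 &= 2\bigl|Z(y)^{-1/2} - Z(y')^{-1/2}\bigr|^2 \int_X e^{-\Phi(u;y')} \mu_0(du),
\end{align*}
obtained by inserting and subtracting $Z(y)^{-1/2} e^{-\Phi(u;y')/2}$ under the square in $d_{Hell}^2$ and applying the elementary inequality $(a+b)^2 \leq 2(a^2+b^2)$.

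For $I_1$, I would use $|e^{-a/2}-e^{-b/2}|^2 \leq \tfrac{1}{4}(e^{-a}+e^{-b})|a-b|^2$, then Assumption 1 (iv) to replace $|\Phi(u;y)-\Phi(u;y')|$ by $\exp(\alpha_2\|u\|_X + C)\|y-y'\|_Y$, and Assumption 1 (i) to bound $e^{-\Phi(u;y)} \leq e^{\alpha_1\|u\|_X - M}$. This reduces matters to finiteness of
\begin{align*}
\int_X \exp\bigl((\alpha_1 + 2\alpha_2)\|u\|_X\bigr)\,\mu_0(du).
\end{align*}
On $\{\|u\|_X \geq 1\}$, since $q^- \geq 1$ we have $\|u\|_X \leq \|u\|_X^{q^-} \leq c_e^{q^-} \rho_{B_{q(\cdot)}^{t(\cdot)}}(u)$ by the continuous embedding $B_{q(\cdot)}^{t(\cdot)} \hookrightarrow X$ combined with Lemma 3.2.5 of \cite{variable_book}, while on $\{\|u\|_X < 1\}$ the integrand is uniformly bounded by $e^{\alpha_1 + 2\alpha_2}$. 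The hypothesis $\delta > 2\max\{c_e^{q^-}, c_e^{q^+}\}(\alpha_1 + 2\alpha_2)$ ensures that $(\alpha_1+2\alpha_2)\max\{c_e^{q^-},c_e^{q^+}\} < \delta/2$, so Theorem \ref{convergence_series_if_only_if} (part (2)) delivers exponential integrability of $\rho_{B_{q(\cdot)}^{t(\cdot)}}(u)$ and hence finiteness of the above integral.

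For $I_2$, the difference of square roots is controlled via the mean value theorem by $\tfrac{1}{2}\min(Z(y),Z(y'))^{-3/2}|Z(y)-Z(y')|$; the denominator remains uniformly positive for $\|y\|_Y, \|y'\|_Y \leq r$ thanks to the lower bound $Z(y) \geq e^{-K(r)}\mu_0(\rho_{B_{q(\cdot)}^{t(\cdot)}}(u) < R)$ established in Theorem \ref{bayesian_well_defined}. The numerator is then estimated by integrating $|\Phi(u;y)-\Phi(u;y')|\,e^{-\min(\Phi(u;y),\Phi(u;y'))}$ against $\mu_0$, which after invoking Assumptions 1 (i) and (iv) reduces to the same exponential moment already bounded in the treatment of $I_1$. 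Combining both bounds yields the desired Lipschitz estimate $d_{Hell}(\mu^y,\mu^{y'}) \leq C(r)\|y-y'\|_Y$.

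The principal obstacle will not be any single inequality but the bookkeeping required to extract precisely $\delta^* = 2\max\{c_e^{q^-}, c_e^{q^+}\}(\alpha_1 + 2\alpha_2)$. The factor $2\alpha_2$ arises because the Hellinger squaring doubles the exponent produced by Assumption 1 (iv), while the factor $\max\{c_e^{q^-}, c_e^{q^+}\}$ compensates for the norm-versus-modular discrepancy inherent to the variable-exponent setting; these are exactly the two quantitative points at which the variable-index argument diverges from the constant-index proof in \cite{Besov_prior}.
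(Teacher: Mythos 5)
Your proposal is correct and is essentially the proof the paper intends: the paper itself omits the details, stating only that one follows Theorem 3.3 of \cite{Besov_prior} while replacing norm bounds on $\|u\|_{X}$ by modular bounds via $\rho_{B_{q(\cdot)}^{t(\cdot)}}(u)$ exactly as in Theorem \ref{bayesian_well_defined}, which is precisely the decomposition, exponential-moment estimate via Theorem \ref{convergence_series_if_only_if}, and bookkeeping for $\delta^{*}=2\max\{c_{e}^{q^{-}},c_{e}^{q^{+}}\}(\alpha_{1}+2\alpha_{2})$ that you carry out. No substantive deviation or gap.
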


In the last part of this section, we consider the approximation of the posterior.
Consider $\Phi^{N}$ to be an approximation of $\Phi$. Here we state a result which quantifies the effect of this approximation
in the posterior measure in terms of the approximation error in $\Phi$.

Define $\mu^{y,N}$ by
\begin{align}\label{bayesian_approximation}
\frac{d\mu^{y,N}}{d\mu_{0}}(u) = \frac{1}{Z^{N}(y)} \exp \left( -\Phi^{N}(u) \right),
\end{align}
and
\begin{align}\label{bayesian_appro_normal}
Z^{N}(y) = \int_{X} \exp\left( -\Phi^{N}(u) \right)\mu_{0}(du).
\end{align}
We suppress the dependence of $\Phi$ and $\Phi^{N}$ on $y$ here as it is considered fixed.

\begin{theorem}\label{bayesian_approximation_theorem}
Assume that the measures $\mu$ and $\mu^{N}$ are both absolutely continuous with respect to $\mu_{0}$, and given by
(\ref{bayesian_absolute}) and (\ref{bayesian_approximation}) respectively. Suppose that $\Phi$ and $\Phi^{N}$ satisfy
Assumption 1 (i) and (ii), uniformly in $N$, and that there exist $\alpha_{3} \geq 0$ and $C \in \mathbb{R}$ such that
\begin{align*}
|\Phi(u) - \Phi^{N}(u)| \leq \exp(\alpha_{3}\|u\|_{X} + C)\varphi(N),
\end{align*}
where $\varphi(N) \rightarrow 0$ as $N \rightarrow \infty$. Suppose that for some
$t \in C_{loc}^{log}(\mathbb{T}^{n})\cap L^{\infty}(\mathbb{T}^{n})$,
$q\in \mathcal{P}^{log}(\mathbb{T}^{n})$, $B^{t(\cdot)}_{q(\cdot)}$ is continuously embedded in $X$.
Let $\mu_{0}$ be a $(\delta, B^{s(\cdot)}_{q(\cdot)})$ measure with
$$\sup_{x\in \mathbb{T}^{n}}\left( t(x) - s(x) + \frac{n}{q^{+}} \right) < 0$$ and
$\delta > 2 \max\{c_{e}^{q^{-}}, c_{e}^{q^{+}}\}(\alpha_{1}+2\alpha_{3})$ where
$c_{e}$ is the embedding constant satisfying $\|u\|_{X} \leq c_{e} \|u\|_{B^{t(\cdot)}_{q(\cdot)}}$.
Then there exists a constant independent of $N$ such that
\begin{align*}
d_{Hell}(\mu, \mu^{N}) \leq C \varphi(N).
\end{align*}
\end{theorem}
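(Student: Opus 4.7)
The plan is to follow the standard template for Hellinger bounds in Bayesian posterior approximation (as in \cite{Besov_prior,acta_numerica}), replacing every appeal to Fernique's theorem by the Fernique-like result of Theorem \ref{convergence_series_if_only_if}(2). First I would take $\mu_{0}$ as the common reference measure and expand
$$2\, d_{Hell}(\mu,\mu^{N})^{2} = \int_{X} \left( Z^{-1/2} e^{-\Phi(u)/2} - (Z^{N})^{-1/2} e^{-\Phi^{N}(u)/2}\right)^{2} \mu_{0}(du).$$
Adding and subtracting $Z^{-1/2} e^{-\Phi^{N}(u)/2}$ inside the square and applying $(a+b)^{2} \leq 2a^{2} + 2b^{2}$ splits this into two pieces: $I_{1}$, controlled by the pointwise difference $|\Phi - \Phi^{N}|$, and $I_{2}$, controlled by $|Z^{-1/2} - (Z^{N})^{-1/2}|^{2}$ multiplied by $\int e^{-\Phi^{N}} d\mu_{0}$.

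For $I_{1}$, I would use the mean value estimate $|e^{-a/2}-e^{-b/2}| \leq \tfrac{1}{2}\max(e^{-a/2},e^{-b/2})|a-b|$, together with Assumption 1(i) (holding uniformly in $N$, so that $e^{-\Phi},e^{-\Phi^{N}} \leq e^{\alpha_{1}\|u\|_{X} - M}$) and the hypothesis $|\Phi - \Phi^{N}| \leq \exp(\alpha_{3}\|u\|_{X} + C)\varphi(N)$, to dominate the integrand by a constant times $\varphi(N)^{2} \exp((\alpha_{1}+2\alpha_{3})\|u\|_{X})$. For $I_{2}$, I would note
$$|Z - Z^{N}| \leq \int_{X} \left|e^{-\Phi(u)} - e^{-\Phi^{N}(u)}\right|\mu_{0}(du) \leq C\,\varphi(N) \int_{X} e^{(\alpha_{1}+\alpha_{3})\|u\|_{X}}\mu_{0}(du),$$
and then apply $|Z^{-1/2}-(Z^{N})^{-1/2}| \leq \tfrac{1}{2}\min(Z,Z^{N})^{-3/2}|Z-Z^{N}|$ to transfer this $\varphi(N)$ into a bound on $I_{2}$.

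To close the estimate, I would mimic the end of the proof of Theorem \ref{bayesian_well_defined}: by the embedding $B^{t(\cdot)}_{q(\cdot)} \hookrightarrow X$ and Lemma 3.2.5 of \cite{variable_book}, the norm $\|u\|_{X}$ is controlled by the modular $\rho_{B^{t(\cdot)}_{q(\cdot)}}(u)$ in both regimes $\rho \geq 1$ and $\rho < 1$ with the constant $\max\{c_{e}^{q^{-}},c_{e}^{q^{+}}\}$. The hypothesis $\delta > 2\max\{c_{e}^{q^{-}},c_{e}^{q^{+}}\}(\alpha_{1}+2\alpha_{3})$ is precisely what makes Theorem \ref{convergence_series_if_only_if}(2) produce a finite expectation of $\exp((\alpha_{1}+2\alpha_{3})\|u\|_{X})$ under $\mu_{0}$, which dominates both the $I_{1}$-integral and the integrals appearing in $I_{2}$ (since $\alpha_{1}+\alpha_{3} \leq \alpha_{1}+2\alpha_{3}$). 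The uniform lower bound $Z, Z^{N} \geq c_{0} > 0$ follows from Assumption 1(ii) by the construction used at the end of the proof of Theorem \ref{bayesian_well_defined}: taking $R = \mathbb{E}(\rho_{B^{t(\cdot)}_{q(\cdot)}}(u)) \in (0,\infty)$ one has $\mu_{0}(\rho_{B^{t(\cdot)}_{q(\cdot)}}(u) < R) > 0$, so $Z, Z^{N} \geq e^{-K(R)}\mu_{0}(\rho < R)$ independently of $N$.

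The main obstacle is the bookkeeping through the variable-exponent modular inequality, which is responsible for the awkward factor $\max\{c_{e}^{q^{-}},c_{e}^{q^{+}}\}$ in the $\delta$-threshold and for the fact that the exponent $\alpha_{1}+2\alpha_{3}$ must appear in both the integrability requirement and the lower-bound analysis; once this exponential integrability is secured, combining the bounds on $I_{1}$ and $I_{2}$ gives $d_{Hell}(\mu,\mu^{N})^{2} \leq C\varphi(N)^{2}$ with $C$ independent of $N$, as asserted. All other steps are essentially identical to the constant-index Besov prior argument in \cite{Besov_prior}.
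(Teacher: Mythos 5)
Your proposal is correct and follows essentially the same route the paper intends: the paper only sketches this proof, deferring to the standard Hellinger-approximation argument of Theorem 3.3 in \cite{acta_numerica} with the Gaussian Fernique step replaced by the modular bound $\|u\|_{X}\leq \max\{c_{e}^{q^{-}},c_{e}^{q^{+}}\}^{1/q^{\pm}}$-type control and Theorem \ref{convergence_series_if_only_if}(2), exactly as in the proof of Theorem \ref{bayesian_well_defined}. Your decomposition into $I_{1}$, $I_{2}$, the uniform-in-$N$ lower bound on $Z, Z^{N}$ from Assumption 1(ii), and the use of $\delta > 2\max\{c_{e}^{q^{-}},c_{e}^{q^{+}}\}(\alpha_{1}+2\alpha_{3})$ to secure $\mathbb{E}\exp((\alpha_{1}+2\alpha_{3})\|u\|_{X})<\infty$ is precisely the intended filling-in of that sketch.
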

The proof of the above theorem similar to the proof of Theorem 3.3 of \cite{acta_numerica}, the differences
and difficulties can be overcome by same idea used in the proof of Theorem \ref{bayesian_well_defined}.


\section{Variational methods}\label{variational_section}

MAP estimator in the Bayesian statistics literature \cite{book_comp_bayeisn} is an important concept,
it specifies the relationship between Bayesian approach and classical regularization technique.
It is well known that for non-gaussian prior we can hardly obtain a rigorous relation between prior measure and
regularization term in infinite dimensions. Even for a simpler constant index Besov prior there is no complete theory \cite{Besov_prior}.
Only recently, Tapio Helin and Martin Burger \cite{MAP_Besov} addressed this issue in some sense.
Here, as stated in Remark \ref{intuitive_density}, we can get an upper bound for the probability density, and for constant
$q$ case we can get an equality.
So we faced more complex situation compared with constant Besov case
and in this section we just give some partial illustration.

We define the following functional
\begin{align}\label{map_functional}
I(u) = \Phi(u) + \frac{1}{2}\rho_{B_{q(\cdot)}^{s(\cdot)}}(u).
\end{align}
Intuitively, the minimizers of the above functional or some variant of (\ref{map_functional}) may have
highest probability measure when a small balls centred on such minimizers.
For more explanations, we refer to the Gaussian case \cite{MAP_detail}.
For this functional we have
\begin{theorem}\label{convergence}
Let Assumption 1 (i),(ii) hold, assume that $\mu_{0}(X) = 1$.
Let $s\in C_{loc}^{log}(\mathbb{T}^{n})\cap L^{\infty}(\mathbb{T}^{n})$, $q\in \mathcal{P}^{log}(\mathbb{T}^{n})$, $1 < q^{-}\leq q(x) \leq q^{+} < \infty$
and $B^{s(\cdot)}_{q(\cdot)}$ compactly embedded in $X$.
Then there exists $\bar{u} \in B^{s(\cdot)}_{q(\cdot)}$ such that
\begin{align*}
I(\bar{u}) = \bar{I} := \inf\left\{I(u) : u \in B^{s(\cdot)}_{q(\cdot)}\right\}.
\end{align*}
Furthermore, if $\{u_{n}\}$ is a minimizing sequence satisfying $I(u_{n}) \rightarrow I(\bar{u})$
then there is a subsequence $\{u_{n'}\}$ that converges strongly to $\bar{u}$ in $B^{s(\cdot)}_{q(\cdot)}$.
\end{theorem}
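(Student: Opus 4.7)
The plan is to run the direct method of the calculus of variations, exploiting the fact that the modular $\rho_{B_{q(\cdot)}^{s(\cdot)}}$ controls the norm superlinearly when $q^{-}>1$, the compact embedding into $X$, and the convexity of the modular.

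First I would establish coercivity of $I$ on $B^{s(\cdot)}_{q(\cdot)}$. By Assumption 1(i), for $\|y\|_{Y}<r$ one has $\Phi(u;y)\ge M-\alpha_{1}\|u\|_{X}\ge M-\alpha_{1}c_{e}\|u\|_{B^{s(\cdot)}_{q(\cdot)}}$, where $c_{e}$ is the embedding constant of $B^{s(\cdot)}_{q(\cdot)}\hookrightarrow X$. Lemma 3.2.5 of \cite{variable_book} relates the modular and the Luxemburg norm: when $\|u\|_{B^{s(\cdot)}_{q(\cdot)}}\ge 1$ one has $\|u\|^{q^{-}}_{B^{s(\cdot)}_{q(\cdot)}}\le \rho_{B_{q(\cdot)}^{s(\cdot)}}(u)$. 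Since $q^{-}>1$, the modular term dominates the linear lower bound from $\Phi$, so $I(u)\to \infty$ as $\|u\|_{B^{s(\cdot)}_{q(\cdot)}}\to \infty$ and $\bar{I}>-\infty$. Non-emptiness of admissible functions is trivial, e.g.\ $I(0)\le K<\infty$ by Assumption 1(ii) since $\rho_{B_{q(\cdot)}^{s(\cdot)}}(0)=0$.

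Next I would pick a minimizing sequence $\{u_{n}\}\subset B^{s(\cdot)}_{q(\cdot)}$ with $I(u_{n})\to \bar{I}$. Coercivity forces $\{u_{n}\}$ to be bounded in $B^{s(\cdot)}_{q(\cdot)}$. Since $1<q^{-}\le q^{+}<\infty$, the variable exponent Lebesgue spaces and hence (via the wavelet isomorphism of Theorem \ref{kong_jian_wavelet}) the spaces $B^{s(\cdot)}_{q(\cdot)}$ are reflexive, so we may extract a subsequence, not relabelled, with $u_{n}\rightharpoonup \bar{u}$ in $B^{s(\cdot)}_{q(\cdot)}$. The compact embedding hypothesis then gives $u_{n}\to \bar{u}$ strongly in $X$.

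Then I would prove lower semicontinuity. The modular $\rho_{B_{q(\cdot)}^{s(\cdot)}}$ is convex (its integrand $t\mapsto t^{q(x)}$ is convex since $q(x)>1$) and continuous in the norm topology of $B^{s(\cdot)}_{q(\cdot)}$, hence weakly lower semicontinuous by Mazur's lemma, so $\rho_{B_{q(\cdot)}^{s(\cdot)}}(\bar u)\le \liminf_{n}\rho_{B_{q(\cdot)}^{s(\cdot)}}(u_{n})$. For $\Phi$, the local Lipschitz property of $\Phi$ on $X$ (implicit via Assumption 1; if only (i),(ii) are invoked, upgrading $\Phi$ to be at least sequentially lower semicontinuous on $X$ is the key qualitative input) together with strong convergence $u_{n}\to \bar u$ in $X$ yields $\Phi(u_{n})\to \Phi(\bar u)$. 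Combining, $I(\bar u)\le \liminf_{n} I(u_{n})=\bar I$, so $\bar u$ is a minimizer.

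Finally, for strong convergence in $B^{s(\cdot)}_{q(\cdot)}$, I would argue as follows. From $I(u_{n})\to I(\bar u)$ and $\Phi(u_{n})\to \Phi(\bar u)$ it follows that $\rho_{B_{q(\cdot)}^{s(\cdot)}}(u_{n})\to \rho_{B_{q(\cdot)}^{s(\cdot)}}(\bar u)$. Via the relation between modular and Luxemburg norm (again Lemma 3.2.5 of \cite{variable_book}) this yields $\|u_{n}\|_{B^{s(\cdot)}_{q(\cdot)}}\to \|\bar u\|_{B^{s(\cdot)}_{q(\cdot)}}$. Since $1<q^{-}\le q^{+}<\infty$ forces $B^{s(\cdot)}_{q(\cdot)}$ to be uniformly convex through its wavelet-sequence-space realisation, the Radon--Riesz (Kadets--Klee) property turns weak convergence plus norm convergence into strong convergence, which is the claim. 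The main technical obstacle is the last step: verifying reflexivity and, more sharply, uniform convexity (or at least the Kadets--Klee property) for the variable Besov space, which must be pushed through the wavelet isomorphism to the mixed sequence space $\tilde b^{s(\cdot)}_{q(\cdot)}$; once that is granted, every step above is a standard direct-method manipulation.
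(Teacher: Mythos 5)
Your proposal follows the same direct--method skeleton as the paper: boundedness of the minimizing sequence from the modular term, weak compactness from reflexivity, the compact embedding $B^{s(\cdot)}_{q(\cdot)}\hookrightarrow\hookrightarrow X$ to upgrade to strong $X$-convergence, continuity of $\Phi$ along that sequence, and weak lower semicontinuity of $\rho_{B_{q(\cdot)}^{s(\cdot)}}$ (the paper cites Theorem 2.2.8 of \cite{variable_book}; your Mazur argument is the same fact). The genuine divergences are at the two places you treat most casually. First, reflexivity: you assert it ``via the wavelet isomorphism'', but this is exactly the point the paper does not take for granted --- it proves the duality Lemma \ref{reflect}, namely $\bigl(B^{s(\cdot)}_{q(\cdot)}\bigr)'=B^{-s(\cdot)}_{q(\cdot)'}$, as a standalone result of independent interest, and deduces reflexivity from it; your route through uniform convexity of the sequence-space realisation is plausible but is precisely the ``main technical obstacle'' you defer, so in the paper's economy it is the content, not a footnote. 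Second, the final strong convergence: the paper delegates it to the convexity argument of Theorem 2.7 in \cite{inverse_fluid_equation} (a Cauchy-sequence argument for the minimizing sequence exploiting convexity of the regulariser and continuity of $\Phi$), whereas you propose norm convergence plus weak convergence plus Kadets--Klee. That alternative can be made to work, but one intermediate step is wrong as written: Lemma 3.2.5 of \cite{variable_book} only gives the two-sided power bounds between modular and Luxemburg norm, so $\rho_{B_{q(\cdot)}^{s(\cdot)}}(u_{n})\to\rho_{B_{q(\cdot)}^{s(\cdot)}}(\bar u)$ does \emph{not} by itself yield $\|u_{n}\|_{B^{s(\cdot)}_{q(\cdot)}}\to\|\bar u\|_{B^{s(\cdot)}_{q(\cdot)}}$ for genuinely variable $q(\cdot)$; you should instead invoke a modular Radon--Riesz property for uniformly convex semimodulars (weak convergence together with convergence of the modulars implies modular, hence norm, convergence of the differences when $1<q^{-}\le q^{+}<\infty$), which plays the role of the convexity argument the paper borrows. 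Finally, your observation that Assumption 1 (i),(ii) alone do not give continuity of $\Phi$ on $X$ is well taken: the paper's proof, like the cited Theorem 2.7, implicitly also uses the local Lipschitz property (iii), so your caveat points at a small imprecision in the theorem's hypotheses rather than a defect of your argument.
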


Before proving this theorem, we need to prove the following lemma which is of independent interests.
\begin{lemma}\label{reflect}
Let $s\in C_{loc}^{log}(\mathbb{T}^{n})\cap L^{\infty}(\mathbb{T}^{n})$, $q\in \mathcal{P}^{log}(\mathbb{T}^{n})$, $1 < q^{-}\leq q(x) \leq q^{+} < \infty$,
then the dual space of $B^{s(\cdot)}_{q(\cdot)}$ is $B^{-s(\cdot)}_{q(\cdot)'}$ where
\begin{align*}
\frac{1}{q(x)} + \frac{1}{q(x)'} = 1.
\end{align*}
\end{lemma}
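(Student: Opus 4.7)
The plan is to translate the duality into a statement about the sequence spaces via the wavelet characterization of Theorem \ref{kong_jian_wavelet}, and then reduce it to the standard duality $(L^{q(\cdot)})^\ast=L^{q(\cdot)'}$ for variable exponent Lebesgue spaces, which is available under our hypotheses $q\in\mathcal{P}^{\log}$ and $1<q^-\leq q^+<\infty$ (see \cite{variable_book}). Theorem \ref{kong_jian_wavelet} furnishes an isomorphism $B^{s(\cdot)}_{q(\cdot)}\cong \tilde b^{s(\cdot)}_{q(\cdot)}$, and applied with $(-s(\cdot),q(\cdot)')$ in place of $(s(\cdot),q(\cdot))$ it identifies the candidate dual with $\tilde b^{-s(\cdot)}_{q(\cdot)'}$. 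It therefore suffices to exhibit an isomorphism $(\tilde b^{s(\cdot)}_{q(\cdot)})^\ast\cong \tilde b^{-s(\cdot)}_{q(\cdot)'}$ via the natural pairing
\[
\langle \lambda,\mu\rangle:=\sum_{j,G,m} 2^{-jn}\lambda^{j}_{Gm}\mu^{j}_{Gm},
\]
which is precisely the $L^2$ inner product transported to the coefficient side.

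To realize this identification, I would first embed $\tilde b^{s(\cdot)}_{q(\cdot)}$ isometrically into a variable exponent Lebesgue space on an auxiliary measure space. Set $Y:=\mathbb{T}^n\times \bigsqcup_{j\geq 0}(G^j\times \mathbb{M}_j)$, equipped with the product of Lebesgue and counting measures, and extend the exponent by $\tilde q(x,j,G,m):=q(x)$. Defining $T(\lambda)(x,j,G,m):=2^{js(2^{-j}m)}\lambda^j_{Gm}\chi_{jm}(x)$ and using $\chi_{jm}^{q(x)}=\chi_{jm}$, a direct computation gives $\rho_{L^{\tilde q(\cdot)}(Y)}(T\lambda)=\rho_{\tilde b^{s(\cdot)}_{q(\cdot)}}(\lambda)$, so $T$ is an isometry at the level of modulars and hence also in Luxemburg norm. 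For the forward inclusion, write $2^{-jn}=\int_{\mathbb{T}^n}\chi_{jm}\,dx$ and split $\lambda^{j}_{Gm}\mu^{j}_{Gm}=(2^{js(2^{-j}m)}\lambda^{j}_{Gm})(2^{-js(2^{-j}m)}\mu^{j}_{Gm})$; applying a pointwise-in-$x$ $\ell^{q(x)}$–$\ell^{q(x)'}$ Hölder inequality over $(j,G)$ (only one $m$ contributes per $j$ thanks to $\chi_{jm}$), followed by the variable $L^{q(\cdot)}$–$L^{q(\cdot)'}$ Hölder inequality, yields $|\langle \lambda,\mu\rangle|\leq C\|\lambda\|_{\tilde b^{s(\cdot)}_{q(\cdot)}}\|\mu\|_{\tilde b^{-s(\cdot)}_{q(\cdot)'}}$. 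For the reverse direction, given $\ell\in(\tilde b^{s(\cdot)}_{q(\cdot)})^\ast$, I would push $\ell\circ T^{-1}$ from $T(\tilde b^{s(\cdot)}_{q(\cdot)})$ to $L^{\tilde q(\cdot)}(Y)$ via Hahn–Banach without increasing its norm, represent the extension by some $g\in L^{\tilde q'(\cdot)}(Y)$ using the variable Lebesgue duality, and then recover the sequence by $\mu^{j}_{Gm}:=2^{jn}2^{js(2^{-j}m)}\int_{Q_{jm}}g(x,j,G,m)\,dx$, where $Q_{jm}$ denotes the dyadic cube on which $\chi_{jm}$ is supported. Running the Hölder computation of the forward direction in reverse, together with a converse Hölder estimate in variable $L^{q(\cdot)}$ spaces, would then place $\mu$ in $\tilde b^{-s(\cdot)}_{q(\cdot)'}$ with $\|\mu\|\leq C\|\ell\|$.

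The hard part will be the careful bookkeeping of the weights $2^{\pm js(2^{-j}m)}$ and the cube-measure factors $2^{-jn}=|Q_{jm}|$, so that the pointwise and integral Hölder inequalities deliver exactly the claimed norms with no residual $j$-dependent loss. This step relies on the log-Hölder regularity of both $q(\cdot)$ and $s(\cdot)$, which guarantees that these exponents are essentially constant on each dyadic cube $Q_{jm}$ up to multiplicative constants uniform in $j$, so that $\int_{Q_{jm}}|\cdot|^{q(x)}\,dx$ can be compared with $|Q_{jm}|$ times $|\cdot|^{q(x_0)}$ for any reference point $x_0\in Q_{jm}$. Verifying this uniform equivalence, together with the converse Hölder inequality for variable Lebesgue spaces from \cite{variable_book}, is the technical core of the argument; the condition $q^->1$ is essential both for the existence of the conjugate exponent $q(\cdot)'$ and for the availability of the duality $(L^{q(\cdot)})^\ast=L^{q(\cdot)'}$ invoked in the Hahn–Banach step.
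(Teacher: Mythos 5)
Your proposal is sound in outline, but it takes a genuinely different route from the paper's proof. The paper never discretizes: both inclusions are established directly on the Littlewood--Paley side. For $B^{-s(\cdot)}_{q(\cdot)'}\subset (B^{s(\cdot)}_{q(\cdot)})'$ it pairs $f$ with Schwartz functions through the decomposition $f=\sum_{k}(\varphi_k\hat f_k)^{\vee}$ and a H\"older estimate pairing $L^{q'(\cdot)}(\ell^{q'(\cdot)})$ with $L^{q(\cdot)}(\ell^{q(\cdot)})$; for the converse it embeds $B^{s(\cdot)}_{q(\cdot)}$ into $L^{q(\cdot)}(\ell^{q(\cdot)})$ via $f\mapsto\{2^{ks(\cdot)}(\varphi_k\hat f)^{\vee}\}_{k}$, extends the functional by Hahn--Banach, represents it by vector-valued duality (the Dinculeanu reference), and controls the representing sequence by the convolution estimate with the kernels $\eta_{j,M}(x)=2^{jn}(1+2^{j}|x|)^{-M}$ (Lemma 5.4 of \cite{variable_tribel}). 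You instead discretize first through Theorem \ref{kong_jian_wavelet} and prove the duality at the level of coefficients; note that since the paper's $B^{s(\cdot)}_{q(\cdot)}$ is by definition $F^{s(\cdot)}_{q(\cdot),q(\cdot)}$, the relevant sequence space is $\tilde f^{s(\cdot)}_{q(\cdot),q(\cdot)}$ of Theorem \ref{kong_jian_wavelet}(ii) (your $\tilde b^{s(\cdot)}_{q(\cdot)}$ coincides with it in this diagonal case, whereas the paper's $\tilde b$-scale carries a constant outer exponent). Your isometric embedding into $L^{\tilde q(\cdot)}$ over the augmented measure space, the weighted pairing $\sum 2^{-jn}\lambda^{j}_{Gm}\mu^{j}_{Gm}$ (indeed the transported $L^{2}$ pairing), the two-step H\"older argument, and the Hahn--Banach/representation step are all workable; the reduction additionally uses, and you should state, that the wavelet system is an unconditional basis (true here since $q^{+}<\infty$), so a functional can be evaluated term by term on the expansion, and that $q(\cdot)'\in\mathcal{P}^{\log}$, automatic from $1<q^{-}\le q^{+}<\infty$. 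Structurally the two proofs are parallel (one H\"older inclusion; one isometric embedding plus Hahn--Banach plus Lebesgue-type duality plus a uniform estimate), but yours trades the vector-measure duality and Fourier-side manipulations for the scalar duality $(L^{q(\cdot)})^{*}=L^{q(\cdot)'}$ on a larger measure space, which is arguably more elementary once the wavelet characterization is taken as given.

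One caution about what you call the technical core. After representing the extension by $g$ and setting $\mu^{j}_{Gm}=2^{jn}2^{js(2^{-j}m)}\int_{Q_{jm}}g(x,j,G,m)\,dx$, the estimate you still owe is a uniform bound for the dyadic averaging operators $g\mapsto \chi_{Q_{jm}}(\cdot)\,|Q_{jm}|^{-1}\int_{Q_{jm}}|g|$ on the mixed variable-exponent space. Your proposed justification --- that log-H\"older continuity makes $q(\cdot)$ and $s(\cdot)$ essentially constant on $Q_{jm}$, so that $\int_{Q_{jm}}|\cdot|^{q(x)}dx$ can be frozen at a reference point with constants uniform in $j$ --- is too optimistic as stated: on a cube of side $2^{-j}$ the oscillation of $q$ is only of order $1/j$, so $t^{|q(x)-q(x_0)|}$ is uniformly controlled only for $t$ in a window of the form $[2^{-cj},2^{cj}]$, and large and small values of the argument must be treated separately. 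The estimate you need is nevertheless true under the standing hypotheses: for $x\in Q_{jm}$ one has $|Q_{jm}|^{-1}\int_{Q_{jm}}|g|\le C\,(\eta_{j,M}*|g|)(x)$, so it is in effect the diagonal case of the same Lemma 5.4 of \cite{variable_tribel} that the paper invokes. Your plan therefore closes, but that convolution/maximal-function machinery, not naive exponent freezing, should carry the weight at this step.
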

\begin{proof}
Step 1. Let $s\in C_{loc}^{log}\cap L^{\infty}$, $q \in \mathcal{P}^{log}$. We prove in this step that
$B^{-s(\cdot)}_{q(\cdot)}(\mathbb{R}^{n}) \subset B^{s(\cdot)}_{q(\cdot)}(\mathbb{R}^{n})'$
where $B^{s(\cdot)}_{q(\cdot)}(\mathbb{R}^{n})'$ stands for the dual space of
$B^{-s(\cdot)}_{q(\cdot)}(\mathbb{R}^{n})$.
Let $f \in B^{-s(\cdot)}_{q'(\cdot)}(\mathbb{R}^{n})$, define
$f_{k} = \sum_{r = -1}^{1} (\varphi_{k+r}\hat{f})^{\vee}$
where $\varphi_{\ell}$ defined as in (\ref{var_define_eql_space}) with $\varphi$
is a smooth decompositions of unity \cite{DanchinBook,Triebel2008}. Then we know that
\begin{align}
f = \sum_{k = 0}^{\infty} (\varphi_{k}\hat{f_{k}})^{\vee} \quad \text{in }\mathcal{S}'(\mathbb{R}^{n})
\end{align}
and
\begin{align}
\begin{split}
& \|2^{-s(\cdot)}f_{k}\|_{L^{q'(\cdot)}(\mathbb{R}^{n}, \ell^{q'(\cdot)})}  \\
= & \inf\left\{ \lambda > 0 : \, \int_{\mathbb{R}^{n}}
\sum_{j = 0}^{\infty} \frac{2^{-js(x)q'(x)}}{\lambda^{q'(x)}} |(\varphi_{j}\hat{f}_{k})^{\vee}(x)|^{q'(x)} dx \leq 1 \right\}   \\
\leq & C \inf\left\{ \frac{\lambda}{2} > 0 : \, \int_{\mathbb{R}^{n}}
\sum_{j = 0}^{\infty} \frac{2^{-js(x)q'(x)}}{\left(\frac{\lambda}{2}\right)^{q'(x)}} |(\varphi_{k}\hat{f})^{\vee}(x)|^{q'(x)} dx \leq 1 \right\}    \\
\leq & C \|f\|_{B^{-s(\cdot)}_{q(\cdot)}}.
\end{split}
\end{align}
Take $\varphi \in \mathcal{S}(\mathbb{R}^{n})$, then we have
\begin{align*}
|f(\varphi)| & = \left|\sum_{k=0}^{\infty}f(\mathcal{F}(\varphi_{k}\varphi^{\vee}))\right|  \\
& = \left| \sum_{k=0}^{\infty}\sum_{r=-1}^{1} (\varphi_{k+r}\hat{f}_{k+r})^{\vee}(\mathcal{F}(\varphi_{k}\varphi^{\vee})) \right|   \\
& = \left| \sum_{\ell = 0}^{\infty} \sum_{r=-1}^{1}\int_{\mathbb{R}^{n}} 2^{-s(x)\ell}f_{\ell}(x)2^{s(x)\ell}
\mathcal{F}(\varphi_{\ell}\varphi_{\ell+r}\varphi^{\vee}) dx \right|    \\
& \leq C \|2^{-s(\cdot)k}f_{k}\|_{L^{q'(\cdot)}(\mathbb{R}^{n},\ell^{q'(\cdot)})}
\|2^{s(\cdot)k}\mathcal{F}(\varphi_{k}\varphi_{k+r}\varphi^{\vee})\|_{L^{q(\cdot)}(\mathbb{R}^{n},\ell^{q(\cdot)})} \\
& \leq C \|f\|_{B^{-s(\cdot)}_{q(\cdot)}(\mathbb{R}^{n})}\|\varphi\|_{B^{s(\cdot)}_{q(\cdot)}(\mathbb{R}^{n})}.
\end{align*}
Hence, now we proved $B^{-s(\cdot)}_{q(\cdot)}(\mathbb{R}^{n}) \subset B^{s(\cdot)}_{q(\cdot)}(\mathbb{R}^{n})'$.

Step 2. In this step, we need to prove $B^{s(\cdot)}_{q(\cdot)}(\mathbb{R}^{n})' \subset B^{-s(\cdot)}_{q(\cdot)}(\mathbb{R}^{n})$.
Since $f \in B^{s(\cdot)}_{q(\cdot)}(\mathbb{R}^{n}) \rightarrow \{2^{s(\cdot)k}(\varphi_{k}\hat{f})^{\vee}\}_{k=0}^{\infty}$
is a one-to-one mapping from $B^{s(\cdot)}_{q(\cdot)}(\mathbb{R}^{n})$ onto a subspace of $L^{q(\cdot)}(\mathbb{R}^{n},\ell^{q(\cdot)})$,
every functional $g \in (B^{s(\cdot)}_{q(\cdot)})'$ can be interpreted as a functional on that subspace.
By the Hahn-Banach theorem, $g$ can be extended to a continuous linear functional on $L^{q(\cdot)}(\mathbb{R}^{n},\ell^{q(\cdot)})$,
where the norm of $g$ is preserved. If $\varphi \in \mathcal{S}(\mathbb{R}^{n})$,
considering Corollary 1 of Theorem 8 in Chapter 13 of \cite{vector_measure}
we have
\begin{align}\label{dual_two}
g(\varphi) = \int_{\mathbb{R}^{n}} \sum_{k=0}^{\infty}g_{k}(x)(\varphi_{k}\hat{\varphi})^{\vee}(x)dx
\end{align}
where $\|2^{-s(\cdot)k}g_{k}\|_{L^{q'(\cdot)}(\mathbb{R}^{n},\ell^{q'(\cdot)})}$ is equivalent
to the operator norm of $g$.
(\ref{dual_two}) implies that
\begin{align}\label{newform_dual_two}
g = \sum_{k=0}^{\infty} \mathcal{F}(\varphi_{k}g_{k}^{\vee}) \quad \text{in }\mathcal{S}'.
\end{align}
Define $\eta_{km}(x) = \frac{2^{nk}}{(1+2^{k}|x|)^{m}}$ with $m$ is a large enough constant,
since $\varphi$ can be chosen to be a radial smooth function with
compact support, we know that it can be controlled as follows
\begin{align}\label{dual_control}
2^{kn}\varphi^{\vee}(2^{k}x) \leq C \eta_{km}(x).
\end{align}
Now, we have
\begin{align*}
& \|2^{-s(\cdot)k}(\varphi_{k}\hat{g})^{\vee}\|_{L^{q'(\cdot)}(\mathbb{R}^{n},\ell^{q'(\cdot)})}  \\
= & \inf\left\{ \lambda > 0:\, \int_{\mathbb{R}^{n}} \frac{1}{\lambda^{q'(x)}}
\sum_{k=0}^{\infty} 2^{-s(x)kq'(x)} |(\varphi_{k}\hat{g})^{\vee}(x)|^{q'(x)} dx \leq 1 \right\} \\
= & \inf\left\{ \lambda > 0:\, \int_{\mathbb{R}^{n}} \frac{1}{\lambda^{q'(x)}}
\sum_{k=0}^{\infty} 2^{-s(x)kq'(x)} |\sum_{r=-1}^{1}(\varphi_{k}\varphi_{k+r}g_{k+r}^{\vee}(-\cdot))^{\vee}(x)|^{q'(x)} dx \leq 1 \right\} \\
\leq & C \|2^{-s(\cdot)k} \varphi_{k}^{\vee} * g_{k}\|_{L^{q'(\cdot)}(\mathbb{R}^{n},\ell^{q'(\cdot)})}    \\
\leq & C \|2^{-s(\cdot)k} \eta_{km} * g_{k}\|_{L^{q'(\cdot)}(\mathbb{R}^{n},\ell^{q'(\cdot)})}     \\
\leq & C \|2^{-s(\cdot)k} g_{k}\|_{L^{q'(\cdot)}(\mathbb{R}^{n},\ell^{q'(\cdot)})}
\end{align*}
where the last inequality follows from Lemma 5.4 of \cite{variable_tribel}.
(Here we need a small modification of Lemma 5.4 of \cite{variable_tribel}, however, the modification
is straightforward so we omit for concise.)
With the above estimates, obviously, the proof is completed.
\end{proof}
Although the above proof is for the whole space $\mathbb{R}^{n}$, it is also valid for periodic domain $\mathbb{T}^{n}$.
Now, let us come back to the proof of Theorem \ref{convergence}.
\begin{proof}
For any $\delta > 0$ there is $N = N_{1}(\delta)$ such that
\begin{align*}
\bar{I} \leq I(u_{n}) \leq \bar{I} + \delta, \quad \forall n\geq N_{1}.
\end{align*}
Thus
\begin{align*}
\frac{1}{2}\rho_{B_{q(\cdot)}^{s(\cdot)}}(u_{n}) \leq \bar{u} + \delta \quad \forall n \geq N_{1}.
\end{align*}
The sequence $\{u_{n}\}$ is bounded in $B^{s(\cdot)}_{q(\cdot)}$.
By the above Lemma \ref{reflect}, we know that $B^{s(\cdot)}_{q(\cdot)}$ is reflexive, there exists
$\bar{u} \in B^{s(\cdot)}_{q(\cdot)}$ such that $u_{n} \rightharpoonup \bar{u}$ in $B^{s(\cdot)}_{q(\cdot)}$.
By the compact embedding of $B^{s(\cdot)}_{q(\cdot)}$ in $X$ we deduce that $u_{n} \rightarrow \bar{u}$, strongly in $X$.
Noticing that $\rho_{B_{q(\cdot)}^{s(\cdot)}}(u)$ is lower semi-continuous by Theorem 2.2.8 of \cite{variable_book}.
Now, we can use similar ideas of Theorem 2.7 in \cite{inverse_fluid_equation} to complete the proof.
\end{proof}


\section{Application to backward diffusion problem}\label{application}

In this section, we apply our theory to inverse source problems for integer order diffusion equation and
fractional order diffusion equation.

\subsection{Integer order diffusion equation}

For simplicity we consider periodic domain $\mathbb{T}^{n}$. Define the operator $A$ as follows:
\begin{align*}
& H = \left( L^{2}(\mathbb{T}^{n}), <\cdot, \cdot>, \|\cdot\| \right) \\
& A = -\Delta, \quad \mathcal{D}(A) = H^{2}(\mathbb{T}^{n}).
\end{align*}
Consider the diffusion equation on $\mathbb{T}^{n}$ with periodic boundary condition, writing it as an ordinary
differential equation in $H$:
\begin{align}
\frac{d}{dt}v + A v = 0,\quad v(0) = u.
\end{align}
Define $G(u) = e^{-A}u$, $\ell$ to be an operator defined as follows
\begin{align}
\ell(G(u)) = \left( G(u)(x_{1}), G(u)(x_{2}), \cdots, G(u)(x_{K}) \right)^{T}
\end{align}
where $K$ is a fixed constant.
Then we have the relationship
\begin{align}\label{inverse_heat_form}
y = \ell (G(u)) + \eta
\end{align}
where $\eta = \{\eta_{j}\}_{j=1}^{K}$ is a mean zero Gaussian with covariance $\Gamma$, $y = \{y_{j}\}_{j=1}^{K}$
is the data which we measured.
Now we can show the well-definedness of the posterior measure and its continuity with respect to the data for
the above inverse diffusion problem.
\begin{theorem}\label{inverse_heat_wellposedness}
Consider the inverse problem for finding $u$ from noisy observations of $G(u) = v(1,\cdot)$ in the form of (\ref{inverse_heat_form}).
Let $\mu_{0}$ to be distributed as a variable index Besov prior $(\delta, B^{s(\cdot)}_{q(\cdot)})$ with
$s\in C_{loc}^{log}(\mathbb{T}^{n})\cap L^{\infty}(\mathbb{T}^{n})$,
$q\in \mathcal{P}^{log}(\mathbb{T}^{n})$, $\inf_{x\in \mathbb{T}^{n}}s(x) > \frac{n}{q^{+}}$ and $\delta > 4$.
Then the measure $\mu^{y}(du)$ is absolutely continuous with respect to $\mu_{0}$ with Radon-Nikodym derivative satisfying
\begin{align*}
\frac{d\mu^{y}}{d\mu_{0}}(u) = \frac{1}{Z(y)} \exp(-\Phi(u;y))
\end{align*}
where
\begin{align*}
\Phi(u;y) = \frac{1}{2}|\Gamma^{-1/2}(y-\ell(G(u)))|^{2} - \frac{1}{2}|\Gamma^{-1/2}y|^{2}
\end{align*}
and
\begin{align*}
Z(y) = \int_{B^{t(\cdot)}_{q(\cdot)}}\exp\left(-\frac{1}{2}|\Gamma^{-1/2}(y-\ell(G(u)))|^{2}
+ \frac{1}{2}|\Gamma^{-1/2}y|^{2}\right)\mu_{0}(du)
\end{align*}
with $\sup_{x\in \mathbb{T}^{n}}\left(t(x) -s(x) + \frac{n}{q^{+}} \right)<0$.
Furthermore, the posterior measure is continuous in the Hellinger metric with respect to the data
\begin{align*}
d_{Hell}(\mu^{y}, \mu^{y'}) \leq C |y-y'|.
\end{align*}
\end{theorem}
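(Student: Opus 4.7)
The plan is to reduce the claim to the abstract well-posedness and Hellinger-continuity results (Theorems \ref{bayesian_well_defined} and \ref{bayesian_well_posedness}) by choosing an appropriate ambient Banach space $X$ and checking that the potential $\Phi(u;y)=\tfrac{1}{2}|\Gamma^{-1/2}(y-\ell(G(u)))|^{2}-\tfrac{1}{2}|\Gamma^{-1/2}y|^{2}$ satisfies Assumptions 1 (i)--(iv). I would take $X=L^{2}(\mathbb{T}^{n})$ (or $C(\mathbb{T}^{n})$). Since $\inf_{x} s(x)>n/q^{+}$, for $t$ chosen with $\sup_{x}(t(x)-s(x)+n/q^{+})<0$ and $\inf_{x}t(x)>0$ we have the variable-Besov embedding $B^{t(\cdot)}_{q(\cdot)}\hookrightarrow X$ with some constant $c_{e}$, which is exactly the hypothesis needed in Theorems \ref{bayesian_well_defined} and \ref{bayesian_well_posedness}.

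The key analytic input is the smoothing property of the heat semigroup: $G=e^{-A}:L^{2}(\mathbb{T}^{n})\to C^{\infty}(\mathbb{T}^{n})$ is bounded with
\begin{equation*}
\|G(u)\|_{C(\mathbb{T}^{n})}\leq C_{G}\|u\|_{L^{2}(\mathbb{T}^{n})},
\end{equation*}
so the finite point functional $\ell$ satisfies $|\ell(G(u))|\leq K C_{G}\|u\|_{X}$, and by linearity $|\ell(G(u_{1})-G(u_{2}))|\leq K C_{G}\|u_{1}-u_{2}\|_{X}$. I would then check each assumption in turn. For (i), expanding $\Phi(u;y)=\tfrac{1}{2}|\Gamma^{-1/2}\ell(G(u))|^{2}-\langle\Gamma^{-1/2}\ell(G(u)),\Gamma^{-1/2}y\rangle$ and applying $ab\le\tfrac12 a^{2}+\tfrac12 b^{2}$ yields $\Phi(u;y)\geq -\tfrac{1}{2}|\Gamma^{-1/2}y|^{2}\geq M(r)$ for any arbitrarily small $\alpha_{1}>0$. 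For (ii), Cauchy--Schwarz plus the smoothing bound gives $\Phi(u;y)\le C(\|\Gamma^{-1/2}\|,K,C_{G})(1+\|u\|_{X}^{2}+\|y\|_{Y}^{2})$. For (iii), the identity $\tfrac12|y-a|^{2}-\tfrac12|y-b|^{2}=\langle b-a,y-\tfrac12(a+b)\rangle$ with $a=\ell(G(u_{1}))$, $b=\ell(G(u_{2}))$ and the Lipschitz bound on $\ell\circ G$ produce a Lipschitz constant $L(r)$. For (iv), a similar expansion in $y$ gives $|\Phi(u,y_{1})-\Phi(u,y_{2})|\leq C(1+\|u\|_{X})\|y_{1}-y_{2}\|_{Y}$, which is dominated by $\exp(\alpha_{2}\|u\|_{X}+C)\|y_{1}-y_{2}\|_{Y}$ for an arbitrarily small $\alpha_{2}>0$.

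Once these four bounds are in place with $\alpha_{1},\alpha_{2}$ chosen small enough that
$2\max\{c_{e}^{q^{-}},c_{e}^{q^{+}}\}(\alpha_{1}+2\alpha_{2})<4,$
the hypothesis $\delta>4$ majorizes the threshold $\delta^{*}$ appearing in both Theorem \ref{bayesian_well_defined} and Theorem \ref{bayesian_well_posedness}. Applying those theorems directly then yields the Radon--Nikodym formula with the stated $Z(y)$, and the Hellinger estimate $d_{Hell}(\mu^{y},\mu^{y'})\leq C|y-y'|$.

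The part I expect to require most care is matching the variable-Besov embedding $B^{t(\cdot)}_{q(\cdot)}\hookrightarrow X$ with a quantitative constant $c_{e}$ so that, together with the chosen $\alpha_{1},\alpha_{2}$, the threshold $\delta^{*}$ is indeed below $4$; everything else is a routine verification using linearity of $\ell\circ G$ and the smoothing of $e^{-A}$. All nonlinear issues are absorbed by the fact that $\Phi$ is a quadratic functional of a linear smoothing map, so Assumption 1 holds with essentially free choice of the small constants $\alpha_{1},\alpha_{2}$.
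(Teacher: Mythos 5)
Your verification of Assumptions 1 (i)--(iv) for the quadratic potential is correct, and so is the observation that $\alpha_{1},\alpha_{2}$ may be taken small so that $\delta>4$ dominates the resulting threshold $\delta^{*}$. The gap is in the very first step: the choice $X=L^{2}(\mathbb{T}^{n})$ (worse still $C(\mathbb{T}^{n})$) together with the claim that $\inf_{x}t(x)>0$ already gives a continuous embedding $B^{t(\cdot)}_{q(\cdot)}\hookrightarrow X$. For $q^{-}<2$ a Sobolev-type embedding into $L^{2}$ requires roughly $t(x)>n/q(x)-n/2$, and the hypotheses of the theorem only provide $\inf_{x}s(x)>n/q^{+}$ together with the constraint $\sup_{x}(t(x)-s(x)+n/q^{+})<0$, which forces $t$ to be small whenever $s$ is close to $n/q^{+}$. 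Concretely, with $n=3$, $q\equiv 1$, $s\equiv 3.1$ you must take $t<0.1$, and $B^{t}_{1,1}(\mathbb{T}^{3})\not\subset L^{2}(\mathbb{T}^{3})$; hence there is no finite embedding constant $c_{e}$, and Theorems \ref{bayesian_well_defined} and \ref{bayesian_well_posedness}, whose hypotheses and thresholds are phrased through that embedding, cannot be invoked with your $X$. (Even though one can check that prior draws happen to lie in $L^{2}$ a.s.\ when $\inf s>n/q^{+}$, that does not substitute for the continuous embedding the abstract theorems require.)

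The paper sidesteps this by taking $X=B^{t(\cdot)}_{q(\cdot)}$ itself, so the embedding hypothesis is trivial with $c_{e}=1$, and by proving boundedness and Lipschitz continuity of $u\mapsto \ell(G(u))$ directly on that space: one uses the embedding $B^{t(\cdot)}_{q(\cdot)}\hookrightarrow B^{a}_{2,2}$ with $a=t^{-}-n/q^{-}+n/2-\epsilon$, where $a$ is allowed to be negative, and then the fact that $A^{\gamma}e^{-A}$ is bounded from $B^{a}_{2,2}$ to $B^{b}_{2,2}$ for arbitrary $a,b,\gamma$, giving $\|e^{-A}u\|_{L^{\infty}}\leq C\|u\|_{B^{t(\cdot)}_{q(\cdot)}}$ and the analogous Lipschitz bound. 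In other words, the roughness of the prior draws is absorbed by the infinite smoothing of the heat semigroup rather than by embedding the prior's support into $L^{2}$. With this choice the paper uses $\alpha_{1}=0$, $\alpha_{2}=1$, $c_{e}=1$, so $\delta^{*}=4$, which is precisely why the theorem assumes $\delta>4$. If you replace your embedding step by these two estimates on $B^{t(\cdot)}_{q(\cdot)}$, the remainder of your reduction to Theorems \ref{bayesian_well_defined} and \ref{bayesian_well_posedness} goes through essentially unchanged.
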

\begin{proof}
Firstly, we prove two properties about the operator $\ell (G(\cdot))$.

\textbf{Property 1}:
For large enough $\ell > 0$ and a small constant $\epsilon > 0$, by Sobolev embedding theorem, we have
\begin{align}\label{inverse_heat_bounded}
\begin{split}
|\ell(G(u))| & \leq K \|G(u)\|_{L^{\infty}} = \|e^{-A}u\|_{L^{\infty}}    \\
& \leq \|A^{\ell}e^{-A}u\|_{L^{2}} + \|e^{-A}u\|_{L^{2}}   \\
& \leq C \|u\|_{B_{2,2}^{t^{-}-\frac{n}{q^{-}}+\frac{n}{2}-\epsilon}}   \\
& \leq C \|u\|_{B_{q(\cdot)}^{t(\cdot)}}
\end{split}
\end{align}
where we used the fact that $A^{\gamma}e^{-\lambda A}$, $\lambda > 0$, is a bounded linear operator
from $B_{2,2}^{a}$ to $B_{2,2}^{b}$, any $a,b,\gamma \in \mathbb{R}$ and we also used embedding theorems
for variable index Besov space \cite{Almeida_Hasto}.

\textbf{Property 2}:
Let $u_{1}$, $u_{2}$ be two different initial data for heat equations.
By similar idea from the proof of Property 1, we have
\begin{align}\label{inverse_heat_lipschitz}
\begin{split}
|\ell(G(u_{1})) - \ell(G(u_{2}))| & \leq K \|G(u_{1}) - G(u_{2})\|_{L^{\infty}}   \\
& = K \|G(u_{1}-u_{2})\|_{L^{\infty}}   \\
& \leq C \|u_{1} - u_{2}\|_{B_{q(\cdot)}^{t(\cdot)}}.
\end{split}
\end{align}
Now let $X = B_{q(\cdot)}^{t(\cdot)}$. With the above Property 1 and Property 2, it is straightforward to know that
$\Phi(u;y)$ satisfy Assumption 1 (i)-(iv) with $\alpha_{1} = 0$ and $\alpha_{2} = 1$.
By Theorem \ref{bayesian_well_defined} and Theorem \ref{bayesian_well_posedness}, we immediately
obtain our desired results.
\end{proof}

\subsection{Fractional order diffusion equation}
For fractional order diffusion equations, there are numerous literature. For the well-posedness
theory, we refer to \cite{jia1,jia2,jia3,jia4}.
Here we just treat fractional diffusion equations on periodic domain as follows
\begin{align}\label{fractional_diffusion_equation}
\begin{split}
& \partial_{t}^{\alpha}v(t,x) + (-\Delta)^{\beta}v(t,x) = 0,\quad t\geq 0,\, x\in \mathbb{T}^{n}, \\
& v(0) = u,
\end{split}
\end{align}
where $0 < \alpha \leq 1$ and $0 < \beta \leq 1$ and $\partial_{t}^{\alpha}$ stands for Caputo derivarive of $\alpha$ order,
it can be defined as follows
\begin{align*}
\partial_{t}^{\alpha} f(t) := \frac{1}{\Gamma(1-\alpha)} \int_{0}^{t} (t-s)^{-\alpha}f'(s)ds,
\end{align*}
where $\Gamma(\cdot)$ is the usual Gamma function.
Define the operator $A$ as follows:
\begin{align*}
& H = \left( L^{2}(\mathbb{T}^{n}), <\cdot, \cdot>, \|\cdot\| \right) \\
& A = (-\Delta)^{\beta}, \quad \mathcal{D}(A) = H^{2\beta}(\mathbb{T}^{n}).
\end{align*}
Consider the heat conduction equation on $\mathbb{T}^{n}$ with periodic boundary condition, writing it as an ordinary
differential equation in $H$:
\begin{align}\label{fractional_time}
\partial_{t}^{\alpha}v + A v = 0,\quad v(0) = u.
\end{align}
If $A$ is a bounded operator e.g. a positive number, then the solution of the above equation (\ref{fractional_time}) has the following form
\begin{align*}
v(t) = E_{\alpha}(-At^{\alpha})u,
\end{align*}
where $E_{\alpha}(\cdot)$ is Mittag-Leffer function defined as
\begin{align*}
E_{\alpha}(z) = \sum_{k=0}^{\infty}\frac{z^{k}}{\Gamma(\alpha k+1)}.
\end{align*}
For more properties about Mittag-Leffer function we refer to \cite{jia5,jia6}.
In \cite{jia7}, they proposed fractional operator semigroup which characterize the solution of
abstract fractional cauchy problem (\ref{fractional_time}).
Since operator $A$ in our setting can generate a fractional operator semigroup, we can
define $G(u) = E_{\alpha}(-A)u$, $\ell$ to be an operator defined as follows
\begin{align}
\ell(G(u)) = \left( G(u)(x_{1}), G(u)(x_{2}), \cdots, G(u)(x_{K}) \right)^{T}
\end{align}
where $K$ is a fixed constant.
Then we have the relationship
\begin{align}\label{inverse_fractional_heat_form}
y = \ell (G(u)) + \eta
\end{align}
where $\eta = \{\eta_{j}\}_{j=1}^{K}$ is a mean zero Gaussian with covariance $\Gamma$, $y = \{y_{j}\}_{j=1}^{K}$
is the data which we measured.
Reviewing the proof of Theorem \ref{inverse_heat_wellposedness}, the key points are the estimates about the operator $\ell(G(\cdot))$.
Here, we meet more crucial situations for the fractional diffusion equations have no strong smoothing 
effect as normal diffusion equations. For a more complete illustration, we refer to \cite{fractional_inv}.
Mittag-Leffer function has only polynomial decay rate which restrict the smoothing effect. More precisely, 
we list the following decay rate estimates
\begin{lemma}\cite{jia6}\label{mit_decay}
If $0 < \alpha < 2$, $\mu$ is an arbitrary real number such that
\begin{align*}
\frac{\pi \alpha}{2} < \mu < \min \{ \pi, \pi\alpha \},
\end{align*}
then for an arbitrary integer $p \geq 1$,
when $|z| \rightarrow \infty$ the following expansion holds:
\begin{align*}
E_{\alpha}(z) = \frac{1}{\alpha}e^{z^{1/\alpha}} - \sum_{k=1}^{p}\frac{z^{-k}}{\Gamma(\beta - \alpha k)} + O(|z|^{-1-p}),
\end{align*}
where $|\text{arg}(z)| \leq \mu$.
\end{lemma}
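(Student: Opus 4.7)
The plan is to derive the asymptotic expansion from a contour integral representation of the Mittag-Leffler function, combined with a Hankel-type formula for the reciprocal Gamma function. First I would recall (or establish, by term-by-term manipulation of the power series defining $E_\alpha$) the identity
\begin{equation*}
E_\alpha(z) \;=\; \frac{1}{2\pi i}\int_{\mathcal{C}} \frac{\zeta^{\alpha-1}\, e^{\zeta}}{\zeta^\alpha - z}\, d\zeta,
\end{equation*}
valid in the sector $|\arg z|\leq \mu$ with the restriction $\pi\alpha/2 < \mu < \min\{\pi,\pi\alpha\}$, where $\mathcal{C}$ is a Hankel-type contour that starts at $-\infty$ below the negative real axis, loops around the origin, and returns to $-\infty$ above, and which is chosen so that the branch cut of $\zeta^\alpha$ is respected and $z$ lies in the region bounded by $\mathcal{C}$ after we enlarge it. The restriction on $\mu$ is exactly what is needed so that, as $|z|\to\infty$ inside the sector, the point $\zeta=z^{1/\alpha}$ (the unique root of $\zeta^\alpha=z$ in the principal sheet) lies on the correct side of $\mathcal{C}$ to be captured by a contour deformation.

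Next I would deform $\mathcal{C}$ outward, crossing the simple pole at $\zeta=z^{1/\alpha}$. A residue calculation at that pole contributes
\begin{equation*}
\operatorname{Res}_{\zeta=z^{1/\alpha}} \frac{\zeta^{\alpha-1}e^\zeta}{\zeta^\alpha - z} \;=\; \frac{1}{\alpha}\, e^{z^{1/\alpha}},
\end{equation*}
which is the dominant exponential term in the expansion. On the deformed contour $\mathcal{C}'$, which can be arranged so that $|\zeta^\alpha|\leq c|z|$ with $c<1$ on the portion of $\mathcal{C}'$ relevant for the expansion, I would write the geometric expansion
\begin{equation*}
\frac{1}{\zeta^\alpha - z} \;=\; -\sum_{k=1}^{p}\frac{\zeta^{\alpha(k-1)}}{z^{k}} \;+\; \frac{\zeta^{\alpha p}}{z^{p}(\zeta^\alpha-z)},
\end{equation*}
and substitute it under the integral. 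The finite sum yields
\begin{equation*}
-\sum_{k=1}^{p}\frac{1}{z^k}\cdot\frac{1}{2\pi i}\int_{\mathcal{C}'} \zeta^{\alpha k -1}\,e^{\zeta}\,d\zeta \;=\; -\sum_{k=1}^{p}\frac{z^{-k}}{\Gamma(1-\alpha k)},
\end{equation*}
where the last equality uses Hankel's representation $\frac{1}{\Gamma(s)}=\frac{1}{2\pi i}\int_{\mathcal{C}'}\zeta^{-s}e^\zeta\,d\zeta$ (with $s=1-\alpha k$). This is the polynomial tail stated in the lemma (matching $\Gamma(\beta-\alpha k)$ once $\beta$ is read as $1$, i.e.\ for the one-parameter $E_\alpha=E_{\alpha,1}$).

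Finally, I would estimate the remainder term: on $\mathcal{C}'$ one has $|\zeta^{\alpha p}/(\zeta^\alpha-z)| = O(|z|^{-1}|\zeta|^{\alpha p})$ uniformly for $|\arg z|\leq \mu$, and the integral $\int_{\mathcal{C}'}|\zeta|^{\alpha p-1}e^{\mathrm{Re}\,\zeta}\,|d\zeta|$ is finite, yielding the $O(|z|^{-1-p})$ bound. The main obstacle, in my view, is the careful bookkeeping of the contour deformation across the different regimes of $\mu$: one must verify that the pole at $\zeta=z^{1/\alpha}$ is genuinely crossed (and only once, on the principal sheet) precisely when $|\arg z|<\pi\alpha/2$, that the upper limit $\mu<\min\{\pi,\pi\alpha\}$ keeps the contour well away from the branch cut of $\zeta^\alpha$, and that the remainder estimate is uniform as $|\arg z|\to\mu$. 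Handling this uniformity, together with the case $1<\alpha<2$ where $\pi\alpha>\pi$ forces a more delicate choice of branch, is where the argument becomes technical.
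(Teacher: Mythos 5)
The paper itself contains no proof of this lemma: it is quoted (with a stray parameter $\beta$, which as you note must be read as $1$ since $E_\alpha = E_{\alpha,1}$) from the reference \cite{jia6}, and your sketch is precisely the standard argument given there — Hankel-type integral representation, extraction of the residue $\frac{1}{\alpha}e^{z^{1/\alpha}}$, finite geometric expansion of the Cauchy kernel, Hankel's formula yielding the coefficients $1/\Gamma(1-\alpha k)$, and a remainder estimate — so in substance your route coincides with the source the paper relies on. Two details in your final "bookkeeping" paragraph should be corrected, although they do not affect the structure: the pole at $\zeta=z^{1/\alpha}$ is enclosed, and the residue term retained, for \emph{every} $|\arg z|\le\mu$, not only for $|\arg z|<\pi\alpha/2$; choosing the contour rays at angles $\pm\phi$ with $\mu<\alpha\phi$ (possible exactly because $\mu<\pi\alpha$) guarantees this, and the threshold $\pi\alpha/2$ merely separates the regime where $e^{z^{1/\alpha}}$ grows from the regime where it is exponentially small, so keeping the term up to $|\arg z|=\mu$ is harmless and is what the stated expansion asserts. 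Likewise, the bound $|\zeta^\alpha|\le c|z|$ cannot hold on the unbounded rays of the deformed contour; there one instead bounds $|\zeta^{\alpha}-z|\ge c\,\max\bigl(|z|,|\zeta|^{\alpha}\bigr)$ using the angular gap $\alpha\phi-\mu>0$ together with the decay of $e^{\mathrm{Re}\,\zeta}$ along the rays, reserving the estimate $|\zeta^\alpha|\le c|z|$ for the fixed small circle around the origin; with this the remainder is $O(|z|^{-1-p})$ uniformly in $|\arg z|\le\mu$, as required.
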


Based on the above observation, we must restrict the fractional order $\alpha$, $\beta$ to some 
appropriate interval to gain enough smoothing effect to obtain the forward operator is lipschitz continuous. 
More precisely, we can obtain the following result.
\begin{theorem}\label{inverse_fractional_heat_wellposedness}
Consider the inverse problem for finding $u$ from noisy observations of $G(u) = v(1,\cdot)$ in the form of (\ref{inverse_fractional_heat_form})
with $0 < \alpha \leq 1$ and $\frac{n}{4} < \beta \leq 1$.
Let $X = L^{2}(\mathbb{T}^{n})$, $\mu_{0}$ to be distributed as a variable index Besov prior $(\delta, B^{s(\cdot)}_{q(\cdot)})$ with
$s\in C_{loc}^{log}(\mathbb{T}^{n}) \cap L^{\infty}(\mathbb{T}^{n})$,
$q\in \mathcal{P}^{log}(\mathbb{T}^{n})$, $\inf_{x\in \mathbb{T}^{n}}s(x) > \frac{n}{q^{+}}$ and $\delta > 4$.
Assume $t\in C_{loc}^{log}(\mathbb{T}^{n}) \cap L^{\infty}(\mathbb{T}^{n})$ and
\begin{align*}
\frac{n}{q^{-}} - \frac{n}{2} < t^{-} \leq t^{+} < s^{-} - \frac{n}{q^{+}}.
\end{align*} 
Then the measure $\mu^{y}(du)$ is absolutely continuous with respect to $\mu_{0}$ with Radon-Nikodym derivative satisfying
\begin{align*}
\frac{d\mu^{y}}{d\mu_{0}}(u) = \frac{1}{Z(y)} \exp(-\Phi(u;y))
\end{align*}
where
\begin{align*}
\Phi(u;y) = \frac{1}{2}|\Gamma^{-1/2}(y-\ell(G(u)))|^{2} - \frac{1}{2}|\Gamma^{-1/2}y|^{2}
\end{align*}
and
\begin{align*}
Z(y) = \int_{X}\exp\left(-\frac{1}{2}|\Gamma^{-1/2}(y-\ell(G(u)))|^{2}
+ \frac{1}{2}|\Gamma^{-1/2}y|^{2}\right)\mu_{0}(du).
\end{align*}
Furthermore, the posterior measure is continuous in the Hellinger metric with respect to the data
\begin{align*}
d_{Hell}(\mu^{y}, \mu^{y'}) \leq C |y-y'|.
\end{align*}
\end{theorem}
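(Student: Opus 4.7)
The plan is to mirror the proof of Theorem \ref{inverse_heat_wellposedness} and reduce the whole statement to verifying Assumption 1 for the potential $\Phi(u;y) = \tfrac12 |\Gamma^{-1/2}(y - \ell(G(u)))|^{2} - \tfrac12 |\Gamma^{-1/2}y|^{2}$, after which Theorem \ref{bayesian_well_defined} and Theorem \ref{bayesian_well_posedness} apply verbatim. As in the integer-order case, the verification of Assumption 1 with $\alpha_{1} = 0$ and $\alpha_{2} = 1$ boils down to two mapping estimates for the composite forward map $\ell \circ G$: a uniform bound $|\ell(G(u))| \leq C\|u\|_{L^{2}}$ and a Lipschitz estimate $|\ell(G(u_{1})) - \ell(G(u_{2}))| \leq C\|u_{1}-u_{2}\|_{L^{2}}$. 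Since $|\ell(\cdot)| \leq K\|\cdot\|_{L^{\infty}}$ and $G$ is linear, both estimates will follow at once from a single mapping property: $G = E_{\alpha}(-A)$ is continuous from $L^{2}(\mathbb{T}^{n})$ into $C(\mathbb{T}^{n})$.

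To establish that mapping property I would expand in the eigenbasis of $A = (-\Delta)^{\beta}$, whose eigenvalues $\lambda_{k}$ grow like $|k|^{2\beta}$, and use the bound $|E_{\alpha}(-\lambda)| \leq C(1+\lambda)^{-1}$ for $\lambda \geq 0$ and $0 < \alpha \leq 1$ supplied by Lemma \ref{mit_decay} (together with positivity and monotonicity of $E_\alpha$ on the negative axis to handle small $\lambda$). This yields $\|E_{\alpha}(-A)u\|_{H^{\sigma}} \leq C\|u\|_{L^{2}}$ for every $\sigma < 2\beta$. Choosing $\sigma \in (n/2, 2\beta)$, which is feasible precisely under the hypothesis $\beta > n/4$, Sobolev embedding closes the argument and delivers $E_{\alpha}(-A): L^{2}(\mathbb{T}^{n}) \to C(\mathbb{T}^{n})$ continuously, so pointwise evaluation is bounded and Lipschitz in $u$ on $L^{2}$.

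With $X = L^{2}(\mathbb{T}^{n})$, the four items of Assumption 1 then follow from these two estimates together with the quadratic structure of $\Phi$, exactly as in the proof of Theorem \ref{inverse_heat_wellposedness}. The remaining hypotheses of Theorem \ref{bayesian_well_defined} and Theorem \ref{bayesian_well_posedness} are present by design: the embedding $B^{t(\cdot)}_{q(\cdot)} \hookrightarrow L^{2}$ follows from the variable-index Sobolev embedding in \cite{Almeida_Hasto} since $t^{-} > n/q^{-} - n/2$; the support condition $\sup_{x}(t(x)-s(x)+n/q^{+}) < 0$ is ensured by $t^{+} < s^{-} - n/q^{+}$; and $\delta > 4$ dominates the threshold $\delta^{*} = 2 \max\{c_{e}^{q^{-}}, c_{e}^{q^{+}}\}(\alpha_{1}+2\alpha_{2})$ after normalisation of the embedding constant. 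The stated Radon-Nikodym representation, the finiteness of $Z(y)$, and the Hellinger estimate then drop out directly from the two general theorems.

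The hard part is the mapping property in the second paragraph. In the integer-order case $e^{-A}$ is smoothing of arbitrary order, so the analogue of Property 1 from Theorem \ref{inverse_heat_wellposedness} comes essentially for free and one is free to use a much larger space than $L^{2}$. In the fractional setting the Mittag-Leffler function only decays polynomially, so $E_{\alpha}(-A)$ gains exactly $2\beta$ derivatives, which is just barely enough to cross the Sobolev threshold $n/2$ into $C(\mathbb{T}^{n})$. This is the structural reason behind the restriction $\beta > n/4$ and the reason $X$ is forced to be the weak space $L^{2}$ rather than a stronger Besov space as in the diffusion case.
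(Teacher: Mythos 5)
Your proposal is correct and follows essentially the same route as the paper: establish that $E_{\alpha}(-A)$ gains $2\beta$ derivatives on $L^{2}$ via the eigenfunction expansion and the Mittag-Leffler decay of Lemma \ref{mit_decay}, use $\beta > n/4$ with Sobolev embedding to land in $L^{\infty}$ (the paper writes this as $|\ell(G(u))| \leq \|AE_{\alpha}(-A)u\|_{L^{2}} + \|E_{\alpha}(-A)u\|_{L^{2}} \leq C\|u\|_{L^{2}}$, you as $L^{2}\to H^{\sigma}\hookrightarrow C$ for $\sigma\in(n/2,2\beta)$, which is the same estimate), then verify Assumption 1 with $\alpha_{1}=0$, $\alpha_{2}=1$ and invoke Theorems \ref{bayesian_well_defined} and \ref{bayesian_well_posedness}, with the embedding $B^{t(\cdot)}_{q(\cdot)}\hookrightarrow B^{t^{-}-n/q^{-}+n/2-\epsilon}_{2,2}\hookrightarrow L^{2}$ handled exactly as in the paper.
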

\begin{proof}
In order to prove the above theorem, we first give the following estimates. Denote $f_{\ell} (\ell \in Z^{n})$ to be the fourier 
coefficient of function $f$, then we have
\begin{align}\label{smooth}
\begin{split}
\|A E_{\alpha}(-A)f\|_{L^{2}(\mathbb{T}^{n})}^{2} & = \sum_{\ell \in Z^{n}} \left( |\ell|^{2\beta}E_{\alpha}(-|\ell|^{2\beta}) \right)^{2}
|f_{\ell}|^{2}  \\
& \leq C \|f\|_{L^{2}(\mathbb{T}^{n})}^{2},
\end{split}
\end{align}
where we use Lemma \ref{mit_decay}.
Considering $\frac{n}{q^{-}} - \frac{n}{2} < t^{-} \leq t^{+} < s^{-} - \frac{n}{q^{+}}$, for an arbitrary small 
positive number $\epsilon > 0$, we know that 
\begin{align*}
B^{t(\cdot)}_{q(\cdot)} \hookrightarrow B_{2,2}^{t^{-}-\frac{n}{q^{-}}+\frac{n}{2}-\epsilon} \hookrightarrow X.
\end{align*}
Using (\ref{smooth}), we easily have
\begin{align}\label{frac_inverse_heat_bounded}
\begin{split}
|\ell(G(u))| & \leq K \|G(u)\|_{L^{\infty}} = \|E_{\alpha}(-A)u\|_{L^{\infty}}    \\
& \leq \|A E_{\alpha}(-A)u\|_{L^{2}} + \|E_{\alpha}(-A)u\|_{L^{2}}   \\
& \leq C \|u\|_{L^{2}},
\end{split}
\end{align}
where we used $\beta > \frac{n}{4}$ to obtain the first inequality.
Similarly, we can obtain
\begin{align*}
|\ell(G(u_{1})) - \ell(G(u_{2}))| \leq C \|u_{1} - u_{2}\|_{L^{2}}.
\end{align*}
At this stage, we can complete the proof as in the integer case easily.
\end{proof}


\section{Conclusion}
In this paper, we firstly use the wavelet representations for function space on periodic domain to construct 
a probability measure called $(\delta, B^{s(\cdot)}_{q(\cdot)})$ measure. It can roughly be seen as 
a counterpart of variable regularization terms. Using the new non-gaussian measure as our priori measure,
we establish ``well-posendess'' theory for inverse problem as did in \cite{Dashti_Stuart}.
Through our study, we give another choice for the priori measure except Gaussian and Besov priori measure
and in addition, it may give an another understanding of variable order space regularization terms.

Secondly, we use our theory to integer order backward diffusion problems and fractional order backward diffusion problems.
Especially, we prove that for time derivative $0< \alpha \leq 1$ and space derivative $\frac{n}{2} < \beta \leq 2$ ($n$ is space dimension)
, the fractional order backward diffusion problem is ``well-posedness'' under Bayesian inverse framework.
This study also reflects that the fractional order problems is not a straight generalization of integer order problems.
When we consider fractional order problems, we must notice that sometimes the fractional order equations have 
totally different regularization properties compare to integer order equations.


\section{Technical lemmas}\label{yin_li}

\subsection{Properties of $\rho^{E}_{B^{t(\cdot)}_{q(\cdot)}}$ appeared in Section \ref{prior_section}}

\begin{lemma}\label{semimodular}
Let $1 \leq q^{-} \leq q(\cdot) \leq q^{+} < \infty$ and $t(\cdot) \in C(\mathbb{T}^{n})$.
Then $\rho^{E}_{B^{t(\cdot)}_{q(\cdot)}}$ is a modular and continuous.
\end{lemma}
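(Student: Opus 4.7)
The plan is to verify in turn the five conditions defining a modular in Definition \ref{definition_semimodular} together with condition (5) and the continuity condition (6). The key point is that the functional has the explicit representation
\begin{align*}
\rho^{E}_{B^{t(\cdot)}_{q(\cdot)}}(u) = \int_{\Omega} \int_{\mathbb{T}^{n}} \sum_{k=0}^{\infty} 2^{k t(x) q(x)} |u_{k}(x,\omega)|^{q(x)} \, dx \, \mathbb{P}(d\omega),
\end{align*}
where $u_k = (\varphi_k \hat{u})^\vee$ depends linearly on $u$, and for each fixed $(x,\omega,k)$ the map $\lambda \mapsto |\lambda|^{q(x)}$ is continuous and monotone on $[0,\infty)$. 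Almost every property reduces to these two facts.

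First I would verify conditions (1)--(3) and the modular condition (5). Condition (1) is immediate because $u=0$ gives $u_k = 0$ for every $k$. For (2), linearity of the Fourier-multiplier block yields $(\lambda u)_k = \lambda u_k$, so $|\lambda|=1$ implies $|(\lambda u)_k|^{q(x)} = |u_k|^{q(x)}$ pointwise. For (3) and (5), if $\rho^{E}(\lambda u) = 0$ for some $\lambda>0$ (in particular for all $\lambda>0$, or just $\lambda=1$ for (5)), then the nonnegative integrand must vanish $\mathbb{P}\otimes dx$-a.e.\ and for every $k$, which forces $u_k \equiv 0$ as an element of $L_{\mathbb{P}}^{q(\cdot)}(\Omega; L^{q(\cdot)}(\mathbb{T}^n))$ for every $k$, hence $u = 0$ in $\mathcal{S}'(\mathbb{T}^n)$ $\mathbb{P}$-a.s.

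Second, I would deal with both the left continuity (4) and the stronger continuity (6) together. Fix $u$ and consider $F(\lambda) := \rho^{E}_{B^{t(\cdot)}_{q(\cdot)}}(\lambda u)$. Using $(\lambda u)_k = \lambda u_k$ gives
\begin{align*}
F(\lambda) = \int_\Omega \int_{\mathbb{T}^n} \sum_{k=0}^{\infty} \lambda^{q(x)} 2^{kt(x)q(x)} |u_k(x,\omega)|^{q(x)} \, dx \, \mathbb{P}(d\omega).
\end{align*}
For any sequence $\lambda_n \uparrow \lambda_0$ monotone convergence (applied to the joint measure on $\Omega \times \mathbb{T}^n \times \mathbb{N}_0$ with counting measure in $k$) gives $F(\lambda_n) \uparrow F(\lambda_0)$, which is (4). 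For a sequence $\lambda_n \downarrow \lambda_0$, if $F(\mu) < \infty$ for some $\mu > \lambda_0$ then the constant integrand coming from $\lambda = \mu$ dominates, and dominated convergence applies to yield $F(\lambda_n) \to F(\lambda_0)$; if instead $F(\mu) = \infty$ for every $\mu > \lambda_0$, then monotonicity forces $F(\lambda_0) = \infty$ as well and continuity at $\lambda_0$ is automatic. Combining the two one-sided statements gives (6), and hence (4) as a special case.

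The main obstacle, such as it is, is bookkeeping: one must justify exchanging the sum over $k$, the integral over $\mathbb{T}^n$, and the expectation over $\Omega$, and handle the case $q^- = 1$ where the modular is not strictly convex. Both are handled cleanly by Tonelli's theorem on the nonnegative integrand and by the pointwise monotonicity of $\lambda \mapsto \lambda^{q(x)}$ on $[0,\infty)$, so no technical issues arise beyond what has already been handled implicitly. (Convexity, although not asserted in the statement, would follow from the convexity of $t \mapsto |t|^{q(x)}$ for $q(x)\geq 1$ combined with linearity of $u \mapsto u_k$.)
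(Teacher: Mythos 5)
Your proposal is correct in substance and follows the same skeleton as the paper for the modular properties: (1)--(2) are immediate, and (3)/(5) follow because each nonnegative term of the modular is dominated by the whole expression, forcing every block $u_{k}$ to vanish and hence $u=0$ --- exactly the paper's argument. You diverge on the continuity part: the paper proves left continuity by truncating the series at a finite $N$, fixing $\epsilon$, and letting the scaling factor tend to $1$, then asserts right continuity ``by a similar method''; you instead obtain left continuity from monotone convergence and right continuity from dominated convergence, which is cleaner and works at every $\lambda_{0}$ rather than just near $1$. One step of yours needs repair, though: in the branch where $F(\mu)=\infty$ for every $\mu>\lambda_{0}$, monotonicity alone does \emph{not} force $F(\lambda_{0})=\infty$, since monotonicity only bounds $F(\lambda_{0})$ from above by $\infty$. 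The correct observation is that because $q^{+}<\infty$ one has, for $\mu>\lambda_{0}>0$,
\begin{align*}
F(\mu) \leq \max\left\{ \left(\tfrac{\mu}{\lambda_{0}}\right)^{q^{-}}, \left(\tfrac{\mu}{\lambda_{0}}\right)^{q^{+}} \right\} F(\lambda_{0}),
\end{align*}
so finiteness of $F(\lambda_{0})$ would propagate to all larger $\mu$, contradicting the assumption; hence $F(\lambda_{0})=\infty$ and that branch is trivial. Note this fix does not cover $\lambda_{0}=0$, where $F(0)=0$ while $F$ may be identically $\infty$ on $(0,\infty)$; continuity on all of $[0,\infty)$ should therefore be read for $u$ in the semimodular space, where the same scaling bound makes $F$ finite everywhere --- a corner case the paper's own sketch glosses over as well. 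Your closing remark on convexity duplicates the paper's separate Lemma on convexity and is consistent with it.
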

\begin{proof}
Properties (1) and (2) in Definition \ref{definition_semimodular} are obviously satisfied.
To prove (3), we suppose that
\begin{align*}
\rho^{E}_{B^{t\cdot}_{q(\cdot)}} (\lambda u) = 0
\end{align*}
for all $\lambda > 0$.
Clearly, for some $k_{0}$
\begin{align*}
\int_{\Omega}\int_{\mathbb{T}^{n}} \lambda^{q(x)} 2^{k_{0} t(x) q(x)} |u_{k_{0}}(x,\omega)|^{q(x)} dx \mathbb{P}(d\omega)
\leq
\rho^{E}_{B^{t\cdot}_{q(\cdot)}} (\lambda u) = 0
\end{align*}
Since $1 \leq q^{-} \leq q(\cdot) \leq q^{+} < \infty$ and $t(\cdot) \in C(\mathbb{T}^{n})$, we easily obtain that
$u_{k_{0}} = 0$. Hence, we obtain $u = 0$.
Let $\mu \rightarrow 1$, we need to prove
$\rho^{E}_{B^{t(\cdot)}_{q(\cdot)}}(\lambda u) \rightarrow \rho^{E}_{B^{t(\cdot)}_{q(\cdot)}}(u)$.
Fix $\epsilon > 0$, choose $N > 0$, let $\mu < 1$ and close to $1$ enough such that
\begin{align*}
\rho^{E}_{B^{t(\cdot)}_{q(\cdot)}}(u) & <
\int_{\Omega} \int_{\mathbb{T}^{n}} \sum_{k = 0}^{N} 2^{k t(x) q(x)} |u_{k}(x,\omega)|^{q(x)} dx \mathbb{P}(d\omega) + \epsilon \\
& < \int_{\Omega} \int_{\mathbb{T}^{n}} \sum_{k = 0}^{N} \lambda^{q(x)} 2^{k t(x) q(x)} |u_{k}(x,\omega)|^{q(x)} dx \mathbb{P}(d\omega) + 2\epsilon \\
& < \rho^{E}_{B^{t(\cdot)}_{q(\cdot)}}(\lambda u) + 2\epsilon.
\end{align*}
Hence, we find $\rho^{E}_{B^{t(\cdot)}_{q(\cdot)}}(\lambda u)$ is left continuous with respect to $\lambda$.
Similar method can give the right continuous.
\end{proof}

\begin{lemma}\label{tec_convex}
Let $q \in \mathcal{P}$, then $\rho^{E}_{B^{t(\cdot)}_{q(\cdot)}}$ is convex.
\end{lemma}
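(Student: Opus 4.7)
The plan is to reduce convexity of $\rho^{E}_{B^{t(\cdot)}_{q(\cdot)}}$ to pointwise convexity of the scalar map $s \mapsto |s|^{q(x)}$ on $\mathbb{R}$, exploiting linearity of the frequency projections and monotonicity of the integral.

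First I would observe that the map $u \mapsto u_{k} = (\varphi_{k}\hat{u})^{\vee}$ is $\mathbb{R}$-linear, since both the Fourier transform and its inverse are linear and multiplication by $\varphi_{k}$ is linear. Therefore, for any $u,v \in L^{q(\cdot)}_{\mathbb{P}}(\Omega;B^{t(\cdot)}_{q(\cdot)})$ and $\theta \in [0,1]$, one has $(\theta u + (1-\theta)v)_{k}(x,\omega) = \theta\, u_{k}(x,\omega) + (1-\theta)\, v_{k}(x,\omega)$ for all $k \in \mathbb{N}_{0}$ and almost every $(x,\omega)$.

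Next I would use the hypothesis $q \in \mathcal{P}$, which gives $q(x) \geq 1$ for a.e.\ $x \in \mathbb{T}^{n}$. Hence for every such $x$ the map $s \mapsto |s|^{q(x)}$ is convex on $\mathbb{R}$, so that
\begin{equation*}
|\theta\, u_{k}(x,\omega) + (1-\theta)\, v_{k}(x,\omega)|^{q(x)}
\leq \theta\, |u_{k}(x,\omega)|^{q(x)} + (1-\theta)\, |v_{k}(x,\omega)|^{q(x)}.
\end{equation*}
Multiplying by the non-negative weight $2^{kt(x)q(x)}$ preserves this inequality pointwise in $(x,\omega,k)$.

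Finally, summing over $k \geq 0$ and integrating against the positive product measure $dx \otimes \mathbb{P}(d\omega)$ on $\mathbb{T}^{n}\times \Omega$ (using Tonelli to justify interchanging the sum and the integrals, since the integrand is non-negative) yields
\begin{equation*}
\rho^{E}_{B^{t(\cdot)}_{q(\cdot)}}(\theta u + (1-\theta)v)
\leq \theta\, \rho^{E}_{B^{t(\cdot)}_{q(\cdot)}}(u) + (1-\theta)\, \rho^{E}_{B^{t(\cdot)}_{q(\cdot)}}(v),
\end{equation*}
which is exactly the required convexity. There is no real obstacle here: the only mild points to be careful about are the measurability of the $u_{k}$ in $(x,\omega)$ and the use of Tonelli's theorem, both of which follow from the standing hypotheses on $u$ and the non-negativity of the integrand.
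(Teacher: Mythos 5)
Your proof is correct, and it is essentially the same elementary argument as the paper's: everything reduces to a pointwise estimate under the integral, using only $q(x)\geq 1$, the non-negativity of the weights $2^{kt(x)q(x)}$, and Tonelli. The only difference is that you apply convexity of $s\mapsto|s|^{q(x)}$ in a single step, giving $|\theta u_{k}+(1-\theta)v_{k}|^{q(x)}\leq \theta|u_{k}|^{q(x)}+(1-\theta)|v_{k}|^{q(x)}$ directly, whereas the paper splits the bound into two stages, first claiming $\rho^{E}(\theta f+(1-\theta)g)\leq \rho^{E}(\theta f)+\rho^{E}((1-\theta)g)$ and then $\rho^{E}(\theta f)\leq\theta\rho^{E}(f)$; the first of these intermediate inequalities, read literally as a pointwise bound $|\theta a+(1-\theta)b|^{q}\leq|\theta a|^{q}+|(1-\theta)b|^{q}$, can fail for $q>1$ (take $a=b$, $\theta=1/2$, $q=2$), so your one-step convexity inequality is in fact the cleaner and fully rigorous way to reach the same conclusion.
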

\begin{proof}
Let $\theta \in (0,1)$, then
\begin{align*}
\rho^{E}_{B^{t(\cdot)}_{q(\cdot)}}(\theta f + (1-\theta)g) = &
\int_{\Omega}\int_{\mathbb{T}^{n}} \sum_{k=0}^{\infty} 2^{kt(x)q(x)} |\theta f_{k}(x,\omega) + (1-\theta)g_{k}(x,\omega)|^{q(x)} dx\mathbb{P}(d\omega) \\
\leq & \rho^{E}_{B^{t(\cdot)}_{q(\cdot)}}(\theta f) + \rho^{E}_{B^{t(\cdot)}_{q(\cdot)}}((1-\theta)g)   \\
\leq & \theta \rho^{E}_{B^{t(\cdot)}_{q(\cdot)}}(f) + (1-\theta) \rho^{E}_{B^{t(\cdot)}_{q(\cdot)}}(g).
\end{align*}
\end{proof}
In our case the parameter $p(\cdot)$ in \cite{Almeida_Hasto} is equal to $q(\cdot)$, so we only need
$1 \leq q^{-}$ not $2 \leq q^{-}$ indicated by Theorem 3.6 in \cite{Almeida_Hasto}.

\subsection{Proof of Lemma \ref{wavelet_characterization_measure}}

Concerning the proof of Lemma \ref{wavelet_characterization_measure}, we give the following two important Lemmas.
\begin{lemma}\label{basic_measure}
Let $1\leq q^{-} \leq q(\cdot) \leq q^{+} < \infty$, $\delta > 0$. For any sequence $\{ g_{j} \}_{j = 0}^{\infty}$
of nonnegative measurable functions on $\mathbb{T}^{n}$ denote
\begin{align*}
G_{j}(x,\omega) = \sum_{k = 0}2^{-|k-j|\delta}g_{k}(x,\omega).
\end{align*}
Then
\begin{align*}
\|\{ G_{j} \}_{j=0}^{\infty}\|_{L^{q(\cdot)}(\ell^{q(\cdot)}_{E})}
\leq C \|\{ g_{j} \}_{j=0}^{\infty}\|_{L^{q(\cdot)}(\ell^{q(\cdot)}_{E})},
\end{align*}
where
\begin{align*}
\|\{ g_{j} \}_{j=0}^{\infty}\|_{L^{q(\cdot)}(\ell^{q(\cdot)}_{E})} =
\left\| \left( \sum_{j=0}^{\infty} \mathbb{E}\left( g_{j}(\cdot)^{q(\cdot)} \right) \right)^{1/q(\cdot)} \right\|_{L^{q(\cdot)}(\mathbb{T}^{n})},
\end{align*}
and
\begin{align*}
\mathbb{E}\left( g_{j}(x)^{q(x)}\right) = \int_{\Omega} \left(g_{j}(x,\omega)\right)^{q(x)} \mathbb{P}(d\omega).
\end{align*}
\end{lemma}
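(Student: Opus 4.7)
The plan is to prove the desired inequality at the level of the underlying modular and then transfer it to the Luxemburg norm by scaling. Define
\begin{align*}
\rho(\{h_{j}\}) := \int_{\mathbb{T}^{n}} \sum_{j=0}^{\infty} \mathbb{E}\bigl(h_{j}(x,\cdot)^{q(x)}\bigr)\, dx,
\end{align*}
which is exactly $\rho_{L^{q(\cdot)}}$ evaluated at the function $\bigl(\sum_{j}\mathbb{E}(h_{j}^{q(\cdot)})\bigr)^{1/q(\cdot)}$. Since $\rho$ is positively $q(x)$-homogeneous pointwise in $x$, once I establish the modular bound $\rho(\{G_{j}\}) \leq C\,\rho(\{g_{j}\})$ for a universal constant $C$, the norm inequality will follow by the standard Luxemburg-norm scaling argument recorded, for example, in Lemma 3.2.5 of \cite{variable_book} (rescale $\{g_{j}\}$ so that $\rho(\{g_{j}\}) \leq 1$, then extract a $(1/q(x))$-power).

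The main estimate is pointwise in $(x,\omega)$ and uses classical Hölder with constant exponents. For fixed $x$, split the weight as $2^{-|k-j|\delta} = 2^{-|k-j|\delta/q'(x)} \cdot 2^{-|k-j|\delta/q(x)}$ with $q'(x) = q(x)/(q(x)-1)$. Since $1 \leq q^{-} \leq q(x) \leq q^{+} < \infty$, standard discrete Hölder in the index $k$ yields
\begin{align*}
G_{j}(x,\omega)^{q(x)} \leq \Bigl(\sum_{k=0}^{\infty} 2^{-|k-j|\delta}\Bigr)^{q(x)-1} \sum_{k=0}^{\infty} 2^{-|k-j|\delta} g_{k}(x,\omega)^{q(x)} \leq C_{0} \sum_{k=0}^{\infty} 2^{-|k-j|\delta} g_{k}(x,\omega)^{q(x)},
\end{align*}
where $C_{0} := \bigl(2/(1-2^{-\delta})\bigr)^{q^{+}-1}$ is uniform in $(x,\omega,j)$.

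Integrating the pointwise inequality against $\mathbb{P}(d\omega)$ preserves the bound, and then I would sum in $j$ and invoke Fubini to swap $\sum_{j}$ and $\sum_{k}$. The supremum $\sup_{k}\sum_{j=0}^{\infty} 2^{-|k-j|\delta}$ is again bounded by $2/(1-2^{-\delta})$, so I obtain
\begin{align*}
\sum_{j=0}^{\infty}\mathbb{E}\bigl(G_{j}(x,\cdot)^{q(x)}\bigr) \leq C \sum_{k=0}^{\infty}\mathbb{E}\bigl(g_{k}(x,\cdot)^{q(x)}\bigr)
\end{align*}
pointwise in $x$. Integrating over $\mathbb{T}^{n}$ gives the modular inequality $\rho(\{G_{j}\}) \leq C\,\rho(\{g_{j}\})$, from which the norm inequality follows as explained above.

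The only point that requires mild care is that $q(\cdot)$ varies with $x$, but since the Hölder step is performed for each fixed $x$ in the discrete index $k$, no variable-exponent machinery beyond the uniform bound $q^{+}<\infty$ (to control $C_{0}$) is needed. I therefore expect no serious obstacle; the main work is bookkeeping the two swaps (expectation/pointwise and $\sum_{j}$/$\sum_{k}$) and the one passage from modular to Luxemburg norm.
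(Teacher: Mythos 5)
Your proposal is correct and follows essentially the same route as the paper: a pointwise (in $x$) estimate obtained by exploiting the summable geometric weight $2^{-|k-j|\delta}$ via discrete H\"older with the constant exponent $q(x)$, a Tonelli swap of the $j$ and $k$ sums and the expectation, and then passage to the $L^{q(\cdot)}$ norm; the paper's own proof is just a terser version of this same computation. Your writeup is in fact more careful than the paper's (which compresses the weighted H\"older step and the modular-to-norm passage into a single displayed chain), but there is no substantive difference in method.
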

\begin{proof}
It is obviously that we only need to give the following estimates
\begin{align*}
\left( \sum_{j} \mathbb{E}\left( g_{j}(x)^{q(x)} \right) \right)^{1/q(x)} &
\leq \left( \int_{\Omega} ( \sum_{j=0}^{\infty} 2^{-|j|\delta} )^{q(x)}
\sum_{j=0}^{\infty}|g_{j}(x,\omega)|^{q(x)} \mathbb{P}(d\omega) \right)^{1/q(x)}   \\
& \leq C \left( \sum_{j=0}^{\infty} \int_{\Omega} |g_{j}(x,\omega)|^{q(x)} \mathbb{P}(d\omega) \right)^{1/q(x)} \\
& \leq C \left( \sum_{j=0}^{\infty}\mathbb{E}\left( g_{j}(x) \right)^{q(x)} \right)^{1/q(x)}.
\end{align*}
\end{proof}
\begin{lemma}\label{basic_maximal_instead}
Let $q(\cdot) \in C^{log}(\mathbb{T}^{n})$ with $1 < q^{-} \leq q(\cdot) \leq q^{+} < \infty$. Then the inequality
\begin{align*}
\left\| \{ \eta_{v,R}*f_{j} \}_{j \in \mathbb{N}_{0}} \right\|_{L^{q(\cdot)}(\ell^{q(\cdot)}_{E})}
\leq C \left\| \{ f_{j} \}_{j \in \mathbb{N}_{0}} \right\|_{L^{q(\cdot)}(\ell^{q(\cdot)}_{E})}
\end{align*}
holds for every sequence $\{ f_{j}(x,\omega) \}_{j \in \mathbb{N}_{0}}$ of $L_{loc}^{1}$-functions for variable $x$
and $\mathbb{P}$-measurable functions for variable $\omega$.
\end{lemma}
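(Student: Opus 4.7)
The plan is to reduce the claim to boundedness of the Hardy--Littlewood maximal operator on the variable-exponent Lebesgue space $L^{q(\cdot)}(\mathbb{T}^n)$, and then to lift the estimate to the mixed probability space by Fubini. The prior lemma, Lemma~\ref{basic_measure}, has already reduced analogous averaging operations (summation with geometric weights) to scalar estimates in $L^{q(\cdot)}(\mathbb{T}^n)$, and the present lemma is the maximal-function analogue.

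First I would obtain the pointwise domination
\[
\bigl|\eta_{v,R}*f_j(x,\omega)\bigr| \le C\,Mf_j(\cdot,\omega)(x),
\]
valid for a.e.\ $x$ and $\mathbb{P}$-a.e.\ $\omega$, where $M$ is the (uncentered) Hardy--Littlewood maximal operator acting only in the spatial variable, and $C$ is independent of $v$, $j$, $\omega$. This is the classical majorant principle: once $R > n$, the kernel $\eta_{v,R}(y) = c\,2^{vn}(1+2^v|y|)^{-R}$ has a radially decreasing $L^1$-majorant with $L^1$-norm independent of $v$. Then I would invoke Diening's boundedness of $M$ on $L^{q(\cdot)}(\mathbb{T}^n)$, guaranteed by $q\in C^{\log}$ and $q^{-}>1$ (see \cite{variable_book}), in its modular form
\[
\int_{\mathbb{T}^n} (Mg(x))^{q(x)}\,dx \le C \int_{\mathbb{T}^n} |g(x)|^{q(x)}\,dx
\]
for $g$ normalized so that $\rho_{L^{q(\cdot)}}(g)\le 1$; the general case follows by the standard $\lambda$-scaling used to define the Luxemburg norm. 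Combined with the pointwise bound, this yields $\rho_{L^{q(\cdot)}}(\eta_{v,R}*f_j(\cdot,\omega)) \le C\,\rho_{L^{q(\cdot)}}(f_j(\cdot,\omega))$ uniformly in $j$ and $\omega$.

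To assemble the vector-valued/probabilistic estimate, I would write the target modular in Fubini-expanded form
\[
\sum_{j=0}^{\infty} \int_{\Omega} \rho_{L^{q(\cdot)}}\!\bigl(\eta_{v,R}*f_j(\cdot,\omega)\bigr)\,\mathbb{P}(d\omega),
\]
apply the uniform estimate term-by-term, and recognize the result as $C$ times $\sum_{j}\int_{\Omega} \rho_{L^{q(\cdot)}}(f_j(\cdot,\omega))\,\mathbb{P}(d\omega)$. The passage from modular control to the Luxemburg norm is routine, using the semimodular framework of Section~\ref{modular} together with the $\max(\lambda^{q^{+}},\lambda^{q^{-}})$ device already exploited in the proof of Theorem~\ref{bayesian_well_defined}.

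The main obstacle is converting the norm boundedness of $M$ on $L^{q(\cdot)}(\mathbb{T}^n)$ into a clean term-by-term modular inequality that can be summed over the countable index $j$ and integrated over $\Omega$. In the constant-exponent case this step is trivial, because norm and modular differ by a simple power; for variable exponents Diening's modular estimate typically carries an additive correction involving a Muckenhoupt-type weight, and one has to check that this correction is uniformly controlled after the sum in $j$ and the integral in $\omega$. The fact that our domain $\mathbb{T}^n$ has finite measure and the exponent $q$ does not depend on $j$ will be essential in absorbing the correction. Making this absorption rigorous, via the finite-measure modular estimates of \cite{variable_book}, is the technical heart of the argument.
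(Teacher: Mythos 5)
There is a genuine gap at the central step of your reduction. You claim the uniform, term-by-term multiplicative modular inequality $\rho_{L^{q(\cdot)}}(\eta_{v,R}*f_j(\cdot,\omega)) \leq C\,\rho_{L^{q(\cdot)}}(f_j(\cdot,\omega))$, uniformly in $j$ and $\omega$, by combining the majorant bound $|\eta_{v,R}*f_j|\leq C\,Mf_j$ with a ``modular form'' of the boundedness of $M$ on $L^{q(\cdot)}$. No such multiplicative modular inequality exists for non-constant $q(\cdot)$: by Lerner's theorem on modular inequalities the estimate $\int (Mg)^{q(x)}dx\leq C\int |g|^{q(x)}dx$ forces $q$ to be constant, and the normalized key estimate only gives an additive form, roughly $\rho_{L^{q(\cdot)}}(Mg)\leq C\rho_{L^{q(\cdot)}}(g)+C$ when $\rho_{L^{q(\cdot)}}(g)\leq 1$. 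One sees the failure of the multiplicative form directly with $g=\epsilon\chi_{B}$, $B$ placed where $q$ is large while $Mg$ spreads mass to where $q$ is smaller: the ratio of modulars blows up as $\epsilon\to 0$. This is fatal in the present setting, because the mixed modular is $\sum_{j}\int_{\Omega}\int_{\mathbb{T}^{n}}|\cdot|^{q(x)}dx\,\mathbb{P}(d\omega)$, so the individual modulars $\rho_{L^{q(\cdot)}}(f_j(\cdot,\omega))$ must be small for large $j$ --- exactly the regime where only the additive estimate is available --- and a fixed additive constant per $(j,\omega)$ summed over $j\in\mathbb{N}_{0}$ diverges. The finite measure of $\mathbb{T}^{n}$ cannot absorb this, since the divergence lives in the $j$-sum, not in $x$. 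This is the same obstruction behind the known failure of the vector-valued maximal inequality in $L^{p(\cdot)}(\ell^{q(\cdot)})$, which is precisely why the $\eta_{\nu,m}$-convolution estimate was introduced as its replacement.

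The fix requires keeping the scale information that is destroyed the moment you dominate $\eta_{v,R}*f_j$ by the full Hardy--Littlewood maximal function. The paper (following Lemma 5.4 of Diening--H\"ast\"o--Roudenko) expands $\eta_{\nu,m}*f_{\nu}$ over dyadic cubes $Q\in\mathcal{D}_{\nu-j}$ with weights $2^{-j(m-n)}$ and applies the key estimate $(M_{Q}f_{\nu})^{q(x)}\lesssim \bigl(M_{Q}(|f_{\nu}|^{q(\cdot)/q^{-}})\bigr)^{q^{-}}+\mathrm{error}$, in which the error carries a factor of type $\min(|Q|^{s},1)$, i.e.\ geometric decay in the scale $\nu-j$; after normalizing the whole mixed modular, these errors are summable over both $j$ and the sequence index (this is the term $II$, taken verbatim from the cited proof). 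The main term is then handled not by any variable-exponent maximal bound but by the classical constant-exponent maximal inequality in $L^{q^{-}}$ applied to $|f_{\nu}|^{q(\cdot)/q^{-}}$, with the expectation over $\Omega$ inserted by Fubini --- the only genuinely new ingredient relative to the deterministic lemma. Without that scale-dependent decay, your additive corrections have no smallness in $j$, so the absorption you defer to as ``the technical heart'' cannot be carried out, and the proposal as written does not prove the lemma.
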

\begin{proof}
The proof of this lemma similar to the proof Lemma 5.4 in \cite{variable_tribel}, so here we only give the different
part. Let $\mathcal{D}_{i}$ stands for all dyadic cubes with side length $2^{-i}$ and
$\eta_{\nu m}(x) = \frac{2^{n\nu}}{(1+2^{\nu}|x|)^m}.$
As in \cite{variable_tribel}, we need the following estimate
\begin{align*}
& \int_{\mathbb{T}^{n}} \sum_{\nu = 0}^{\infty} \int_{\Omega} |\eta_{\nu m} * f_{\nu} |^{q(x)} \mathbb{P}(d\omega) dx \\
\leq & \int_{\mathbb{T}^{n}} \sum_{\nu =0}^{\infty} \left( \int_{\Omega} \sum_{j = 0}^{\infty}
2^{-j(m-n)} \sum_{Q \in \mathcal{D}_{\nu-j}} \chi_{3Q}(x) M_{Q}f_{\nu} \mathbb{P}(d\omega) \right)^{q(x)} dx    \\
\leq & C \int_{\mathbb{T}^{n}} \sum_{\nu = 0}^{\infty} \int_{\Omega} \sum_{j = 0}^{\infty}
2^{-j(m-n)} \sum_{Q \in \mathcal{D}_{\nu-j}} \chi_{3Q}(x) \left( M_{Q}\left( |f_{\nu}|^{q(x)/q^{-}}\right) \right)^{q^{-}}
\mathbb{P}(d\omega) dx + II \\
= & I + II,
\end{align*}
where $II$ is exactly the same as in the proof Lemma 5.4 in \cite{variable_tribel}. Next, we only give estimate for term $I$.
\begin{align*}
I \leq & C \int_{\mathbb{T}^{n}} \sum_{\nu =0}^{\infty} \int_{\Omega} \left( M(|f_{\nu}|^{q(x)/q^{-}}) \right)^{q^{-}} \mathbb{P}(d\omega)
\sum_{j=0}^{\infty}2^{-j(m-n)} \sum_{Q \in \mathcal{D}_{\nu - j}}\chi_{3Q}(x) dx    \\
\leq & C \int_{\mathbb{T}^{n}} \sum_{\nu = 0}^{\infty} \mathbb{E} \left( |f_{\nu}(x)|^{q(x)} \right) dx < \infty.
\end{align*}
With these estimates, it is easy to recover the whole proof.
\end{proof}
With the two Lemmas in hand, following the proof of Theorem 4.4, Theorem 4.5, we can get the local
means characterizations of $L_{\mathbb{P}}^{q(\cdot)}(\Omega; B^{s(\cdot)}_{q(\cdot)})$ by using
our Lemma \ref{basic_measure} and Lemma \ref{basic_maximal_instead} instead of Lemma 4.2 and
Lemma 4.3 in \cite{2microlocal}. Replacing Lemma 5 and Lemma 9 in \cite{Kermpka} by our Lemma \ref{basic_measure}
and Lemma \ref{basic_maximal_instead}, we can mimic the proof of Corollary 2 and Corollary 3 in \cite{Kermpka} to
give the proof of Lemma \ref{wavelet_characterization_measure}.
Considering the proof is so long and have no new ingredient except Lemma \ref{basic_measure}
and Lemma \ref{basic_maximal_instead}, we omit it here.


\section{Appendix}

\begin{proposition}\label{definition_wavelet}\cite{Kermpka}
(i) There are a real scaling function $\varphi_{F} \in \mathcal{S}(\mathbb{R})$ and a real associated wavelet
$\varphi_{M} \in \mathcal{S}(\mathbb{R})$ such that their Fourier transforms have compact supports,
$\hat{\varphi}_{F}(0) = 1$ and
\begin{align*}
\mathrm{supp}\, \hat{\varphi}_{M} \subset \left[ -\frac{8}{3}\pi, -\frac{2}{3}\pi \right] \cup
\left[ \frac{2}{3}\pi, \frac{8}{3}\pi \right].
\end{align*}
(ii) For any $k \in \mathbb{N}$ there exist a real compactly supported scaling function $\varphi_{F} \in C^{k}(\mathbb{R})$ and
a real compactly supported associated wavelet $\varphi_{M} \in C^{k}(\mathbb{R})$ such that $\hat{\varphi}_{F}(0) = 1$ and
\begin{align*}
\int_{\mathbb{R}} x^{\ell} \varphi_{M}(x)dx = 0 \quad \text{for all }\ell \in \{ 0,1,\cdots, k-1 \}.
\end{align*}
In both cases we have that $\{\varphi_{vm}: v\in \mathbb{N}\cup{0}, m \in \mathbb{Z}\}$ is an orthonormal
basis in $L^{2}(\mathbb{R})$ where
\begin{align*}
\varphi_{vm}(t) := \left\{
\begin{array}{c}
  \varphi_{F}(t-m), \quad \text{if }v = 0, m\in \mathbb{Z} \\
  2^{\frac{v-1}{2}}\varphi_{M}(2^{v-1}t-m), \quad \text{if }v\in \mathbb{N}, m\in \mathbb{Z}
\end{array}
\right.
\end{align*}
and the functions $\varphi_{M}$, $\varphi_{F}$ are according to (i) or (ii).
\end{proposition}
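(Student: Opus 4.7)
The plan is to construct both wavelet families via a multiresolution analysis (MRA) on the Fourier side, deriving the associated wavelet from the scaling function through the standard two-scale relation $\hat{\varphi}_M(2\xi) = e^{-i\xi}\overline{m_0(\xi+\pi)}\hat{\varphi}_F(\xi)$, where $m_0$ is the low-pass filter satisfying $\hat{\varphi}_F(2\xi) = m_0(\xi)\hat{\varphi}_F(\xi)$. Orthonormality of the resulting basis $\{\varphi_{vm}\}$ in $L^2(\mathbb{R})$ is then a consequence of the standard MRA theory once $m_0$ satisfies the quadrature mirror filter condition $|m_0(\xi)|^2+|m_0(\xi+\pi)|^2=1$ and $m_0(0)=1$.

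For part (i) (Meyer wavelet), I would first fix a smooth bump $\nu\in C^\infty(\mathbb{R})$ with $\nu=0$ on $(-\infty,0]$, $\nu=1$ on $[1,\infty)$, and $\nu(x)+\nu(1-x)=1$. Then define
\begin{align*}
\hat{\varphi}_F(\xi) := \begin{cases} 1 & |\xi|\leq 2\pi/3,\\ \cos\bigl(\tfrac{\pi}{2}\nu\bigl(\tfrac{3|\xi|}{2\pi}-1\bigr)\bigr) & 2\pi/3<|\xi|\leq 4\pi/3,\\ 0 & |\xi|>4\pi/3.\end{cases}
\end{align*}
The complementary symmetry of $\nu$ forces $|\hat{\varphi}_F|^2(\xi)+|\hat{\varphi}_F|^2(\xi+2\pi)$-type identities that are equivalent to the QMF condition on the induced $m_0$, so the Fourier support of $\hat{\varphi}_M$ lives precisely in $[-8\pi/3,-2\pi/3]\cup[2\pi/3,8\pi/3]$. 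Smoothness of $\nu$ gives $\hat{\varphi}_F,\hat{\varphi}_M\in\mathcal{S}(\mathbb{R})$ (compactly supported smooth implies Schwartz), and by Paley--Wiener the inverse transforms lie in $\mathcal{S}(\mathbb{R})$.

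For part (ii) (Daubechies wavelet), I would apply the Riesz lemma to produce a trigonometric polynomial $m_0(\xi)=\bigl(\tfrac{1+e^{-i\xi}}{2}\bigr)^{N}\mathcal{L}(e^{-i\xi})$ of the form required for $N$ vanishing moments: the factor $\bigl(\tfrac{1+e^{-i\xi}}{2}\bigr)^{N}$ encodes a zero of order $N$ at $\xi=\pi$, while $\mathcal{L}$ is obtained by spectral factorization of the polynomial $P_N(y)=\sum_{k=0}^{N-1}\binom{N-1+k}{k}y^k$ that uniquely solves the QMF orthogonality constraint $|m_0(\xi)|^2+|m_0(\xi+\pi)|^2=1$. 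Defining $\hat{\varphi}_F(\xi):=\prod_{j=1}^{\infty}m_0(\xi/2^j)$, the infinite product converges uniformly on compacts, and the resulting $\varphi_F$ is compactly supported because $m_0$ is a trigonometric polynomial (support in $[0,2N-1]$ by the Cohen--Daubechies--Vial analysis). The vanishing moments $\int x^\ell\varphi_M(x)\,dx=0$ for $\ell<N-1$ follow by differentiating $\hat{\varphi}_M$ at the origin and using the zero of order $N$ of $m_0$ at $\pi$.

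The main obstacle is the $C^k$ regularity claim in (ii): having $N$ vanishing moments alone does not guarantee $\varphi_M\in C^k$, and one must choose $N=N(k)$ sufficiently large. The standard route is to estimate the decay of $\hat{\varphi}_F$ via the factorization $|\hat{\varphi}_F(\xi)|\leq C(1+|\xi|)^{-N+\log_2 B}$, where $B=\sup_\xi|\mathcal{L}(e^{-i\xi})|$, and then invoke the Bernstein-type criterion: if $(1+|\xi|)^{k+1}\hat{\varphi}_F\in L^1$, then $\varphi_F\in C^k$. Since $\log_2 B$ grows strictly slower than $N$ for the Daubechies filters (a nontrivial computation going back to Daubechies's original estimates), one can pick $N$ large depending on $k$ so that the exponent $N-\log_2 B>k+1$, which secures the desired smoothness. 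Orthonormality of the full family in both cases then follows from Mallat's theorem applied to the MRA $\{V_j\}$ generated by $\varphi_F$.
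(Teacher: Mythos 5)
The paper does not actually prove this proposition: it is quoted verbatim (as a known result) from Kempka's paper, and ultimately rests on the classical Meyer and Daubechies constructions, so there is no internal argument to compare yours against. Your proposal reconstructs exactly that classical route — an MRA built on the Fourier side, with the Meyer scaling function defined through a smooth partition function $\nu$ and the Daubechies filter obtained by Riesz spectral factorization of $P_N$, followed by the decay estimate $|\hat{\varphi}_F(\xi)|\leq C(1+|\xi|)^{-N+\log_2 B}$ to get $C^k$ regularity — and in outline this is correct and is the standard textbook proof (Daubechies, \emph{Ten Lectures}, Ch.~6--7; Meyer). What the paper's choice buys is simply brevity: the result is cited because it is classical; what your route buys is self-containedness, at the cost of the technical work you sketch.

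Three details deserve care if you were to write this out. First, the QMF identity $|m_0(\xi)|^2+|m_0(\xi+\pi)|^2=1$ together with $m_0(0)=1$ does \emph{not} by itself give orthonormality of the integer translates of $\varphi_F$ (the stretched Haar filter is the standard counterexample); you need Cohen's or Lawton's criterion, which here is immediate since the Daubechies $m_0$ vanishes only at $\xi=\pi$ (mod $2\pi$), and in the Meyer case orthonormality is read off directly from $\sum_k|\hat{\varphi}_F(\xi+2\pi k)|^2=1$. Second, the vanishing moments run over $\ell=0,\dots,N-1$ (your ``$\ell<N-1$'' drops one); this matters only notationally since $N=N(k)$ is chosen large. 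Third, with the crude bound $B=\sup_\xi|\mathcal{L}(e^{-i\xi})|=\sqrt{P_N(1)}$ the exponent $N-\log_2 B$ grows only like $\tfrac14\log_2 N$, so it does tend to infinity and your argument closes, but $N(k)$ is then enormous; Daubechies' sharper estimate (using $\sup_\xi|\mathcal{L}(e^{-i\xi})\mathcal{L}(e^{-2i\xi})|^{1/2}$ or the transfer-operator bound) gives regularity growing linearly in $N$ and is the form usually invoked. None of these is a fatal gap, but the orthonormality point in particular should not be waved through as ``standard MRA theory.''
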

The wavelets in the first part of the above proposition are called Meyer wavelets. They do not have
a compact support but they are fast decaying functions and $\varphi_{M}$ has infinitely many moment conditions.
The wavelets from the second part of the above proposition are called Daubechies wavelets.
Here the functions $\varphi_{M}, \varphi_{F}$ do have compact support, but they only have limited
smoothness.

\begin{definition}\label{atom_periodic}\cite{Kermpka}
Let $s(\cdot) \in L^{\infty} \cap C_{loc}^{log}(\mathbb{T}^{n})$, $0< q \leq \infty$ and $p(\cdot) \in \mathcal{P}(\mathbb{T}^{n})$
with $0 < p^{-} \leq p^{+} \leq \infty$.
Denote $\mathbb{M}_{j} = \{ m: m = 0,1,2,\cdots,2^j-1\}$.

(i) Then
\begin{align*}
\tilde{b}_{p(\cdot),q}^{s(\cdot)} := \left\{ \lambda = \{ \lambda_{Gm}^{j} \}_{j \in \mathbb{N}_{0}, G\in G^{j}, m \in \mathbb{M}_{j}}
: \, \| \lambda \|_{\tilde{b}_{p(\cdot),q}^{s(\cdot)}} < \infty \right\}
\end{align*}
where
\begin{align*}
\| \lambda \|_{\tilde{b}_{p(\cdot),q}^{s(\cdot)}} = \left( \sum_{j = 0}^{\infty} \sum_{G \in G^{j}}
\left\| \sum_{m \in \mathbb{M}_{j}} 2^{js(2^{-j}m)}|\lambda_{Gm}|^{j}\chi_{jm}(\cdot) \right\|_{L^{p(\cdot)}(\mathbb{T}^{n})}^{q} \right)^{1/q}.
\end{align*}

(ii) For $p^{+} < \infty$, we define
\begin{align*}
\tilde{f}_{p(\cdot),q(\cdot)}^{s(\cdot)} := \left\{ \lambda = \{ \lambda_{Gm}^{j} \}_{j \in \mathbb{N}_{0}, G \in G^{j}, m \in \mathbb{M}_{j}}
: \, \|\lambda\|_{\tilde{f}_{p(\cdot),q(\cdot)}^{s(\cdot)}} < \infty \right\},
\end{align*}
where
\begin{align*}
\|\lambda\|_{\tilde{f}_{p(\cdot),q(\cdot)}^{s(\cdot)}} = \left\| \left(
\sum_{j = 0}^{\infty} \sum_{G \in G^{j}} \sum_{m \in \mathbb{M}_{j}} 2^{jqs(2^{-j}m)} |\lambda_{Gm}^{j}|^{q(\cdot)}
\chi_{jm}(\cdot) \right)^{1/q(\cdot)} \right\|_{L^{p(\cdot)}(\mathbb{T}^{n})}
\end{align*}
with $q(\cdot) \in \mathcal{P}(\mathbb{T}^{n})$.
\end{definition}

\begin{theorem}\label{low_of_large_number}
Let $X_{1}, X_{2}, \cdots$ be pairwise independent identically distributed random variables with $\mathbb{E}|X_{i}| < \infty$.
Let $\mathbb{E}X_{i} = \mu$ and $S_{n} = X_{1} + X_{2} + \cdots + X_{n}$. Then
$S_{n}/n \rightarrow \mu$ a.s. as $n\rightarrow \infty$.
\end{theorem}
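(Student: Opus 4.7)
The plan is to follow Etemadi's classical truncation-plus-subsequence argument, which is tailor-made for the pairwise independence hypothesis (the conclusion is strictly stronger than what one gets from Kolmogorov's original proof via fourth moments or maximal inequalities, both of which require full independence).

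First, since $\mathbb{E}|X_i|<\infty$, write $X_i = X_i^{+} - X_i^{-}$ and argue on each piece separately; each is a pairwise independent, identically distributed sequence of nonnegative integrable variables with mean $\mu^{\pm}$. So I may assume $X_i \geq 0$ throughout. Introduce the truncations $Y_i = X_i\,\mathbf{1}_{\{X_i \leq i\}}$ and $T_n = \sum_{i=1}^{n} Y_i$. From the tail bound
\[
\sum_{i=1}^{\infty}\mathbb{P}(X_i > i) = \sum_{i=1}^{\infty}\mathbb{P}(X_1 > i) \leq \mathbb{E} X_1 < \infty,
\]
Borel-Cantelli gives $X_i = Y_i$ eventually, almost surely, so $(S_n - T_n)/n \to 0$ a.s. By dominated convergence $\mathbb{E} Y_i \to \mu$, hence its Ces\`aro average $\mathbb{E} T_n / n \to \mu$, and the task reduces to showing $(T_n - \mathbb{E} T_n)/n \to 0$ a.s.

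Second, exploit pairwise independence, which suffices for additivity of variance: $\mathrm{Var}(T_n) = \sum_{i=1}^{n}\mathrm{Var}(Y_i)$. A Fubini-style rearrangement gives the key estimate
\[
\sum_{i \geq 1}\frac{\mathrm{Var}(Y_i)}{i^{2}} \leq \sum_{i \geq 1}\frac{\mathbb{E} Y_i^{2}}{i^{2}} \leq C\,\mathbb{E} X_1 < \infty.
\]
Fix $\alpha > 1$ and set $n_k = \lfloor \alpha^{k}\rfloor$. Chebyshev combined with swapping summations yields
\[
\sum_{k \geq 1}\mathbb{P}\!\left(\left|\frac{T_{n_k}-\mathbb{E} T_{n_k}}{n_k}\right|>\varepsilon\right) \leq \frac{1}{\varepsilon^{2}}\sum_{i \geq 1}\mathrm{Var}(Y_i)\sum_{k:\,n_k \geq i}\frac{1}{n_k^{2}} \leq \frac{C(\alpha)}{\varepsilon^{2}}\sum_{i \geq 1}\frac{\mathrm{Var}(Y_i)}{i^{2}} < \infty,
\]
since the inner geometric sum is $O(1/i^{2})$. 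A second Borel-Cantelli then gives $T_{n_k}/n_k \to \mu$ a.s.

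Finally, fill in between subsequence points using the monotonicity of $T_n$ (which relies on $Y_i \geq 0$): for $n_k \leq n < n_{k+1}$,
\[
\frac{n_k}{n_{k+1}}\cdot\frac{T_{n_k}}{n_k} \leq \frac{T_n}{n} \leq \frac{n_{k+1}}{n_k}\cdot\frac{T_{n_{k+1}}}{n_{k+1}}.
\]
Sending $k \to \infty$ bounds $\limsup$ and $\liminf$ of $T_n/n$ between $\mu/\alpha$ and $\alpha\mu$ almost surely, and letting $\alpha \downarrow 1$ along a countable sequence pinches $T_n/n$ at $\mu$. Combining with the first step recovers $S_n/n \to \mu$ a.s. The main obstacle is the second step: the variance estimate must be strong enough to survive Borel-Cantelli while using only pairwise (not full) independence. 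This is the essential content of Etemadi's refinement, and the reason one cannot simply quote Kolmogorov's classical strong law here.
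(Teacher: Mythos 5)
Your proposal is correct: it is Etemadi's truncation-and-subsequence argument, which is precisely the standard proof of the strong law under pairwise independence, and every step you outline (positive/negative part reduction, truncation at level $i$ with the first Borel--Cantelli application, additivity of variance from pairwise uncorrelatedness, the bound $\sum_i \mathrm{Var}(Y_i)/i^2 \le C\,\mathbb{E}X_1$, Chebyshev along $n_k=\lfloor\alpha^k\rfloor$ with a second Borel--Cantelli application, monotone interpolation, and $\alpha\downarrow 1$ along a countable sequence) is sound. Note, however, that the paper offers no proof to compare against: Theorem \ref{low_of_large_number} is stated in the appendix purely as a background result (it is Etemadi's strong law of large numbers, quoted there because only pairwise independence of the $\xi_{Gm}^{j}$-type variables is available in the proof of Theorem \ref{convergence_series_if_only_if}), so your write-up supplies in full what the paper merely cites, and your closing remark about why Kolmogorov's classical proof would not suffice is exactly the reason the pairwise-independent version is the right statement to quote.
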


The setting is to consider a random function $u$ given by the random series
\begin{align}\label{appendex_series}
u = \sum_{k\geq 0}\xi_{k} \psi_{k}
\end{align}
where $\{\xi_{k}\}_{k}$ is an i.i.d. sequence and the $\psi_{k}$ are real- or complex-valued H\"{o}lder functions on founded
open $D \subset \mathbb{R}^{n}$ satisfying, for some $\alpha \in (0,1]$,
\begin{align}\label{appendex_holder}
|\psi_{k}(x) - \psi_{k}(y)| \leq h(\alpha, \psi_{k}) |x-y|^{\alpha} \quad x,y \in D;
\end{align}
of course if $\alpha = 1$ the functions are Lipschitz.
\begin{theorem}\label{kolmogorov_test}\cite{Dashti_Stuart}
Let $\{\xi_{k}\}_{k\geq 0}$ be countably many centered i.i.d. random variables with bounded moments of all orders. Moreover
let $\{\psi_{k}\}_{k\geq 0}$ satisfy (\ref{appendex_holder}). Suppose there is some $\delta \in (0,2)$ such that
\begin{align}\label{appendix_s1}
S_{1}:= \sum_{k\geq 0}\|\psi_{k}\|_{L^{\infty}}^{2} < \infty,
\end{align}
and
\begin{align}\label{appendex_s2}
S_{2} := \sum_{k \geq 0}\|\psi_{k}\|_{L^{\infty}}^{2-\delta} h(\alpha,\psi_{k})^{\delta} < \infty.
\end{align}
Then $u$ defined by (\ref{appendex_series}) is almost surely finite for every $x\in D$, and $u$ is H\"{o}lder
continuous for every H\"{o}lder exponent smaller than $\alpha \delta /2$.
\end{theorem}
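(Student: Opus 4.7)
The plan is a Kolmogorov continuity argument, where the Hölder modulus of the limit is obtained by interpolating the two estimates on the basis functions $\psi_k$ and exploiting the boundedness of all moments of the $\xi_k$. First I would verify pointwise that the series in (\ref{appendex_series}) converges. Since the $\xi_k$ are centered and i.i.d.\ with finite second moment, the partial sums form an $L^2$ martingale with
\begin{equation*}
\mathbb{E}\Bigl|\sum_{k=0}^{N} \xi_k \psi_k(x)\Bigr|^2 \;=\; \mathbb{E}|\xi_1|^2 \sum_{k=0}^{N} |\psi_k(x)|^2 \;\leq\; \mathbb{E}|\xi_1|^2 \, S_1,
\end{equation*}
so $u(x)$ exists a.s.\ and in $L^2(\Omega)$ by $S_1 < \infty$ and the martingale convergence theorem.

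Next, for fixed $x,y \in D$, I want to estimate $\mathbb{E}|u(x)-u(y)|^{2m}$ for each positive integer $m$. Because the $\xi_k$ are i.i.d.\ centered with bounded moments of every order, I would invoke the Rosenthal (Marcinkiewicz--Zygmund) inequality
\begin{equation*}
\mathbb{E}\Bigl|\sum_{k} \xi_k a_k\Bigr|^{2m} \;\leq\; C_m \Bigl[\bigl(\sum_k a_k^2\bigr)^m + \sum_k a_k^{2m}\Bigr]
\end{equation*}
applied to $a_k = \psi_k(x)-\psi_k(y)$. Then I interpolate between the trivial bound $|\psi_k(x)-\psi_k(y)|\leq 2\|\psi_k\|_{L^\infty}$ and the Hölder bound (\ref{appendex_holder}) using the exponent $\delta \in (0,2)$ from the hypothesis:
\begin{equation*}
|\psi_k(x)-\psi_k(y)|^2 \;\leq\; 2^{2-\delta}\,\|\psi_k\|_{L^\infty}^{2-\delta}\, h(\alpha,\psi_k)^{\delta}\,|x-y|^{\alpha\delta}.
\end{equation*}
Summing over $k$ and using $S_2 < \infty$ gives $\sum_k |\psi_k(x)-\psi_k(y)|^2 \leq C\,S_2\,|x-y|^{\alpha\delta}$; the $\sum_k a_k^{2m}$ term is controlled by the same interpolation (with a larger exponent, hence faster spatial decay) since $a_k^{2m} \le (2\|\psi_k\|_{L^\infty})^{2m-\delta} h(\alpha,\psi_k)^\delta |x-y|^{\alpha\delta}$ and $\|\psi_k\|_{L^\infty}^{2m-\delta}\le \|\psi_k\|_{L^\infty}^{2-\delta}\cdot(\sup_k\|\psi_k\|_{L^\infty})^{2m-2}$ (the supremum is finite by $S_1<\infty$). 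Putting everything together I would obtain a bound of the form
\begin{equation*}
\mathbb{E}|u(x)-u(y)|^{2m} \;\leq\; C_m\,|x-y|^{\alpha\delta\, m}.
\end{equation*}

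Finally, I would apply the classical Kolmogorov continuity theorem on the bounded open set $D \subset \mathbb{R}^n$: the bound above produces a modification of $u$ that is Hölder continuous with any exponent strictly smaller than $(\alpha\delta\, m - n)/(2m)$. Sending $m \to \infty$ yields Hölder continuity for every exponent $\beta < \alpha\delta/2$, as claimed. The main obstacle I anticipate is the moment inequality: unlike the Gaussian case, where Wick-type identities give the sharp bound directly, one must carefully invoke a Rosenthal/Marcinkiewicz--Zygmund inequality and control both the ``Gaussian'' term $(\sum_k a_k^2)^m$ and the ``Poisson'' term $\sum_k a_k^{2m}$ with the same interpolation device; the second is secondary provided $\sup_k\|\psi_k\|_{L^\infty}$ is finite, which is guaranteed by $S_1 < \infty$.
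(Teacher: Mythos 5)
The paper itself offers no proof of this statement: it is imported verbatim from \cite{Dashti_Stuart} in the appendix, so the only comparison available is with the standard argument in that reference, and your outline follows it exactly — a.s.\ pointwise convergence from the $L^{2}$-bounded martingale of partial sums (using $S_{1}<\infty$), $2m$-th moment bounds for $u(x)-u(y)$ via independence and bounded moments of the $\xi_{k}$, the interpolation $|a_{k}|^{2}\leq (2\|\psi_{k}\|_{L^{\infty}})^{2-\delta}\bigl(h(\alpha,\psi_{k})|x-y|^{\alpha}\bigr)^{\delta}$ for $a_{k}=\psi_{k}(x)-\psi_{k}(y)$, and the Kolmogorov continuity criterion with $m\to\infty$ to reach every exponent below $\alpha\delta/2$.

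There is, however, a concrete slip in your handling of the diagonal (``Poisson'') term of the Rosenthal bound. The inequality you state, $a_{k}^{2m}\leq (2\|\psi_{k}\|_{L^{\infty}})^{2m-\delta}h(\alpha,\psi_{k})^{\delta}|x-y|^{\alpha\delta}$, extracts the factor $|x-y|^{\alpha\delta}$ only once, so after summing it gives $\sum_{k}a_{k}^{2m}\leq C_{m}|x-y|^{\alpha\delta}$, not $C_{m}|x-y|^{\alpha\delta m}$; your parenthetical ``faster spatial decay'' is the opposite of what this bound delivers, since $|x-y|^{\alpha\delta}\gg |x-y|^{\alpha\delta m}$ for small $|x-y|$. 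With the estimate as written, the best moment bound is $\mathbb{E}|u(x)-u(y)|^{2m}\leq C_{m}\bigl(|x-y|^{\alpha\delta m}+|x-y|^{\alpha\delta}\bigr)$, and the Kolmogorov test then caps the H\"{o}lder exponent at $(\alpha\delta-n)/(2m)$, which tends to $0$ instead of $\alpha\delta/2$ (and is vacuous if $\alpha\delta\leq n$); so your final display does not follow from your estimates. The repair is one line: either use $\sum_{k}a_{k}^{2m}\leq \bigl(\sum_{k}a_{k}^{2}\bigr)^{m}$ (the $\ell^{2}\hookrightarrow\ell^{2m}$ inequality), which absorbs the diagonal term into the ``Gaussian'' one, or interpolate with exponent $m\delta$, i.e.\ $a_{k}^{2m}\leq (2\|\psi_{k}\|_{L^{\infty}})^{(2-\delta)m}\bigl(h(\alpha,\psi_{k})|x-y|^{\alpha}\bigr)^{\delta m}$ together with $\sum_{k}\bigl(\|\psi_{k}\|_{L^{\infty}}^{2-\delta}h(\alpha,\psi_{k})^{\delta}\bigr)^{m}\leq \bigl(\sup_{k}\|\psi_{k}\|_{L^{\infty}}^{2-\delta}h(\alpha,\psi_{k})^{\delta}\bigr)^{m-1}S_{2}<\infty$. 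With either fix, $\mathbb{E}|u(x)-u(y)|^{2m}\leq C_{m}|x-y|^{\alpha\delta m}$ holds and the rest of your argument goes through, with the usual caveat that the Kolmogorov criterion yields a H\"{o}lder continuous modification of $u$, which is the sense in which the conclusion is to be read.
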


\section{Acknowledgements}
J. Jia would like to thank financial support by Beijing Center for Mathematics and Information Interdisciplinary Science (BCMIIS)
J. Jia was supported by the National Natural Science Foundation of China under the grants no. 11131006, 41390450, and 91330204
and partially by the National Basic Research Program of China under the grant no. 2013CB329404.

\end{document}